\documentclass[12pt,reqno]{amsart}

\usepackage[all,cmtip]{xy}
\usepackage{tikz-cd}
\usepackage{enumitem}
\usepackage{parskip}
\usepackage{nicematrix}
\usepackage{amsmath}
\usepackage{amssymb}
\usepackage{mathtools}
\usepackage{hyperref}
\usepackage[capitalise]{cleveref}
\numberwithin{equation}{section}

\newtheorem{theorem}{Theorem}[section]
\newtheorem{corollary}[theorem]{Corollary}
\newtheorem{lemma}[theorem]{Lemma}

\theoremstyle{definition}

\newtheorem{example}[theorem]{Example}

\newcommand{\GmodX}[3]{#1 \backslash\mkern-6mu\backslash #2^{ss}_{#1}(#3)}

\begin{document}	
	\title{GIT quotient of minimal dimensional Schubert variety modulo a subtorus }	
	\author{Arkadev Ghosh }
		\address{Chennai Mathematical Institute, Plot H1, SIPCOT IT Park, Siruseri, Kelambakkam, 603103, India.}
		\email{arkadev@cmi.ac.in}	
			\author{S. S. Kannan}
		\address{Chennai Mathematical Institute, Plot H1, SIPCOT IT Park, Siruseri, Kelambakkam, 603103, India.}	
		\email{kannan@cmi.ac.in}	
		\subjclass[2010]{14M15, 14L35, 14F25}	
		\keywords{Semistable points, GIT quotient, Schubert variety}				
	\maketitle 		

\begin{abstract}
Let $G=PSL(n,\mathbb{C})$. Let $T$ be a maximal torus of $G$. Let $\omega_{r}$ denote the $r^{th}$ fundamental weight.  Let $\mathcal{L}(n\omega_{r})$ denote the line bundle on the Grassmannian $G_{r,n}$ associated to the character $n\omega_{r}$ of $T$.
In \cite{sskannan}, it is proved that there is a unique minimal dimensional Schubert variety $X(w_{r,n})$ in $G_{r,n}$ admitting semistable points for the $T$-linearized ample line bundle $\mathcal{L}(n\omega_{r})$. Assume that $n=rq+1$, where $r,q\in\mathbb{N}$ and $q\geq 2$. In this paper, we study the GIT quotient of $X(w_{r,n})$ modulo a subtorus $T_{J_{r}}$ of $T$ generated by the one parameter subgroups of $T$ corresponding to the peaks of $w_{r,n}$ (see \cite[Definition 4.6]{Perrin}). We prove that the GIT quotient of $X(w_{r,n})$ modulo $T_{J_{r}}$ is isomorphic to the total space of the $r^{th}$ stage of an iterated projective space bundle over $\mathbb{P}^{q-1}$.
\end{abstract}		
\section{introduction} Let $r,q\in\mathbb{N}$ with $q\geq 2$ and $n=rq+1$. Let $G=PSL(n,\mathbb{C})$.
Let $T$ be a maximal torus of $G$ and $B$ be a Borel subgroup of $G$ containing $T$. Let $W=N_{G}(T)/T$ denote the Weyl group of $G$ with respect to $T$. The set of roots of $G$ with respect to $T$ will be denoted by $R$. Let $R^{+}\subseteq R$ be the set of positive roots with respect to $(B,T)$. Let $S=\{\alpha_{1},\ldots,\alpha_{n-1}\}\subseteq R^{+}$ be the set of simple roots of $G$ with respect to $B$. The simple reflection in $W$ corresponding to $\alpha_{i}$ is denoted by $s_{i}$. Let $\{\omega_{r}:1\leq r\leq n-1\}$ denote the fundamental weights associated to $S$. Let $\{\lambda_{1},\ldots,\lambda_{n-1}\}$ denote the set of one parameter subgroups of $T$ dual to $S$. Let $P_{S\setminus\{\alpha_{r}\}}$ be the standard maximal parabolic subgroup of $G$ corresponding to $S\setminus\{\alpha_{r}\}$. Then the Grassmannian $G_{r,n}$ is isomorphic to $G/P_{S\setminus\{\alpha_{r}\}}$.

Consider the action of $T$ on the Grassmannain $G_{r,n}$. In \cite{HK}, Hausmann and Knutson identified the GIT quotient of Grassmannian $G_{2,n}$ by $T$ with the moduli space of polygons in $\mathbb{R}^{3}$. They  showed that GIT quotient of $G_{2,n}$ by $T$ can be realized as the GIT quotient of the $n$-fold product of projective lines by the diagonal action of $PSL(2,\mathbb{C})$. More generally, using Gel'fand-Macpherson correspondence GIT quotient of $G_{r,n}$ by $T$ can be identified with the GIT quotient of $(\mathbb{P}^{r-1})^{n}$ by the diagonal action of $PSL(r,\mathbb{C})$.

In \cite{Sko}, Skorobogatov gave a combinatorial description of the set of semistable points in $G_{r,n}$ for the $T$-linearized line bundle $\mathcal{L}(n\omega_{r})$. As a corollary, he showed that when $r$ and $n$ are coprime, semistability is same as stability. In \cite{TorusquotientI} and \cite{TorusquotientII}, second named author characterized the parabolic subgroups $Q$ of a simple algebraic group $H$ containing a fixed Borel subgroup, for which there exists an ample line bundle $\mathcal{L}$ on $H/Q$
such that $(H/Q)^{ss}_{T}(\mathcal{L})=(H/Q)^{s}_{T}(\mathcal{L})$. In particular, when $H=PSL(n,\mathbb{C})$ and $Q=P_{S\setminus\{\alpha_{r}\}}$, he showed that $(G_{r,n})^{ss}_{T}(\mathcal{L}(n\omega_{r}))=(G_{r,n})^{s}_{T}(\mathcal{L}(n\omega_{r}))$ if and only if $r$ and $n$ are coprime.

 In \cite{sskannan} second named author and Sardar proved that there is a unique minimal dimensional Schubert variety $X(w_{r,n})$ in $G_{r,n}$ admitting semistable points for the $T$-linearized line bundle $\mathcal{L}(n\omega_{r})$ and gave a combinatorial description of $w_{r,n}$. In \cite{KP}, Kannan and Pattanayak extended the results to the simple algebraic groups of type $B$, $C$ and $D$. 
 In \cite{KB}, S.Bakshi, S.S.Kannan and K.V. Subrahmanyam proved that $\GmodX{T}{X(w_{r,n})}{\mathcal{L}(n\omega_{r})}$ is smooth and  $\GmodX{T}{X(w_{3,7})}{\mathcal{L}(7\omega_{3})}$ is a rational normal scroll.
 
  Using that description of $w_{r,n}$ in \cite{sskannan}, we have a reduced expression 
\begin{align*}
 w_{r,n}=(s_{q}\cdots s_{1})(s_{2q}\cdots s_{2})\cdots(s_{rq}\cdots s_{r}).
\end{align*} Note that $\{\alpha_{iq}:1\leq i\leq r\}$ are the peaks of $w_{r,n}$(see \cite[Definition 4.6]{Perrin}). Let $T_{J_{r}}$ be the subgroup of $T$ generated by $\lambda_{iq}(\mathbb{G}_{m})$ ($1\leq i\leq r$). 

We observe that $X(w_{r,n})$ is also the minimal dimensional Schubert variety in $G_{r,n}$ admitting semistable points for the $T_{J_{r}}$-linearized line bundle $\mathcal{L}(n\omega_{r})$ (see \cref{sction:combinatorial resul:2}). So, it is interesting to study the GIT quotient $\GmodX{T_{J_{r}}}{X(w_{r,n})}{\mathcal{L}(n\omega_{r})}$. In this direction, we prove the following results.

Let $Y_{r}:=\GmodX{T_{J_{r}}}{X(w_{r,n})}{\mathcal{L}(n\omega_{r})}$. 
For $1\leq k\leq r-1$,  let $X(w_{k,kq+1})$ denote the unique minimal dimensional Schubert variety in $G_{k,kq+1}$ admitting semistable points for the $T_{k}$-linearized line bundle $\mathcal{L}((kq+1)\omega_{k(k)})$  (see \cref{section 6.1} for notations). Define\begin{align*}
	Y_{k}:=\GmodX{T_{J_{k}}}{X(w_{k,kq+1})}{\mathcal{L}((kq+1)\omega_{k(k)})}.
\end{align*}
\begin{theorem}$($See \cref{main result 1} $)$ We have a morphism $\phi_{r}:Y_{r}\longrightarrow Y_{r-1}$ such that 
$Y_{r}$ is a $\mathbb{P}^{r(q-1)}$-bundle over $Y_{r-1}$.
\end{theorem}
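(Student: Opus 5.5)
The plan is to realize $\phi_r$ through the incidence description of $X(w_{r,n})$ and then to take the quotient in two stages. Since $w_{r,n}=(s_q\cdots s_1)\cdots(s_{rq}\cdots s_r)$ sends $\{1,\ldots,r\}$ to $\{q+1,2q+1,\ldots,rq+1\}$, we have
\begin{align*}
X(w_{r,n})=\{V\in G_{r,n}:\ \dim (V\cap E_{iq+1})\geq i \text{ for } 1\leq i\leq r\},
\end{align*}
where $E_j=\langle e_1,\ldots,e_j\rangle$ is the standard flag. On the open subset $X^{\circ}$ where $\dim(V\cap E_{(r-1)q+1})=r-1$, the assignment $V\mapsto U:=V\cap E_{(r-1)q+1}$ gives a morphism $\pi:X^{\circ}\to X(w_{r-1,(r-1)q+1})\subseteq G_{r-1,(r-1)q+1}$. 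Its fibre over $U$ is an open subset of $\mathbb{P}(\mathbb{C}^{n}/U)\cong\mathbb{P}^{rq-r+1}$. As a check, the dimensions agree: $\ell(w_{r,n})-r=(q-1)r(r+1)/2$, and this exceeds $\dim Y_{r-1}$ by exactly $r(q-1)$.

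\textbf{Step 1: semistable points lie in $X^{\circ}$.} I would show that every $T_{J_r}$-semistable point of $X(w_{r,n})$ for $\mathcal{L}(n\omega_r)$ lies in $X^{\circ}$, and that $\pi$ sends it to a $T_{J_{r-1}}$-semistable point for $\mathcal{L}((r-1)q+1)\omega_{r-1})$. Here $T_{J_{r-1}}$ is identified with the subtorus generated by $\lambda_{iq}$ for $i\leq r-1$, as in \cref{section 6.1}. I would use the Hilbert--Mumford criterion together with the Plücker coordinates $p_I$, $I\leq w_{r,n}$:
\begin{itemize}[leftmargin=*]
\item A point is semistable if and only if some invariant standard monomial is nonzero on it, and the weight of $p_I$ under $\lambda_{iq}$ is governed by $|I\cap\{1,\ldots,iq\}|$.
\item Semistability for $\lambda_{(r-1)q}$ and $\lambda_{rq}$ then forces some nonzero $p_I$ with $I\ni rq+1$ and $|I\cap\{1,\ldots,(r-1)q+1\}|=r-1$.
\item For such $I$, the coordinate $p_I$ factors as a Plücker coordinate of $U$ times the $e_{rq+1}$-component of $V$ modulo $U$.
\end{itemize}
This factorization is what transfers semistability down to $U$.

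\textbf{Step 2: quotient of the fibres.} Next I would quotient out by the last one-parameter subgroup $\lambda_{rq}$. On the fibre $\mathbb{P}(\mathbb{C}^n/U)$ it scales the $e_{rq+1}$-coordinate against the remaining coordinates $E_{rq}/U$. Semistability forces both of the following:
\begin{itemize}[leftmargin=*]
\item the $e_{rq+1}$-coordinate $t$ is nonzero;
\item the projection $\bar v$ to $E_{rq}/U$ is nonzero.
\end{itemize}
Hence the quotient of the fibre is $\{[\bar v:t]:t\neq0,\bar v\neq 0\}/\mathbb{G}_m\cong\mathbb{P}(E_{rq}/U)\cong\mathbb{P}^{r(q-1)}$.

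\textbf{Step 3: globalizing.} I would then argue globally. Let $\mathcal{U}$ be the tautological bundle on the semistable locus of $X(w_{r-1,(r-1)q+1})$. The quotient bundle $E_{rq}\otimes\mathcal{O}/\mathcal{U}$ is $T_{J_{r-1}}$-equivariant of rank $r(q-1)+1$. The GIT quotient is taken in stages: first by $\lambda_{rq}$ fibrewise, giving the projective bundle $\mathbb{P}(E_{rq}\otimes\mathcal{O}/\mathcal{U})$ over the semistable locus, and then by $T_{J_{r-1}}$. Since $r$ and $q$ give stable $=$ semistable (cf.\ \cite{Sko}, \cite{TorusquotientII}) with free action, Kempf's descent lemma applies. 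After twisting by a suitable character so that the stabilizers act trivially on the fibres, the bundle descends to a vector bundle on $Y_{r-1}$. Its projectivization is then $Y_r$, and $\phi_r$ is the induced map.

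The main obstacle I expect is Step 1: proving that semistability genuinely forces $\dim(V\cap E_{(r-1)q+1})=r-1$, and matching the linearizations $\mathcal{L}(n\omega_r)$ and $\mathcal{L}((r-1)q+1)\omega_{r-1})$ so that the semistable loci correspond exactly. I would attack it through the weight inequalities $\langle \mu,\lambda_{iq}\rangle$ for the $T$-weights of the nonvanishing Plücker coordinates, using the combinatorial description of $w_{r,n}$ from \cite{sskannan}.
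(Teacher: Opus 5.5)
Your architecture is sound and differs genuinely from the paper's. Your map $\pi(V)=V\cap E_{(r-1)q+1}$ is the paper's truncation $u\mapsto u[r-1]$. Quotienting in stages and descending $E_{rq}\otimes\mathcal{O}/\mathcal{U}$ by Kempf's lemma would replace the paper's chart-by-chart work: the invariant rescalings $b(J(p-1))$ on the cover $U(J)$, bijectivity of $h_J$, and Zariski's main theorem. It would also give $\mathcal{E}$ intrinsically.

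\textbf{The gap.} As written, you never show that $V_r$ is the \emph{whole} punctured bundle $(E_{rq}\otimes\mathcal{O}/\mathcal{U})|_{V_{r-1}}\setminus 0$. Everything you argue runs in the direction ``semistable $\Rightarrow\ldots$''. In Step 2 you show semistability forces $t\neq 0$ and $\bar v\neq 0$, and then conclude that the fibre quotient is all of $\mathbb{P}(E_{rq}/U)$. That conclusion needs the converse: every $V$ with $U\in V_{r-1}$ and $\bar v\neq 0$ must be $T_{J_r}$-semistable. Without it, each fibre of $Y_r$ is only an open subset of $\mathbb{P}^{r(q-1)}$. This converse is where the paper does real work, namely the $(\Leftarrow)$ half of \cref{semistability criterion}, built from the invariant monomials $M_r$, $M'_j$ times $p_w$.

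Likewise, the factorization $p_I(V)=\pm p_{I\setminus\{rq+1\}}(U)$ for $I\ni rq+1$ does not by itself ``transfer semistability down to $U$''. Take $\tilde\lambda=\lambda+c\lambda_{rq}$; to make those coordinates dominate the Hilbert--Mumford weight you need $|c|\gg 0$. The weight then acquires a shift proportional to $c$, of exactly the sign that makes the inequality for $V$ vacuous for $U$. In the paper this step also goes through the explicit criterion. Semistability of $V$ forces every column of $u$ to have a nonzero entry, which is $(\Rightarrow)$ at level $r$. That in turn makes $U$ semistable, which is $(\Leftarrow)$ at level $r-1$.

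\textbf{Two smaller points.} First, what you call the main obstacle is immediate. By minimality of $X(w_{r,n})$ for $T_{J_r}$ (\cref{sction:combinatorial resul:2}), semistable points lie in $C(w_{r,n})$. There $\dim(V\cap E_{(r-1)q+1})=r-1$ and $V\not\subseteq E_{rq}$ hold automatically. Second, the works you cite for stable $=$ semistable concern the full torus $T$ on $G_{r,n}$. They give neither closed orbits nor trivial stabilizers for $T_{J_{r-1}}$ on $X(w_{r-1})$. What you need is \cref{semistability=stability}. With it, $V_{r-1}\to Y_{r-1}$ is a principal bundle and descent needs no twist.

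\textbf{A repair inside your framework.} You can close the gap without constructing invariant sections, by induction on $r$:
\begin{enumerate}
\item Let $\Omega_k\subseteq C(w_k)$ be the locus where every column has a nonzero entry. The destabilizing one-parameter subgroups in the paper's $(\Rightarrow)$ direction give $V_k\subseteq\Omega_k$.
\item Assume $V_{r-1}=\Omega_{r-1}$. Then $\Omega_r$ is exactly your punctured bundle over $V_{r-1}$.
\item $T_{J_r}$ acts freely on $\Omega_r$ with quotient $\mathbb{P}(\mathcal{E})$. So $Y_r$ is identified with the open image of $V_r$ in $\mathbb{P}(\mathcal{E})$.
\item Since $Y_r$ is projective, this image is also closed. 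Hence it is all of the irreducible $\mathbb{P}(\mathcal{E})$, which also gives $V_r=\Omega_r$.
\end{enumerate}
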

Since $Y_{r-1}$ is smooth (see \cref{semistability=stability}), $Y_{r}$ is isomorphic to a projective space bundle $\mathbb{P}(\mathcal{E})$ of a vector bundle $\mathcal{E}$ on $Y_{r-1}$(see \cite[Ch 2, Exercise 7.10(c)]{Ha}). We identify the vector bundle $\mathcal{E}$(up to a twist by line bundle). We find a trivializing open cover of $Y_{r-1}$ for $\mathcal{E}$ such that the images of the transition functions lie inside the maximal torus of $GL(r(q-1)+1,\mathbb{C})$ consisting of diagonal matrices. From this we derive that the vector bundle $\mathcal{E}$ splits into direct sum of line bundles. More precisely, we have the following.
\begin{theorem} $($See \cref{section:projective bundle:th 1}$)$
	We have line bundles $\mathcal{L}_{0},\ldots,\mathcal{L}_{r-1}$ on $Y_{r-1}$ and an isomorphism $\Psi:Y_{r}\longrightarrow \mathbb{P}(\mathcal{E})$ over $Y_{r-1}$, where $\mathcal{E}=\mathcal{L}_{0}^{\oplus q}\oplus(\bigoplus_{j=1}^{r-1} \mathcal{L}_{j}^{\oplus q-1})$.
\end{theorem}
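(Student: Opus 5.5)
Our plan is to make the structure of $Y_r$ explicit in Plücker coordinates and then read off the transition functions of the bundle from the first theorem. The Schubert variety $X(w_{r,n})$ is cut out by the vanishing of all Plücker coordinates $p_{\underline{i}}$ with $\underline{i}\not\leq w_{r,n}$. From the reduced expression
\[
w_{r,n}=(s_{q}\cdots s_{1})(s_{2q}\cdots s_{2})\cdots(s_{rq}\cdots s_{r}),
\]
the surviving indices are $\underline{i}=(i_1<\cdots<i_r)$ with $i_j\leq jq+1$. The torus $T_{J_r}$ acts on these coordinates through the weights $\langle n\omega_r,\lambda_{jq}\rangle$-twisted by the positions of the $i_j$ relative to the peaks $jq$. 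The $T_{J_r}$-invariant sections of $\mathcal{L}(n\omega_r)^{\otimes m}$ are therefore monomials in the $p_{\underline{i}}$ whose multiset of indices is balanced across the $r+1$ blocks
\[
\{1,\ldots,q\},\ \{q+1,\ldots,2q\},\ \ldots,\ \{rq+1\}.
\]
Concretely, each block $[(j-1)q+1,jq]$ is hit exactly $n/\ldots$ times in the appropriate proportion.

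I would first recall from the proof of the first theorem how $\phi_r$ is defined: restricting to coordinates not involving the last block, i.e.\ forgetting the last row, lands in $X(w_{r-1,(r-1)q+1})$. The fibre over a point is then the projective space of the $r(q-1)+1$ remaining homogeneous coordinates. I expect these to be indexed as follows.
\begin{itemize}
\item The $q$ choices of the new entry in the block $\{(r-1)q+1,\ldots,rq\}$, together with the entry $rq+1$; these should give the $q$ copies attached to $\mathcal{L}_0$.
\item For each $j=1,\ldots,r-1$, the $q-1$ choices obtained by exchanging an entry in block $j$. These give the $q-1$ copies attached to $\mathcal{L}_j$.
\end{itemize}

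The key step is to choose the trivializing cover of $Y_{r-1}$. I would take the open sets $U_{\underline{i}}$ on which a fixed $T_{J_{r-1}}$-invariant monomial $f_{\underline{i}}$ in the Plücker coordinates is nonzero. Over $U_{\underline{i}}$, the fibre coordinates of $Y_r$ become the ratios $p_{\underline{i}'}/p_{\underline{i}}$, made $T_{J_r}$-invariant by multiplying by appropriate invariant rational functions pulled back from $Y_{r-1}$. On overlaps $U_{\underline{i}}\cap U_{\underline{i}''}$, each fibre coordinate in the $j$-th group gets rescaled by one and the same function $g^{(j)}_{\underline{i}\,\underline{i}''}$, which depends only on $j$. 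So the transition functions are diagonal matrices with $q$ equal entries $g^{(0)}$ and, for each $j\geq 1$, $q-1$ equal entries $g^{(j)}$.

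The cocycle $\{g^{(j)}\}$ defines a line bundle $\mathcal{L}_j$ on $Y_{r-1}$. Equivalently, $\mathcal{L}_j$ is the descent of a $T_{J_{r-1}}$-linearized line bundle on $X(w_{r-1,(r-1)q+1})^{ss}$ given by an explicit character, and it is well defined after twisting by a common line bundle. Gluing the local isomorphisms $U_{\underline{i}}\times\mathbb{P}^{r(q-1)}\to\mathbb{P}(\mathcal{E})|_{U_{\underline{i}}}$ then yields $\Psi$.

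The main obstacle is making the fibre coordinates genuinely $T_{J_r}$-invariant and showing that the rescaling factors depend only on the group $j$, not on the individual coordinate. I would try to do this by a weight computation with the one-parameter subgroups $\lambda_{jq}$. Pairing the weight of $p_{\underline{i}'}/p_{\underline{i}}$ against $\lambda_{kq}$ should show that it depends only on which block was modified. This would let a single invariant ratio of the form (coordinate of $Y_{r-1}$)/(another coordinate) serve as a normalizing factor for the whole group. Smoothness of $Y_{r-1}$ (from \cref{semistability=stability}) together with the first theorem guarantees that the glued object is a $\mathbb{P}^{r(q-1)}$-bundle. Diagonality of the transition functions then gives the splitting $\mathcal{E}=\mathcal{L}_{0}^{\oplus q}\oplus\bigoplus_{j=1}^{r-1}\mathcal{L}_j^{\oplus q-1}$.
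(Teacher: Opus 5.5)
\textbf{There is a genuine gap.} Your architecture matches the paper's: trivialize over a cover of $Y_{r-1}$, rescale the fibre coordinates by functions pulled back from the base so that they become $T_{J_r}$-invariant, and check that the transition factors depend only on the group. However, the step you defer as ``the main obstacle'' is the actual content of the argument, and you do not supply it.

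On $C(w_r)$ the fibre coordinates are $a_{k,r}=X_{\beta_{k,r}}$, $k\in C_r$. Up to sign these are the Plücker coordinates $p_{(q+1,\ldots,(r-1)q+1,k)}$, normalized by $p_{w_r}=1$. The coordinate $a_{k,r}$ has weight $t_p\cdots t_r$ when $(k,r)\in J_{p,r}$. Showing that the weight depends only on the block is the easy part. What you need is, on each member of a cover of $Y_{r-1}$ and for each $p$, a \emph{nowhere-vanishing semi-invariant} of weight $t_1\cdots t_{p-1}$. Multiplying by invariant functions, as you write, changes nothing.

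A single base coordinate $a_{i,j}$ has weight $t_{d(i,j)}\cdots t_j$, which equals $t_1\cdots t_j$ only when $i\le q$. So on the locus $a_{1,j}=\cdots=a_{q,j}=0$ several coordinates must be combined. Your cover by nonvanishing of an invariant monomial $f_{\underline{i}}$ gives no control over this.

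The paper's missing idea has two parts:
\begin{enumerate}
\item Use the cover $U(J)$, $J=(i_1,\ldots,i_{r-1})$, defined by $a_{i_j,j}\neq 0$ for all $j$.
\item Build the chain $e^0=j$, $e^k=d(i_{e^{k-1}},e^{k-1})-1$. The product $b(J(j))=\prod_k X_{\gamma_{i_{e^k},e^k}}$ is nonzero on $\tilde{U}(J)$, and its weight telescopes to exactly $t_1\cdots t_j$ (Lemma~\ref{claim 4}).
\end{enumerate}
Hence all products $b(J(p-1))\,a_{k,r}$ have the common weight $t_1\cdots t_r$, and the cocycles are $b(J_1(p-1))/b(J_2(p-1))$, constant on each group.

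Your predicted grouping is also wrong, and this matters because each group must be weight-homogeneous.
\begin{itemize}
\item The entry $rq+1$ is not a fibre coordinate; it is the normalized pivot $a_{rq+1,r}=1$.
\item The last block contributes only the $q-1$ coordinates $a_{k,r}$ with $(r-1)q+2\le k\le rq$.
\item The block with $q$ coordinates is the first one, $\{1,\ldots,q\}$, which contains no pivot.
\end{itemize}
The correct matching is $J_{1,r}\leftrightarrow\mathcal{L}_0$ and $J_{j+1,r}\leftrightarrow\mathcal{L}_j$ for $1\le j\le r-1$. Already for $r=q=2$ your rule fails. It expects one coordinate from block $1$, where there are two ($a_{1,2},a_{2,2}$, both of weight $t_1t_2$). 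It expects two from the last block together with $rq+1$, where the only fibre coordinate is $a_{4,2}$, of weight $t_2$. Also, $\phi_r$ forgets the $r$-th column and the last $q$ rows, not ``the last row''. In Plücker terms it is $p'_{\underline{i}'}=p_{(\underline{i}',rq+1)}$.

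Your remark that $\mathcal{L}_j$ descends from a character could be turned into a proof that bypasses the chain construction:
\begin{enumerate}
\item $V_r\cong V_{r-1}\times(\mathbb{C}^{C_r}\setminus\{0\})$ equivariantly.
\item $\lambda_{rq}$ acts only by scalars on the second factor.
\item If you show that $V_{r-1}\to Y_{r-1}$ is a Zariski-locally trivial principal $T'_{J_{r-1}}$-bundle, then $Y_r\cong V_{r-1}\times^{T'_{J_{r-1}}}\mathbb{P}(\mathbb{C}^{C_r})$, the projectivization of a sum of associated line bundles.
\end{enumerate}
As written, however, this is only asserted, not carried out.
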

\begin{theorem}$($See \cref{section:projective bundle:cor 1}$)$
	For $1\leq k\leq r$, we have morphism $\phi_{k}:Y_{k}\longrightarrow Y_{k-1}$ such that 
	\begin{align*}
		Y_{r}\xrightarrow{\text{$\phi_{r}$}}Y_{r-1}\xrightarrow{\text{$\phi_{r-1}$}}\cdots Y_{2}\xrightarrow{\text{$\phi_{2}$}}Y_{1}\xrightarrow{\text{$\phi_{1}$}}Y_{0}=\{pt\}
	\end{align*}
	is a generalized Bott tower with stage $r$ (see \cref{subsection 2.2} for the Definition of generalized Bott tower).
	\end{theorem}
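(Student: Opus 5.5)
The plan is to obtain the third theorem as an inductive consequence of the first two. I assume that the definition of a generalized Bott tower in \cref{subsection 2.2} is the usual one. It is a sequence of maps $B_k\to B_{k-1}\to\cdots\to B_0=\{pt\}$ in which each $B_k=\mathbb{P}(\mathcal{E}_k)$ is the projectivization of a direct sum of line bundles on $B_{k-1}$.

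\textbf{Step 1: reduction to the theorems above.} For each $k$ with $1\leq k\leq r$, set $n_k=kq+1$ and apply \cref{main result 1} and \cref{section:projective bundle:th 1} with $r$ replaced by $k$. This gives a morphism $\phi_k:Y_k\to Y_{k-1}$ and an isomorphism $\Psi_k:Y_k\to\mathbb{P}(\mathcal{E}_k)$ over $Y_{k-1}$, where
\begin{align*}
\mathcal{E}_k=\mathcal{L}_{0}^{(k)\oplus q}\oplus\Big(\bigoplus_{j=1}^{k-1}\mathcal{L}_{j}^{(k)\oplus q-1}\Big)
\end{align*}
is a sum of $k(q-1)+1$ line bundles on $Y_{k-1}$. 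This is legitimate because those theorems are stated for arbitrary $r\in\mathbb{N}$ and $q\geq2$. The notation of \cref{section 6.1} is set up precisely so that $Y_k$ is the analogue of $Y_r$ for the pair $(k,kq+1)$.

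\textbf{Step 2: the bottom of the tower.} For $k=1$ we have $n=q+1$ and $w_{1,q+1}=s_q\cdots s_1$, so $X(w_{1,q+1})=\mathbb{P}^q$. The torus $T_{J_1}=\lambda_q(\mathbb{G}_m)$ is one-dimensional and acts with weights $0$ on $q$ of the coordinates and a nonzero weight on the last coordinate. The quotient is therefore $\mathbb{P}^{q-1}=\mathbb{P}(\mathcal{O}^{\oplus q})$ over a point, and $\phi_1$ is the structure map to $Y_0=\{pt\}$. This is consistent with the formula of Step 1 for $k=1$, where the sum over $j$ is empty.

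\textbf{Step 3: compatibility of the stages.} The theorems are applied to each $k$ independently. The point to check is that the $Y_{k-1}$ appearing as the base of $\phi_k$ is the same variety as the total space $Y_{k-1}$ of the previous stage. This holds by the definition of $Y_k$, provided that the identification of the ``$r-1$'' level used in \cref{main result 1}, applied with $r=k$, coincides with the definition of $Y_{k-1}$ via $T_{k-1}$, $J_{k-1}$ and $\omega_{k-1(k-1)}$. With this matching in place, the tower $Y_r\to\cdots\to Y_0$ satisfies the definition stage by stage, and this proves the theorem.

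\textbf{Main obstacle.} The main obstacle is the bookkeeping in Step 3. One has to make sure that the notation of \cref{section 6.1} for the lower groups $G_{k,kq+1}$, their tori $T_k$ and the weights $\omega_{k(k)}$ is exactly what the main theorems produce at level $k$. If some definition of a generalized Bott tower requires the $B_k$ to be smooth projective, or requires the first line bundle to be trivial, I would check this as well. Smoothness follows from \cref{semistability=stability}, and a twist by $\mathcal{L}_0^{(k)-1}$ normalizes $\mathcal{L}_0^{(k)}$ to $\mathcal{O}$ without changing $\mathbb{P}(\mathcal{E}_k)$.
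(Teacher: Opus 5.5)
Your proposal is correct and follows the paper's proof: the base case $Y_1\cong\mathbb{P}^{q-1}$, then \cref{section:projective bundle:th 1} applied at each level $2\leq k\leq r$, then the definition in \cref{subsection 2.2}; no twist is needed, because the paper's $\mathcal{L}_{0}$ is already trivial ($b_{J_1,J_2}(0)\equiv 1$). The one difference is in the base case: the paper cites \cite[Theorem 7.1]{GKgit}, while you compute directly, and the weights alone do not fix the quotient since it depends on the linearization — so you should add that, by \cref{semistability criterion} with $r=1$, the semistable locus is $\{[x_1:\cdots:x_q:1] : (x_1,\ldots,x_q)\neq 0\}$, on which $\lambda_q$ scales $(x_1,\ldots,x_q)$ freely.
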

\subsection{Organization}The organization of the article is as follows. In section [2], we recall some notations and preliminaries on algebraic groups, geometric invariant theory, projective bundle and generalized Bott towers. In section [3], we prove that $X(w_{r,n})$ is the unique minimal dimensional Schubert variety admitting semistable points in $G_{r,n}$ for the $T_{J_{r}}$-linearized line bundle $\mathcal{L}(n\omega_{r})$. In section [4], we find a description of the semistable locus $X(w_{r,n})^{ss}_{T_{J_{r}}}(\mathcal{L}(n\omega_{r}))$. In section [5], we prove that  $Y_{r}$ is a $\mathbb{P}^{r(q-1)}$-bundle over $Y_{r-1}$. In section [6], we prove that $Y_{r}$ is isomorphic to the total space of the $r^{th}$ stage of an iterated projective space bundles over the projective space $\mathbb{P}^{q-1}$.
\section{notations and preliminaries}
In this section, we set up some notations and preliminaries. Let $G$, $B$, $T$, $R$, $S$ and $W$ be as in the introduction. We refer to \cite{Hum1},\cite{Hum2}, and \cite{Jan} for preliminaries and notation for algebraic groups and Lie algebras. We refer to \cite{GIT}, and \cite{NEWSTEAD} for preliminaries and notations for Geometric invariant theory.\\
The $\mathfrak{g}$ be the Lie algebra of $G$. Let $\mathfrak{b}\subseteq\mathfrak{g}$ and $\mathfrak{h}\subseteq\mathfrak{b}$ be the Lie algebras of $B$ and $T$ respectively.\\
Let $\hat{G}=SL(n,\mathbb{C})$ and $\pi:\hat{G}\longrightarrow G$ be the simply connected covering of $G$. Let $\hat{T}=\pi^{-1}(T)$ and $\hat{B}=\pi^{-1}(B)$.

For a subset $I$ of $S$, we denote the parabolic subgroup of $G$ generated by $B$ and $\{n_{\alpha}:\alpha\in I\}$ by $P_{I}$, where $n_{\alpha}$ is a representative of $s_{\alpha}$ in $N_{G}(T)$. Note that every standard parabolic subgroup of $G$ containing $B$ are of the form $P_{I}$ for some $I\subseteq S$. Let $W_{I}$ be the subgroup of $W$ generated by $\{s_{\alpha}:\alpha\in I\}$. We note that $W_{I}$ is the Weyl group of $P_{I}$. For $I\subseteq S$, $W^{I}=\{w\in W:w(\alpha)\in R^{+} \text{ for all }\alpha\in I\}$ is the set of minimal length coset representatives of the elements of $W/W_{I}$. Further, there is a natural order on $W^{I}$, namely the restriction of the Bruhat order on $W$. For $w\in W$, we define $R^{+}(w^{-1})=\{\beta\in R^{+}:w^{-1}(\beta)\in R^{-}\}$.\\
Let $I(r,n)=\{(a_{1},a_{2},\ldots,a_{r})\in\mathbb{N}^{r}:1\leq a_{1}<a_{2}<\cdots <a_{r}\leq n\}$. There is a natural order on $I(r,n)$, given by $(a_{1},a_{2},\ldots,a_{r})\leq (b_{1},b_{2},\ldots,b_{r})$ if and only if $a_{i}\leq b_{i}$ for all $1\leq i\leq r$. There is an order preserving identification of $W^{S\setminus\{\alpha_{r}\}}$ with $I(r,n)$, and the correspondence is given by $w\in W^{S\setminus\{\alpha_{r}\}}$ mapping to $(w(1),w(2),\ldots,w(r))$.\\
 For notations and results on standard monomial theory, we refer \cite{Seshadri}.
 
Let $X(T)$ (respectively, $Y(T))$ denote the group of all characters (respectively, one-parametr subgroups) of $T$. Let $E_{1}:=\ X(T)\otimes \mathbb{R}$, and $E_{2}:=\ Y(T)\otimes \mathbb{R}$.\\
 Let $\langle.,.\rangle :\ E_{1}\times E_{2}\longrightarrow\mathbb{R}$ be the canonical non-degenerate form. For every $1\leq j\leq n-1$, there exists $\lambda_{j}\in Y(T)$ such that $\langle\alpha_{i},\lambda_{j}\rangle=\delta_{i,j}$ for all $1\leq i\leq n-1$. Let \[\overline{C(B)}:=\{\lambda\in E_{2}|\ \langle \alpha,\lambda\rangle\geq0\ \text{for all\ }  \alpha\in R^{+}\}.\]	
We have $X(T)\otimes \mathbb{R}=Hom_{\mathbb{R}}(\mathfrak{h}_{\mathbb{R}},\mathbb{R})$, the dual of the real form of $\mathfrak{h}$. The positive definite $W$ - invariant form on $Hom_{\mathbb{R}}(\mathfrak{h}_{\mathbb{R}},\mathbb{R})$ induced by the Killing form on $\mathfrak{g}$ is denoted by $(-,-)$. For any $\mu\in X(T)\otimes\mathbb{R}$ and $\alpha\in R$, denote \[\langle \mu,\alpha\rangle=\frac{2(\mu,\alpha)}{(\alpha,\alpha)}.\]
 Let $\{\omega_{i}:1\leq i\leq n-1\}\subseteq E_{1}$ be the set of fundamental weights, i.e. $\langle\omega_{i},\alpha_{j}\rangle=\delta_{i,j}$ for all $1\leq i,j\leq n-1$.
There is a natural partial order $\le$ on $X(T)$ defined by $\psi\leq \chi$ if and only if $\chi-\psi$ is a nonnegative integral linear combination of simple roots.
Let $u_{\alpha}:\mathbb{C}\longrightarrow U_{\alpha}$ be the isomorphism such that $tu_{\alpha}(a)t^{-1}=u_{\alpha}(\alpha(t)a)$, for all $t\in T$, $a\in \mathbb{C}$. For any $\beta\in R$, $U_{\beta}^{\times}$ denotes the set of non-identity elements of the unipotent group $U_{\beta}$.

A simple root $\alpha_{i}\in S$ is said to be cominuscule if the coefficient of $\alpha_{i}$ in the expression of highest root is $1$. A fundamental weight $\omega_{i}$ is said to be minuscule if $\omega_{i}$ satisfies $\langle\omega_{i},\beta\rangle\leq 1$ for all $\beta\in R^{+}$.\\
Now we recall the definition of semistable point from \cite{GIT}(also see \cite{NEWSTEAD}).

Let $H$ be a reductive algebraic group acting morphically on a projective variety $X$. Let $\mathcal{L}$ be a $H$-linearized very ample line bundle on $X$.
\begin{enumerate}
	\item The set of semistable points  is defined as
	\begin{align*}
		X^{ss}_{H}(\mathcal{L}):=\{x\in X:\exists s\in H^{0}(X,\mathcal{L}^{\otimes m})^{H} \text{ for some $m\in\mathbb{N}$ such that } s(x)\neq 0\}.
	\end{align*}
\item  The set of stable points is defined as
\begin{align*}
	X^{s}_{H}(\mathcal{L}):=\{&x\in 	X^{ss}_{H}(\mathcal{L}): \text{the orbit $H\cdot x$ is closed in 	$X^{ss}_{H}(\mathcal{L})$ and the stabilizer $H_{x}$}\\&  \text{of $x$ in $H$ is finite}\}.
\end{align*}
\end{enumerate}
 Let $x\in \mathbb{P}(H^{0}(X,\mathcal{L})^{*})$ and $\hat{x}$ be a point in the cone $\hat{X}$ over X which lies on $x$. Since $\lambda(\mathbb{G}_{m})$ is a torus, there is a basis  $\{v_{i}: 1\leq i\leq k\}$ of $H^{0}(X,\mathcal{L})^{*}$ and integers $m_{1},m_{2},\ldots,m_{k}$ such that 
$\lambda(t)\cdot v_{i}=t^{m_{i}}v_{i}$, for $1\leq i\leq k$, $t\in \mathbb{G}_{m}$.
Write $\hat{x}=\displaystyle\sum_{i=1}^{k}c_{i}v_{i}$, with $c_{1},\ldots,c_{k}\in\mathbb{C}$. Then the Hilbert-Mumford numerical function is defined by \begin{align*}
	\mu^{\mathcal{L}}(x,\lambda):= -\displaystyle \min_{i}\{m_{i}: c_{i}\neq 0\}.
\end{align*}We recall Hilbert-Mumford criterion.
\begin{theorem}$($see \cite[Theorem 2.1]{GIT}$)$ Let $x\in X$. Then 
\begin{enumerate}
	\item $x\in X^{ss}_{H}(\mathcal{L})$ if and only if $\mu^{\mathcal{L}}(x,\lambda)\geq 0$ for all one parameter subgroup $\lambda$ of $H$.
	\item $x\in X^{s}_{H}(\mathcal{L})$ if and only if $\mu^{\mathcal{L}}(x,\lambda)> 0$ for all non trivial one parameter subgroup $\lambda$ of $H$. 
\end{enumerate}
\label{HM theprem for G}
\end{theorem}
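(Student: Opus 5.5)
The plan is to move everything to the affine cone. Since $\mathcal{L}$ is very ample, embed $X\hookrightarrow\mathbb{P}(V)$ with $V=H^{0}(X,\mathcal{L})^{*}$. After replacing $H$ by a cover if necessary, the linearization gives a linear $H$-action on $V$ preserving the cone $\hat X$. Fix $x\in X$ with lift $\hat x\in\hat X\setminus\{0\}$. Homogeneous $H$-invariants on $\hat X$ separate disjoint closed $H$-stable subsets; this uses reductivity and the Reynolds operator. Hence $x\in X^{ss}_{H}(\mathcal{L})$ if and only if $0\notin\overline{H\cdot\hat x}$. In one direction, a nonvanishing invariant $s$ of positive degree is constant and nonzero on $H\cdot\hat x$, so it is nonzero on the closure, while $s(0)=0$. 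In the other direction, separate $\{0\}$ from the closed orbit inside $\overline{H\cdot\hat x}$.

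Next I translate the numerical function. With the weight basis $v_i$ for $\lambda$, we have $\lambda(t)\hat x=\sum c_i t^{m_i}v_i$. So $\lim_{t\to0}\lambda(t)\hat x=0$ if and only if $m_i>0$ whenever $c_i\neq0$, that is, $\mu^{\mathcal{L}}(x,\lambda)<0$. The limit exists and is nonzero exactly when $\min\{m_i: c_i\neq 0\}=0$, that is, when $\mu^{\mathcal{L}}(x,\lambda)=0$. This gives the easy direction of (1): if some $\lambda$ has $\mu<0$, then $0\in\overline{H\hat x}$ and $x$ is unstable.

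The main obstacle is the converse. If $0\in\overline{H\cdot\hat x}$, one must produce a one parameter subgroup $\lambda$ with $\lambda(t)\hat x\to0$. I would follow Mumford's argument.
\begin{enumerate}
\item Pick a curve germ in $H$, i.e.\ a $\mathbb{C}((t))$-point $h(t)\in H(\mathbb{C}((t)))$, such that $h(t)\hat x\to0$.
\item Apply the Iwahori (Cartan) decomposition $H(\mathbb{C}((t)))=H(\mathbb{C}[[t]])\,Y(T')\,H(\mathbb{C}[[t]])$ for a maximal torus $T'$ of $H$. This writes $h(t)=a(t)\lambda(t)b(t)$ with $a,b$ regular at $t=0$.
\item Since $a(t)$ is invertible over $\mathbb{C}[[t]]$, the condition $h(t)\hat x\to0$ implies $\lambda(t)b(t)\hat x\to0$. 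Then set $y=b(0)\hat x\in H\cdot\hat x$ and compare weights to deduce that $\lambda(t)y\to0$.
\item Conjugate to get $\lambda'=b(0)^{-1}\lambda b(0)$ with $\lambda'(t)\hat x\to0$.
\end{enumerate}
Step 3 is where care is needed: $b(t)\hat x=y+O(t)$, and the lowest $\lambda$-weights of $y$ control the leading terms.

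For (2) I argue in the same way, using the separation and decomposition steps together with properness of the orbit map. Stability means that $H\cdot\hat x$ is closed in $V$ and the stabilizer $H_{\hat x}$ is finite. If some nontrivial $\lambda$ has $\mu\le0$, then either the limit is $0$, so $x$ is unstable, or the limit is a nonzero point $y$. In the second case, either $y\notin H\hat x$, so the orbit is not closed, or $y\in H\hat x$ is fixed by $\lambda$, so a conjugate of $\lambda$ lies in the stabilizer and the stabilizer is infinite. Conversely, suppose the orbit is not closed or the stabilizer is infinite. Applying the decomposition argument to a curve $h(t)$ with $h(t)\hat x\to y\in\overline{H\hat x}\setminus H\hat x$ yields a nontrivial $\lambda$ with $\mu\le0$. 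In the case of an infinite stabilizer, a nontrivial one parameter subgroup of its identity component fixes $\hat x$ and therefore has $\mu=0$.
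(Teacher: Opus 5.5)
Your argument for (1), for the forward direction of (2), and for the non-closed-orbit case of the converse in (2) is correct. It is the argument of the cited source; the paper gives no proof, only the reference to Mumford. The separation by invariants, the $\mathbb{C}((t))$-germ, the Iwahori--Cartan decomposition $h=a\lambda b$, and the weight comparison in your Step 3 all work as you describe. In the non-closed case, $\lambda$ is nontrivial because otherwise $h(t)\in H(\mathbb{C}[[t]])$, and the limit $h(0)\hat x$ would lie in $H\hat x$.

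The genuine gap is your last sentence. When $H\hat x$ is closed but $H_{\hat x}$ is infinite, you assert that a nontrivial one parameter subgroup of $H_{\hat x}^{0}$ fixes $\hat x$. But a positive-dimensional linear algebraic group need not contain any nontrivial one parameter subgroup: it could be unipotent, e.g.\ $\mathbb{G}_a$. Nothing in your argument rules this out for $H_{\hat x}^{0}$, and excluding it is a genuine theorem.

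There are two standard repairs.
\begin{enumerate}
\item Invoke Matsushima's criterion. $H\hat x\cong H/H_{\hat x}$ is closed in $V$, hence affine, so $H_{\hat x}$ is reductive. A positive-dimensional reductive group contains a nontrivial torus.
\item Stay within your own method and actually use the properness of the orbit map that you mention. $H_{\hat x}$ is a positive-dimensional affine group, hence not complete. So, after a ramified base change, there is $g(t)\in H_{\hat x}(\mathbb{C}((t)))$ with $g(t)\notin H(\mathbb{C}[[t]])$. Write $g=a\lambda b$. Then $\lambda$ is nontrivial, and $\lambda(t)b(t)\hat x=a(t)^{-1}\hat x$ is regular at $t=0$. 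Your Step 3 comparison then shows that $\lim_{t\to0}\lambda(t)b(0)\hat x$ exists, i.e.\ $\mu^{\mathcal{L}}(x,b(0)^{-1}\lambda b(0))\le 0$. This route treats both failures of stability uniformly, as failures of properness of $h\mapsto h\hat x$.
\end{enumerate}

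Separately, you silently replace the paper's definition of stability by closedness of $H\hat x$ in $V$ and finiteness of $H_{\hat x}$. The paper's definition uses closedness of $H\cdot x$ in $X^{ss}_{H}(\mathcal{L})$ and finiteness of $H_x$. The replacement is valid for semistable $x$, but it needs a line of justification:
\begin{enumerate}
\item $H_x$ acts on $\mathbb{C}\hat x$ by a character $\chi$ with kernel $H_{\hat x}$. An invariant $f$ of degree $d$ with $f(\hat x)\neq 0$ forces $\chi^{d}=1$, so $H_x$ is finite exactly when $H_{\hat x}$ is.
\item Closedness transfers through the closed $H$-stable slice $\{f=1\}\subseteq V$, which maps finitely onto $X_f=\{f\neq 0\}$.
\end{enumerate}
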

\begin{corollary} Let $H,X, \mathcal{L}$ be as above. Let $\lambda :\mathbb{G}_{m}\rightarrow H$ be an one parameter subgroup. Then, we have 
\begin{enumerate}
	\item $x\in X^{ss}_{\lambda(\mathbb{G}_{m})}(\mathcal{L})$  if and only if both $\mu^{\mathcal{L}}(x,\lambda)$ and $\mu^{\mathcal{L}}(x,-\lambda)$ are non-negative.
	\item $x\in X^{s}_{\lambda(\mathbb{G}_{m})}(\mathcal{L})$ if and only if both $\mu^{\mathcal{L}}(x,\lambda)$ and $\mu^{\mathcal{L}}(x,-\lambda)$ are positive.
\end{enumerate}
\label{HM theorem}
\end{corollary}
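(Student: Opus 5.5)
This is a direct consequence of \cref{HM theprem for G}, applied with $H=\lambda(\mathbb{G}_{m})\cong\mathbb{G}_{m}$ (or trivial). The idea is that a one-dimensional torus has essentially only two one-parameter subgroups up to positive scaling, namely $\lambda$ and $-\lambda$, so the Hilbert--Mumford condition "for all one-parameter subgroups" reduces to a check on these two.

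First dispose of the trivial case. If $\lambda$ is trivial, then $\mu^{\mathcal{L}}(x,\lambda)=\mu^{\mathcal{L}}(x,-\lambda)=0$ for every $x$. Every point is semistable, since constant sections of $H^{0}(X,\mathcal{L})$ are invariant and $\mathcal{L}$ is very ample, so some section is nonzero at $x$. No point is stable, since the stabilizer $\lambda(\mathbb{G}_m)$ is the whole group, which is finite only in the degenerate sense. Part (1) then holds directly. Part (2) must be read for nontrivial $\lambda$, or holds in the sense of the convention in \cref{HM theprem for G}(2), where there are no nontrivial one-parameter subgroups; I would just note this convention.

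Now assume $\lambda$ is nontrivial, so $\lambda:\mathbb{G}_m\to\lambda(\mathbb{G}_m)$ is an isogeny of degree $d\geq 1$. The key step is to describe all one-parameter subgroups $\eta:\mathbb{G}_m\to\lambda(\mathbb{G}_m)$. The group $Y(\lambda(\mathbb{G}_m))$ is isomorphic to $\mathbb{Z}$, generated by some $\lambda_0$ with $\lambda=d\lambda_0$ or $\lambda=-d\lambda_0$. Hence every $\eta$ equals $m\lambda_0$ for some $m\in\mathbb{Z}$, and so some positive multiple of $\eta$, namely $d\eta$, lies in $\{k\lambda,\,k(-\lambda): k\geq 0\}$.

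Next I use the homogeneity of the numerical function: $\mu^{\mathcal{L}}(x,k\eta)=k\,\mu^{\mathcal{L}}(x,\eta)$ for $k\in\mathbb{N}$. This is immediate from the definition, because the weights $m_i$ of $k\eta$ on the basis $v_i$ are $k m_i$ and the set $\{i: c_i\neq 0\}$ does not change. It follows that the sign of $\mu^{\mathcal{L}}(x,\eta)$ equals the sign of $\mu^{\mathcal{L}}(x,\pm\lambda)$, with the appropriate choice of sign, for every nontrivial $\eta$, and that $\mu^{\mathcal{L}}(x,\eta)=0$ for trivial $\eta$. Therefore the condition "$\mu^{\mathcal{L}}(x,\eta)\geq 0$ for all $\eta$" is equivalent to $\mu^{\mathcal{L}}(x,\lambda)\geq0$ and $\mu^{\mathcal{L}}(x,-\lambda)\geq0$. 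Likewise, "$\mu^{\mathcal{L}}(x,\eta)>0$ for all nontrivial $\eta$" is equivalent to strict positivity for both $\lambda$ and $-\lambda$. Applying \cref{HM theprem for G}(1) and (2) gives the two statements.

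The only point needing care is the isogeny step: $\lambda$ need not generate $Y(\lambda(\mathbb{G}_m))$. This is handled by passing to positive multiples and using homogeneity, so I expect no real obstacle.
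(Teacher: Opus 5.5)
Your argument for nontrivial $\lambda$ is correct and is the intended one: the paper gives no separate proof and treats the corollary as an immediate application of \cref{HM theprem for G} to $H=\lambda(\mathbb{G}_{m})$, using, as you do, that every one-parameter subgroup of this rank-one torus has a positive multiple that is a nonnegative multiple of $\lambda$ or $-\lambda$, together with the homogeneity $\mu^{\mathcal{L}}(x,k\eta)=k\,\mu^{\mathcal{L}}(x,\eta)$. One correction to your aside on trivial $\lambda$: there the group is trivial, hence finite, and orbits are points, so every point is stable rather than none; this is exactly why (2) fails as literally stated in that case and must be read for nontrivial $\lambda$, which is the reading you end up adopting.
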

Let $G, T, B,$ $\overline{C(B)}$ be as above.
Let $\chi=\displaystyle\sum_{i=1}^{n-1}m_{i}\omega_{i}$ be a non trivial dominant character of $T$. Let $J=\{\alpha_{i}\in S:m_{i}=0\}$. Let $P=P_{J}$, and $w\in W^{J}$, $b\in B$.

For $w\in W^{J}$, let $C(w)=BwP/P$ and $X(w)=\overline{BwP/P}$ denote the Schubert cell and the Schubert variety in $G/P$ corresponding to $w$, respectively.
\begin{lemma}$($See \cite[Lemma 5.1]{Ses2}$)$
Let $x=bwP/P$. 
Let $\lambda\in \overline{C(B)}$ be a one parameter subgroup. Then we have \[\mu^{\mathcal{L}(\chi)}(x,\lambda)=-\langle w(\chi),\lambda\rangle.\]
$($The sign here is negative because we are using left action of P on G/P while in \cite[Lemma 5.1]{Ses2}  the action is on the right.$)$
\label{CSS}
\end{lemma}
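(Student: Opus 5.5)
This is Seshadri's formula (Lemma~\ref{CSS}): for $x=bwP/P$ and $\lambda\in\overline{C(B)}$, $\mu^{\mathcal{L}(\chi)}(x,\lambda)=-\langle w(\chi),\lambda\rangle$. The plan is to compute the Hilbert--Mumford function directly from the weight decomposition of $H^{0}(G/P,\mathcal{L}(\chi))^{*}$, which is a Weyl module with highest weight $\chi$ (up to the sign conventions fixed by the left action).

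The first step is to embed $G/P\hookrightarrow\mathbb{P}(V)$ with $V=H^{0}(G/P,\mathcal{L}(\chi))^{*}$. The point $wP/P$ lifts to an extremal weight vector $p_{w}\in V$ of weight $w(\chi)$ (with the sign convention making $-\langle w(\chi),\lambda\rangle$ come out). Choose a $T$-weight basis of $V$ containing $p_{w}$. Then $\hat{x}=b\cdot p_{w}$ for $b\in B$.

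The second step uses $b=ut$ with $u\in U$ and $t\in T$. Since $t$ only rescales $p_{w}$, we have $\hat x=c\,u\cdot p_{w}$ with $c\neq0$. Write $u$ as a product of root elements $u_{\beta}(a)$ with $\beta\in R^{+}$. Each such factor sends a weight vector of weight $\nu$ to that vector plus vectors of weights $\nu+k\beta$ with $k>0$. Hence
\[
u\cdot p_{w}=p_{w}+\sum_{\nu} v_{\nu},
\]
where each $\nu$ has the form $w(\chi)+(\text{nonzero sum of positive roots})$. The coefficient of $p_{w}$ is therefore nonzero.

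The third step pairs with $\lambda$. Since $\lambda\in\overline{C(B)}$, every positive root satisfies $\langle\beta,\lambda\rangle\ge0$. So every weight $\nu$ appearing in $\hat x$ satisfies $\langle\nu,\lambda\rangle\ge\langle w(\chi),\lambda\rangle$, and the weight $w(\chi)$ itself occurs with nonzero coefficient. The minimum of the exponents $m_i$ over nonzero coordinates is thus exactly the exponent at $p_{w}$. With the left-action sign convention that exponent is $\langle w(\chi),\lambda\rangle$, and so $\mu=-\langle w(\chi),\lambda\rangle$.

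The main obstacle is the sign bookkeeping: whether $\lambda$ acts on the $\nu$-weight space of the dual by $t^{\langle\nu,\lambda\rangle}$ or by $t^{-\langle\nu,\lambda\rangle}$, and whether $wP$ corresponds to weight $w(\chi)$ or $w(-\chi)$. These have to be fixed consistently so that the inequality runs in the direction that makes $p_{w}$ the extremal term, rather than the opposite end. I would settle this by checking the case $w=1$ and $G=SL_2$ on $\mathbb{P}^{1}$, where the answer is a direct computation.
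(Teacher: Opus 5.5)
Your argument is correct, though it takes a different route from the one the paper relies on. The paper gives no proof of its own and cites Seshadri's Lemma 5.1. It reproduces that argument in Case 2 of the proof of Theorem~\ref{semistability criterion}:
\begin{itemize}
\item for $\lambda\in\overline{C(B)}$, the limit $b_{0}=\lim_{t\to0}\lambda(t)b\lambda(t)^{-1}$ exists and centralizes $\lambda$, so $\lim_{t\to0}\lambda(t)\,bwP/P=b_{0}wP/P$;
\item then $\mu(x,\lambda)=\mu(\lim_{t\to0}\lambda(t)x,\lambda)$;
\item $\mu(\cdot,\lambda)$ is unchanged by elements commuting with $\lambda$;
\item at the $T$-fixed point $wP/P$, $\mu$ equals minus the weight of $\lambda$ on the fiber of $\mathcal{L}(\chi)$.
\end{itemize}

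You instead unwind the paper's coordinate definition of $\mu$ directly, by expanding $\hat{x}=c\,u\cdot p_{w}$ in a weight basis. Your route is more elementary and self-contained, but it needs the module structure of $H^{0}(G/P,\mathcal{L}(\chi))^{*}$. Seshadri's version is intrinsic and needs no embedding or basis. Both adapt to the non-dominant $\lambda=-(n-1)\lambda_{jq}+n\lambda_{(j+1)q}$ of Case 2, because there $u$ is a product of $u_{\beta}$'s with $\langle\beta,\lambda\rangle\geq 0$.

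The sign problem you flag as the main obstacle is smaller than you fear. The direction of the inequality is not a matter of convention. In any $G$-module, $U_{\beta}$ with $\beta\in R^{+}$ maps $V_{\nu}$ into $\bigoplus_{k\geq0}V_{\nu+k\beta}$, and $\lambda\in\overline{C(B)}$ pairs nonnegatively with every such shift. So $p_{w}$ always carries the minimal exponent; it never sits at the opposite end.

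The only normalization to fix is the weight of the line over $wP/P$. The line over $eP/P$ is $B$-stable, so it is the highest-weight line of $H^{0}(G/P,\mathcal{L}(\chi))^{*}$. In the paper's normalization that highest weight is $\chi$. This matches the weight $-v(\omega_{r})$ of the Pl\"ucker coordinate $p_{v}$ implicit in Lemma~\ref{remark 1.2}. So the $SL_{2}$ sanity check is not needed.
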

	Following variation of the above Lemma follows from   \cite[Lemma 5.1]{Ses2} by imitating the proof for $B^{-}$.
\begin{lemma}	\label{variation of Seshadri's lemma}
	Let $G,T,B,\chi,J, P$ and $\overline{C(B)}$ be as above. Let $w\in W^{J}$, $x\in B^{-}wP/P$. Then for every $\lambda\in \overline{C(B)}$, we have \[\mu^{\mathcal{L}(\chi)}(x,-\lambda)=\langle w(\chi),\lambda\rangle.\]
\end{lemma}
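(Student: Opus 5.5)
We need to prove: for $x\in B^{-}wP/P$ and $\lambda\in\overline{C(B)}$, $\mu^{\mathcal{L}(\chi)}(x,-\lambda)=\langle w(\chi),\lambda\rangle$. The plan is to compute the weights directly in the embedding of $G/P$ given by $\mathcal{L}(\chi)$, in the same way as the proof of \cref{CSS}, but with $B$ replaced by $B^{-}$.

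\textbf{Setting up.} Since $\chi$ is dominant with support complementary to $J$, the line bundle $\mathcal{L}(\chi)$ is very ample on $G/P$. It gives an embedding $G/P\hookrightarrow\mathbb{P}(V)$, where $V=H^{0}(G/P,\mathcal{L}(\chi))^{*}$ is the irreducible module with extremal weights $W\chi$. We choose a $T$-weight basis $\{v_{i}\}$ of $V$, and let $p_{w}$ be an extremal weight vector lying over $wP/P$. With the paper's sign conventions (those of \cref{CSS}), $p_{w}$ has weight $w(\chi)$ up to the global sign convention; we fix the sign so that \cref{CSS} reads $\mu(bwP/P,\lambda)=-\langle w(\chi),\lambda\rangle$. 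Write $x=u\,wP/P$ with $u\in U^{-}$; we may drop the $T$-part of $B^{-}$, since it only rescales the line.

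\textbf{Weights appearing in $\hat{x}$.} The lift is $\hat{x}=u\cdot p_{w}$. Since $U^{-}$ is generated by the $U_{-\alpha}$ with $\alpha>0$, expanding $u\cdot p_{w}$ shows that every weight $\nu$ occurring with nonzero coefficient satisfies $\nu\le w(\chi)$ in the root order, when weights are normalised as in \cref{CSS}. The weight $w(\chi)$ itself occurs with coefficient $1$. In the opposite convention the same argument gives $\nu\ge w(\chi)$; in either case $w(\chi)$ is the extreme weight in the direction opposite to the one used for $B$ in \cref{CSS}.

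\textbf{Computing $\mu$.} Since $\lambda\in\overline{C(B)}$, we have $\langle\alpha,\lambda\rangle\ge0$ for every $\alpha\in R^{+}$, so $\langle\nu,\lambda\rangle\le\langle w(\chi),\lambda\rangle$ for every weight $\nu$ appearing in $\hat{x}$. Under $-\lambda$, the exponent of $v_{\nu}$ is $m_{\nu}=-\langle\nu,\lambda\rangle$ (matching the convention that produces the formula in \cref{CSS}). Because $w(\chi)$ appears in $\hat{x}$, the minimum of the $m_{\nu}$ over weights appearing in $\hat{x}$ is attained at $\nu=w(\chi)$. Hence $\mu(x,-\lambda)=-\min m_{\nu}=\langle w(\chi),\lambda\rangle$.

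\textbf{Main obstacle.} The main difficulty is keeping the sign conventions consistent: the left versus right action remarked on after \cref{CSS}, and whether $p_{w}$ carries weight $w(\chi)$ or $-w(\chi)$. To handle this, I will calibrate the convention against \cref{CSS} itself. For $x=wP/P$ we have $\hat{x}=p_{w}$, so $\mu(wP/P,-\lambda)=-\mu(wP/P,\lambda)$, and \cref{CSS} gives $-\mu(wP/P,\lambda)=\langle w(\chi),\lambda\rangle$. This pins down the sign. The $U^{-}$ argument then only needs to show that moving from $wP/P$ to $u\,wP/P$ never adds a weight that beats $w(\chi)$ for $-\lambda$. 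This holds because $U^{-}$ lowers weights, i.e. it is the mirror of the role $U$ plays for $\lambda$ in \cref{CSS}.
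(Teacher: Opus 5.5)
Your proposal is correct and follows essentially the route the paper indicates, namely imitating Seshadri's proof of \cref{CSS} with $B^{-}$ and $-\lambda$ in place of $B$ and $\lambda$: writing $x=u^{-}wP/P$, the $w(\chi)$-component of $u^{-}\cdot p_{w}$ is $p_{w}$ itself and every other weight is lower, so for $\lambda\in\overline{C(B)}$ the minimal $(-\lambda)$-exponent is $-\langle w(\chi),\lambda\rangle$. Your direct weight expansion stands in for Seshadri's $P(\lambda)$-invariance and limit-point propositions, and calibrating the sign at the $T$-fixed point $wP/P$ via \cref{CSS} is legitimate because the only convention-dependent input is the weight of $p_{w}$; the fact that $U^{-}$ lowers $T$-weights does not depend on conventions, so your hedge about an ``opposite convention'' is unnecessary.
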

\subsection{Projective bundle}\label{definition:projective bundle} Let  $\pi:\mathcal{V}\longrightarrow X$ be a
vector bundle of rank $m$ on a variety $X$. We recall the definition of projective bundle $\bar{\pi}:\mathbb{P}(\mathcal{V})\longrightarrow X$ on $X$. Let $\mathcal{V}$ has a trivialization $\{(U_{i},\phi_{i}):i\in I\}$ with $\phi_{i}: \pi^{-1}(U_{i})\simeq U_{i}\times \mathbb{C}^{m}$. For each $i,j\in I$, $\phi_{i}\circ\phi_{j}^{-1}$ defines an automorphism of the trivial bundle $(U_{i}\cap U_{j})\times\mathbb{C}^{m}$ and hence a morphism
$g_{ij}:U_{i}\cap U_{j}\longrightarrow GL(m,\mathbb{C})$ such that $\phi_{i}\circ\phi_{j}^{-1}(x,v)=(x,g_{ij}(x)v)$ for all $x\in U_{i}\cap U_{j}$ and $v\in \mathbb{C}^{m}$ . Moreover, the following cocycle conditions hold:\begin{align*}
	g_{ij}(x)\cdot g_{jk}(x)&=g_{ik}(x)\ \text{for all $x\in U_{i}\cap U_{j}\cap U_{k}$}\\
	g_{ij}(x)&=g_{ji}(x)^{-1}\ \text{ for all $x\in U_{i}\cap U_{j}$}
\end{align*}
Note that $g_{ij}$ induces an isomorphism
\begin{align*}
	1\times \bar{g_{ij}}:(U_{i}\cap U_{j})\times \mathbb{P}^{m-1}&\simeq(U_{i}\cap U_{j})\times \mathbb{P}^{m-1}\\
	\text{given by }\bar{g_{ij}}(x,[v])&=(x,[g_{ij}(x)v])
\end{align*} 
Then $1\times \bar{g_{ij}}$ gives gluing data for a variety (see \cite[Ex 2.12, p.80]{Ha}). More precisely, \\
On $\bigsqcup_{i\in I}(U_{i}\times\mathbb{P}^{m-1})$, define an equivalence relation $\sim$ by $(u_{i},[v])\sim (u_{j},[w])$ if and only if $u_{i}=u_{j}$, $([g_{ji}(u_{i})v])=[w]$. Define $\mathbb{P}(\mathcal{V}):=\frac{\bigsqcup_{i\in I}(U_{i}\times\mathbb{P}^{m-1})}{\sim}$ and the equivalence class of $(u_{i},[v])$ is denoted by $[(u_{i},[v])]$.

We have a morphism $\bar{\pi}:\mathbb{P}(\mathcal{V})\longrightarrow X$ given by $\bar{\pi}([(u_{i},[v])])=u$ and $\phi_{i}$ induces trivialization
\begin{align*}
	\bar{\phi_{i}}:\bar{\pi}^{-1}(U_{i})\simeq U_{i}\times\mathbb{P}^{m-1}.
\end{align*}
\subsection{Generalized Bott towers} \label{subsection 2.2}We recall the definition of generalized Bott tower from \cite[A.2, Page 144]{Bott tower}. A generalized Bott tower $X=\{X_{i}:i=0,\ldots,m\}$ with $m$ stages (or of height $m$) is a sequence
\begin{align*}
	X_{m}\xrightarrow{\text{$\pi_m$}}X_{m-1}\xrightarrow{\text{$\pi_{m-1}$}}X_{m-2}\cdots\xrightarrow{\text{$\pi_2$}}X_{1}\xrightarrow{\text{$\pi_{1}$}}X_{0}=\{pt\},
\end{align*}
where for any $1\leq i\leq m$, there exists $n_{i}\in\mathbb{N}$ such that $X_{i}$ is isomorphic to the projective space bundle $\mathbb{P}(\mathcal{E})$ on $X_{i-1}$ and $\mathcal{E}=\mathcal{L}_{i,0}\oplus\mathcal{L}_{i,1}\oplus\mathcal{L}_{i,2}\oplus\cdots\oplus\mathcal{L}_{i,n_{i}}$, for some line bundles $\mathcal{L}_{i,1},\cdots,\mathcal{L}_{i,n_{i}}$ on $X_{i-1}$, and $\mathcal{L}_{i,0}$ is the trivial line bundle on $X_{i-1}$ . We call  $X_{i}$ stage $i$ of the generalized Bott tower $X$.
\section{Combinatorial properties of $w_{r,n}$} In this section, we prove that $X(w_{r,n})$ is the unique minimal dimensional Schubert variety in $G_{r,n}$ admitting semistable points  for the $T_{J_{r}}$-linearized line bundle $\mathcal{L}(n\omega_{r})$.

Fix $r\in\mathbb{N}$. Let $q\in\mathbb{N}$ be such that $q\geq 2$. Let $n=rq+1$. 
 From \cite[Lemma 2.7]{sskannan}, we have 
a reduced expression 
\begin{align*}
	w_{r,n}=(s_{q}s_{q-1}\cdots s_{2}s_{1})(s_{2q}\cdots s_{3}s_{2})\cdots (s_{(r-1)q}\cdots s_{r-1})(s_{rq}\cdots s_{r}).
\end{align*}
Let $T_{J_{r}}$ denote the subtorus of $T$ generated by $\lambda_{jq}(\mathbb{G}_{m})$ ($1\leq j\leq r$).

For every $1\leq j\leq r$, let $$w_{r,n}[j]:=(s_{jq}s_{jq-1}\cdots s_{j})(s_{jq+1}s_{jq}\cdots s_{j+1})\cdots(s_{jq+r-j}s_{jq+r-j-1}\cdots s_{r}).$$ Then, we have the following.
\begin{lemma}\label{section:combinatorial result:1}For every $1\leq j\leq r$, $X(w_{r,n}[j])$ is the unique minimal dimensional Schubert variety in $G_{r,n}$ admitting semistable points for the $\lambda_{jq}(\mathbb{G}_{m})$-linearized line bundle $\mathcal{L}(n\omega_{r})$.
\end{lemma}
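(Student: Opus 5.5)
The plan is to reduce semistability for the rank one torus $\lambda_{jq}(\mathbb{G}_{m})$ to a single numerical inequality on $w$ via \cref{HM theorem}, \cref{CSS} and \cref{variation of Seshadri's lemma}, and then to solve that inequality combinatorially in $I(r,n)$.

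\textbf{Step 1 (criterion).} Put $\chi=n\omega_{r}$ and $\lambda=\lambda_{jq}\in\overline{C(B)}$, and let $w\in W^{S\setminus\{\alpha_{r}\}}$. The semistable locus is open. So if it meets $X(w)$, it meets the dense cell $C(w)$. By \cref{CSS}, a point $x\in C(w)$ satisfies $\mu^{\mathcal{L}(\chi)}(x,\lambda)=-\langle w(\chi),\lambda\rangle$, so by \cref{HM theorem} semistability forces $\langle w(\chi),\lambda\rangle\leq 0$.

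Conversely, assume $\langle w(\chi),\lambda\rangle\leq 0$. The set $C(w)\cap B^{-}P/P$ is a nonempty open subset of $X(w)$, because the identity coset lies in $X(w)$ and $B^{-}P/P$ is open. Take $x$ in this set. By \cref{variation of Seshadri's lemma} applied with the identity element, $\mu^{\mathcal{L}(\chi)}(x,-\lambda)=\langle \chi,\lambda\rangle\geq 0$, since $\chi$ is dominant. Also $\mu^{\mathcal{L}(\chi)}(x,\lambda)=-\langle w(\chi),\lambda\rangle\geq0$. Hence $x$ is semistable by \cref{HM theorem}. Thus $X(w)$ admits semistable points if and only if $\langle w(n\omega_{r}),\lambda_{jq}\rangle\leq 0$.

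\textbf{Step 2 (computation).} Use the standard coordinates $\varepsilon_{1},\dots,\varepsilon_{n}$, in which $n\omega_{r}=n\sum_{i\leq r}\varepsilon_{i}-r\sum_{k=1}^{n}\varepsilon_{k}$. The cocharacter $\lambda_{jq}$ pairs with $\varepsilon_{k}$ to give $1-\tfrac{jq}{n}$ if $k\leq jq$ and $-\tfrac{jq}{n}$ otherwise, so $\langle\sum_{k}\varepsilon_{k},\lambda_{jq}\rangle=0$. Write $a=\#\{i\leq r: w(i)\leq jq\}$. Then
\[
\langle w(n\omega_{r}),\lambda_{jq}\rangle = na-rjq .
\]
The condition therefore reads $a\leq \frac{rjq}{rq+1}$. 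Now $\frac{rjq}{rq+1}<j$, and $\frac{rjq}{rq+1}\geq j-1$ because $rq\geq j-1$. Since $a$ is an integer, the condition is exactly $a\leq j-1$. As $w(1)<\dots<w(r)$, this is equivalent to $w(j)\geq jq+1$.

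\textbf{Step 3 (minimal element).} The set $\{(a_{1},\dots,a_{r})\in I(r,n): a_{j}\geq jq+1\}$ has the unique minimum
\[
(1,\dots,j-1,\ jq+1,\ jq+2,\dots,jq+r-j+1)
\]
with respect to the order on $I(r,n)$, which matches the Bruhat order. The dimension of $X(w)$ is $\sum_{i}(a_{i}-i)$, and this is strictly monotone for the partial order. Hence every other element of the set gives a Schubert variety of strictly larger dimension, which yields both minimality and uniqueness.

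\textbf{Step 4 (identification).} It remains to check that $w_{r,n}[j]$ corresponds to this tuple. Apply the factors in $w_{r,n}[j]$ from the right, that is, start with $s_{jq+r-j}\cdots s_{r}$, which sends $r$ to $jq+r-j+1$. Working leftwards, the next factor sends $r-1$ to $jq+r-j$, and so on, until $s_{jq}\cdots s_{j}$ sends $j$ to $jq+1$. Indices $i<j$ are never moved, since every simple reflection occurring in $w_{r,n}[j]$ is $s_{k}$ with $k\geq j$. One must check that the earlier images are not disturbed by the later cycles; this is routine index bookkeeping.

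The expression is reduced because its length is $(r-j+1)(jq-j+1)=\sum_{i}(a_{i}-i)$ for the tuple above. The main care point is the inequality $j-1\leq \frac{rjq}{rq+1}<j$ together with the sign conventions in Step 1. Once these are settled, the rest is bookkeeping.
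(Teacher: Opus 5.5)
Your argument is correct, but it follows a different route from the paper. The paper's proof has two steps. It proves the estimate $j-1<\frac{(jq)r}{n}<j$, so that $\lfloor \frac{(jq)r}{n}\rfloor=j-1$. It then invokes \cite[Lemma 5.8]{GKgit}, an external result that describes the minimal Schubert variety admitting $\lambda_i(\mathbb{G}_m)$-semistable points in terms of this floor.

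You instead reconstruct the content of that cited lemma from the tools recalled in Section 2:
\begin{enumerate}
\item Lemma~\ref{CSS} on $C(w)$, together with Lemma~\ref{variation of Seshadri's lemma} at the identity on the nonempty open set $C(w)\cap B^{-}P/P$, reduces semistability to the condition $\langle w(n\omega_r),\lambda_{jq}\rangle\le 0$.
\item The coordinate computation $\langle w(n\omega_r),\lambda_{jq}\rangle=na-rjq$ turns this into $a\le \lfloor \frac{rjq}{n}\rfloor=j-1$, that is, $w(j)\ge jq+1$.
\item The minimum of that set in $I(r,n)$ can then be read off directly.
\end{enumerate}

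The numerical core is the same in both proofs: the floor equals $j-1$. Only the non-strict bound $\frac{rjq}{n}\ge j-1$ is needed, which is all you prove.

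Your version gains self-containedness and transparency. It explains why this floor controls the answer. It also verifies explicitly that the word $w_{r,n}[j]$ represents the tuple $(1,\dots,j-1,jq+1,\dots,jq+r-j+1)$ and is reduced. The paper leaves this matching to the reader of \cite{GKgit}.

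Your bookkeeping in Step 4 does go through. Each factor further to the left has strictly smaller maximal index, so it fixes the values already placed. Your length comparison with $\sum_i(a_i-i)$ then shows that the element is the minimal coset representative and that the word is reduced.

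The paper's version gains brevity, at the cost of depending on the exact form of the external lemma.
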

\begin{proof}
	Fix $j\in\{1,2,\ldots,r\}$. Since $n=rq+1$, we have \begin{align}\label{eq 1.1.6}
		\frac{(jq)r}{n}=\frac{j(n-1)}{n}<j
	\end{align}
\begin{align}\label{eq 1.1.7}
	\frac{(jq)r}{n}-(j-1)=\frac{jqr-jn+n}{n}=\frac{n-j}{n}>0
\end{align}
From \cref{eq 1.1.6} and \cref{eq 1.1.7}, we have 
\begin{align*}
	j-1<\frac{(jq)r}{n}<j.
\end{align*}
Therefore, we have $\lfloor\frac{(jq)r}{n}\rfloor=j-1$. Now the proof follows from \cite[Lemma 5.8]{GKgit}.
\end{proof}
\begin{lemma}\label{sction:combinatorial resul:2}
	Assume that $(r,n)=1$. Let $w_{r,n}$ be the unique minimal element in $W^{S\setminus\{\alpha_{r}\}}$ such that $X(w_{r,n})^{ss}_{T}(\mathcal{L}(n\omega_{r}))\neq \phi$. Let $\mathrm{Peaks}(w_{r,n})=\{\alpha_{i_{j}}:1\leq j\leq l\}$ and $T_{J}$ denote the subtorus of $T$ generated by $\lambda_{i_{j}}(\mathbb{G}_{m})$ $($$1\le j\le l$$)$. Then $X(w_{r,n})$ is the unique minimal element of $W^{S\setminus\{\alpha_{r}\}}$ such that $X(w_{r,n})^{ss}_{T_{J}}(\mathcal{L}(n\omega_{r}))\neq \phi$.
\end{lemma}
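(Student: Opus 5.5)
Two things have to be shown: $X(w_{r,n})$ has $T_{J}$-semistable points, and every $w\in W^{S\setminus\{\alpha_{r}\}}$ with $X(w)^{ss}_{T_{J}}(\mathcal{L}(n\omega_{r}))\neq\phi$ satisfies $w\geq w_{r,n}$.

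\textbf{Existence.} This direction is immediate. Since $T_{J}\subseteq T$, every $T$-invariant section of $\mathcal{L}(n\omega_{r})^{\otimes m}$ is also $T_{J}$-invariant. Hence
\[
X(w_{r,n})^{ss}_{T}(\mathcal{L}(n\omega_{r}))\subseteq X(w_{r,n})^{ss}_{T_{J}}(\mathcal{L}(n\omega_{r})),
\]
and the left-hand side is nonempty by the choice of $w_{r,n}$.

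\textbf{Minimality: reduction to one-parameter subgroups.} Suppose $X(w)^{ss}_{T_{J}}(\mathcal{L}(n\omega_{r}))\neq\phi$. Then for each peak $\alpha_{i_{j}}$ we have $X(w)^{ss}_{\lambda_{i_{j}}(\mathbb{G}_{m})}(\mathcal{L}(n\omega_{r}))\neq\phi$. Applying \cref{HM theorem} together with \cref{CSS} to the Schubert cell, and \cref{variation of Seshadri's lemma} to the opposite cell $B^{-}vP/P$ for $v\leq w$, this means the following. For each $j$, the $T$-weights $-v(n\omega_{r})$, with $v\leq w$, evaluated against $\lambda_{i_{j}}$, must take values of both signs, including zero. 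Concretely, writing $w\leftrightarrow(a_{1},\ldots,a_{r})\in I(r,n)$, we get
\[
\langle w(n\omega_{r}),\lambda_{i}\rangle = n\cdot\#\{k: a_{k}\leq i\}-ri ,
\]
and semistability for $\lambda_{i}$ forces $\langle w(n\omega_{r}),\lambda_{i}\rangle\leq 0$. The lower bound is automatic from the identity coset. So the condition is
\[
\#\{k: a_{k}\leq i_{j}\}\leq \left\lfloor \frac{r\,i_{j}}{n}\right\rfloor ,
\]
which is exactly the content of \cite[Lemma 5.8]{GKgit} used in \cref{section:combinatorial result:1}. That lemma identifies, for each peak, the unique minimal element $u_{j}$ with $\lambda_{i_{j}}$-semistable points, and yields $w\geq u_{j}$ for every $j$.

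\textbf{Minimality: combining the peak conditions.} Next I would use the known description of $w_{r,n}$ from \cite{sskannan}, where $T$-semistability is the condition $\#\{k:a_{k}\leq i\}\leq\lfloor ri/n\rfloor$ for all $i$. I would show that the conditions at the peaks already imply the conditions at all $i$. Between two consecutive peaks $i_{j}<i<i_{j+1}$, the descent structure of $w_{r,n}$ (the peaks are the positions where the entries $a_{k}$ of $w_{r,n}$ jump) means that $\lfloor ri/n\rfloor$ stays constant between consecutive peaks. Since $\#\{k:a_{k}\leq i\}$ is nondecreasing in $i$, the inequality at $i_{j+1}$ gives it for all $i$ in between, once one checks that $\lfloor ri/n\rfloor$ on $[i_{j},i_{j+1})$ equals the value at $i_{j+1}$ minus the jump.

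Hence the join $\sup_{j}u_{j}$ in $I(r,n)$, taken componentwise, equals $w_{r,n}$. Then $w\geq u_{j}$ for all $j$ gives $w\geq w_{r,n}$, which proves both minimality and uniqueness.

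\textbf{Main obstacle.} I expect the hard step to be this last one: verifying that the componentwise maximum of the $u_{j}$ is exactly $w_{r,n}$, i.e. that the peaks capture every binding constraint $\lfloor ri/n\rfloor$. I would try to check it directly from the tuple of $w_{r,n}$ given by the combinatorial description in \cite{sskannan}. Its entries $a_{k}$ are the minimal integers with $\lfloor r a_{k}/n\rfloor\geq k$ (or $k-1$ appropriately), and the peaks are $a_{k}-1$, which are precisely the points where the floor function increments. I would use coprimality $(r,n)=1$ to rule out ties.
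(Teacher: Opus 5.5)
Your overall strategy is the paper's. Existence holds because $T$-invariant sections are $T_J$-invariant. Minimality comes from applying Hilbert--Mumford and \cref{CSS} separately to each peak coweight $\lambda_{i_j}$. The paper phrases the peak condition as a lower bound on the number of occurrences of $s_{i_j}$ in $v$. It uses stability (hence coprimality) to get $\langle w_{r,n}(n\omega_r),\lambda_{i_j}\rangle<0$, and minimality to get $\langle s_{i_j}w_{r,n}(\omega_r),\lambda_{i_j}\rangle>0$. It then simply asserts $v=w_{r,n}$. Your translation into $I(r,n)$ is the right way to actually prove that last step, and your argument never uses $(r,n)=1$.

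The gap is in how you combine the peak conditions. You assume that every $a_k-1$ is a peak and that $\lfloor ri/n\rfloor$ is constant between consecutive peaks. With $w_{r,n}=(a_1,\ldots,a_r)$ and $a_k=\lceil kn/r\rceil$, the peaks (left descents) are only those $a_k-1$ with $a_k-1\neq a_{k-1}$. When $r<n<2r$, some $a_k=a_{k-1}+1$. For example, take $(r,n)=(3,4)$: then $w_{3,4}=(2,3,4)$ and the only peak is $\alpha_1$. Here $\lfloor 3i/4\rfloor$ takes the values $0,1,2,3$ for $i=1,\ldots,4$, so the floor is not constant after the last peak. Monotonicity of $\#\{k:b_k\le i\}$ then gives nothing at $i=2,3$. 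Even when $n\ge 2r$, the floor is constant on $(i_j,i_{j+1}]$, not on $[i_j,i_{j+1})$. Your mechanism does work for $n\ge 2r$, which covers the paper's case $n=rq+1$, $q\ge 2$, since there $a_k-a_{k-1}\ge\lfloor n/r\rfloor\ge 2$. It does not prove the lemma as stated for all coprime $(r,n)$.

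The fix is short.
\begin{itemize}
\item For every $k$, $a_{k-1}\le a_k-1<kn/r$ gives $\lfloor r(a_k-1)/n\rfloor=k-1$. So at a peak $i=a_k-1$, the $\lambda_i$-condition $\#\{l:b_l\le a_k-1\}\le k-1$ says exactly $b_k\ge a_k$. Equivalently, $u_j=(1,\ldots,k-1,a_k,a_k+1,\ldots,a_k+r-k)$.
\item At a non-peak index $k$, one has $a_k=a_{k-1}+1$, and $b_k\ge b_{k-1}+1$ because the $b_l$ are strictly increasing.
\item Since $a_1-1\ge 1$ is always a peak, induction on $k$ gives $b_k\ge a_k$ for all $k$, i.e. $w\ge w_{r,n}$.
\end{itemize}
Equivalently, the componentwise maximum of the $u_j$ is $w_{r,n}$. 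With this replacement for your ``constant floor'' step, your proof is complete.
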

\begin{proof}
	Since $X(w_{r,n})^{ss}_{T}(\mathcal{L}(n\omega_{r}))\neq \phi$, there exists a natural number $d$ and a non zero section $s\in H^{0}(X(w_{r,n}),\mathcal{L}(dn\omega_{r}))^{T}$.\\ Since $T_{J_{r}}\subseteq T$, we have $s\in 	H^{0}(X(w_{r,n}),\mathcal{L}(dn\omega_{r}))^{T_{J_{r}}}$. Hence, $X(w_{r,n})^{ss}_{T_{J_{r}}}(\mathcal{L}(n\omega_{r}))\neq \phi$.
	Since $X(w_{r,n})^{ss}_{T}(\mathcal{L}(n\omega_{r}))\neq \phi$ and $Bw_{r,n}P/P$ is open, there exists a point $x\in X(w_{r,n})^{ss}_{T}(\mathcal{L}(n\omega_{r}))\cap Bw_{r,n}P/P$.\\ Since $X(w_{r,n})^{ss}_{T}(\mathcal{L}(n\omega_{r}))=X(w_{r,n})^{s}_{T}(\mathcal{L}(n\omega_{r}))$, from \cref{HM theprem for G}, we have  $\mu^{\mathcal{L}(n\omega_{r})}(x,\lambda)>0$ for all $\lambda\in Y(T)\setminus\{0\}$. Therefore, $\mu^{\mathcal{L}(n\omega_{r})}(x,\lambda)>0$ for all $\lambda\in Y(T_{J})$. In particular, $\mu^{\mathcal{L}(n\omega_{r})}(x,\lambda_{i_{j}})>0$ for all $1\leq j\leq l$. By \cref{CSS}, we have $\mu^{\mathcal{L}(n\omega_{r})}(x,\lambda_{i_{j}})=-\langle w_{r,n}(n\omega_{r}),\lambda_{i_{j}}\rangle$. Hence, $\langle w_{r,n}(n\omega_{r}),\lambda_{i_{j}}\rangle<0$. Since $w_{r,n}$ is a minimal element in $W^{S\setminus\{\alpha_{r}\}}$ such that $X(w_{r,n})^{ss}_{T}(\mathcal{L}(n\omega_{r}))\neq \phi$, for every $1\leq j\leq l$, $\langle s_{i_{j}}w_{r,n}(\omega_{r}),\lambda_{i_{j}}\rangle>0$. Let $v$ be a minimal element of $W^{S\setminus\{\alpha_{r}\}}$ such that $X(v)^{ss}_{T_{J}}(\mathcal{L}(n\omega_{r})\neq\phi$. Then the number of times appearences of $s_{i_{j}}$'s $($ $1\leq j\leq l$ $)$ in $v$ is equal to the number of times appearing in $w_{r,n}$. Therefore, $v=w_{r,n}$.
\end{proof}
We illustrate \cref{semistability criterion} and \cref{main result 1} in the following example.
\begin{example}Let $r=q=3$.  Let $\mathcal{L}$ denote the restriction of the ample line bundle $\mathcal{O}(10)$ to $G_{3,10}$ associated to the Plücker embedding
	$G_{3,10} \hookrightarrow \mathbb{P}\!\left( \wedge^{3} \mathbb{C}^{10} \right).$
	
Let $\mathcal{L}^{\prime}$ denote the restriction of the ample line bundle $\mathcal{O}(7)$ to $G_{2,7}$ associated to the Plücker embedding
$G_{2,7} \hookrightarrow \mathbb{P}\!\left( \wedge^{2} \mathbb{C}^{7} \right).$

In the first step, we identify semistable locus.
In one line notation $w_{3,10}=(4,7,10)$ and $w_{2,7}=(4,7)$.
The Schubert cell $C(w_{3,10})$ can be identified with the subset of $M_{10\times 3}(\mathbb{C})$ consisting of matrices of the form
\begin{align*}
	A=\begin{pmatrix}
		a_{1,1}&a_{1,2}&a_{1,3}\\
		a_{2,1}&a_{2,2}&a_{2,3}\\
		a_{3,1}&a_{3,2}&a_{3,3}\\
		1&0&0\\
		0&a_{5,2}&a_{5,3}\\
		0&a_{6,2}&a_{6,3}\\
		0&1&0\\
		0&0&a_{8,3}\\
		0&0&a_{9,3}\\
		0&0&1
	\end{pmatrix}
\end{align*}
 In view of \cref{sction:combinatorial resul:2}, semistable locus is contained in the open Schubert cell $C(w_{3,10})$. 
Let $A=(a_{i,j})\in C(w_{3,10})$. Then $A\in X(w_{3,10})^{ss}_{T_{J_{3}}}(\mathcal{L})$ if and only if \begin{enumerate}
	\item at least one of $\{a_{1,1}, a_{2,1}, a_{3,1}\}$ is non zero, and
	\item at least one of $\{a_{1,2},a_{2,2},a_{3,2},a_{5,2},a_{6,2}\}$ is non zero, and
	\item at least one of $\{a_{1,3},a_{2,3},a_{3,3},a_{5,3},a_{6,3},a_{8,3},a_{9,3}\}$ is non zero.
\end{enumerate}

The Schubert cell $C(w_{2,7})$ can be identified with the subset of $M_{7\times 2}(\mathbb{C})$ consiting of matrices of the form
\begin{align*}
	B=\begin{pmatrix}
		b_{1,1}&b_{1,2}\\
		b_{2,1}&b_{2,2}\\
		b_{3,1}&b_{3,2}\\
		1&0\\
		0&b_{5,2}\\
		0&b_{6,2}\\
		0&1
	\end{pmatrix}
\end{align*}
Similarly, for $B=(b_{i,j})\in C(w_{2,7})$, $B\in X(w_{2,7})^{ss}_{T_{J_{2}}}(\mathcal{L}^{\prime})$ if and only if 
\begin{enumerate}
	\item at least one of $\{b_{1,1},b_{2,1},b_{3,1}\}$ is non zero, and
	\item at least one of $\{b_{1,2},b_{2,2},b_{3,2},b_{5,2},b_{6,2}\}$ is non zero.
\end{enumerate}
In the second step, we show that the semistable locus coincides with the stable locus.\\ Thus, the GIT quotients  $\GmodX{T_{J_{3}}}{(X(w_{3,10}))}{\mathcal{L}}$ and $\GmodX{T_{J_{2}}}{(X(w_{2,7}))}{\mathcal{L}^{\prime}}$ are geometric quotients, and are respectively isomorphic to, $T_{J_{3}}\backslash C(w_{3,10})^{s}$and  $T_{J_{2}}\backslash C(w_{2,7})^{s}$.

We define a morphism $\phi:C(w_{3,10})\longrightarrow C(w_{2,7})$ by
\begin{align*}
	\phi(\begin{pmatrix}
		a_{1,1}&a_{1,2}&a_{1,3}\\
		a_{2,1}&a_{2,2}&a_{2,3}\\
		a_{3,1}&a_{3,2}&a_{3,3}\\
		1&0&0\\
		0&a_{5,2}&a_{5,3}\\
		0&a_{6,2}&a_{6,3}\\
		0&1&0\\
		0&0&a_{8,3}\\
		0&0&a_{9,3}\\
		0&0&1
	\end{pmatrix})=\begin{pmatrix}
	a_{1,1}&a_{1,2}\\
	a_{2,1}&a_{2,2}\\
	a_{3,1}&a_{3,2}\\
	1&0\\
	0&a_{5,2}\\
	0&a_{6,2}\\
	0&1
\end{pmatrix}
\end{align*}
Then $\phi$ descends to a morphism $\tilde{\phi}:T_{J_{3}}\backslash C(w_{3,10})^{s}\longrightarrow T_{J_{2}}\backslash C(w_{2,7})^{s}$ and  $\tilde{\phi}$ is a locally trivial $\mathbb{P}^{6}$-bundle.
\end{example}
	 \section{Description of  $X(w_{r,n})^{ss}_{T_{J_{r}}}(\mathcal{L}(n\omega_{r}))$ for $n\equiv1 (mod \ r)$} In this section, we find a description of the semistable locus $X(w_{r,n})^{ss}_{T_{J_{r}}}(\mathcal{L}(n\omega_{r}))$. Further, we prove that $X(w_{r,n})^{ss}_{T_{J_{r}}}(\mathcal{L}(n\omega_{r}))=X(w_{r,n})^{s}_{T_{J_{r}}}(\mathcal{L}(n\omega_{r}))$ and  $\GmodX{T_{J_{r}}}{X(w_{r,n})}{\mathcal{L}(n\omega_{r})}$ is smooth.\\
 For the simplicity of notation, throughout this section, we denote $w_{r,n}$ by $w$ and $P_{S\setminus\{\alpha_{r}\}}$ by $P$.\\  Recall that a reduced expression of $w_{r,n}$ is given by
 \begin{align*}
 	w_{r,n}=(s_{q}s_{q-1}\cdots s_{2}s_{1})(s_{2q}\cdots s_{3}s_{2})\cdots (s_{(r-1)q}\cdots s_{r-1})(s_{rq}\cdots s_{r}).
 \end{align*}
Fix an integer $j$ such that $1\leq j\leq r$. Let
\begin{align*}
	\alpha_{jq,j}:&=\alpha_{jq}\\
	\text{for $j\leq i\leq jq-1$, }\alpha_{i,j}:&=(s_{q}\cdots s_{1})\cdots(s_{(j-1)q}\cdots s_{j-1})(s_{jq}\cdots s_{i+1}(\alpha_{i}))
\end{align*} 
Then from \cite[Lemma 8.3.2(i)]{springer}, we have $R^{+}(w^{-1})=\{\alpha_{i,j}:1\leq j\leq r;j\leq i\leq jq\}$.\\ Note that the elements of $R^{+}(w^{-1})$ are of the form 
$\alpha_{l}+\alpha_{l+1}+\cdots+\alpha_{jq}$, where
$1\leq j\leq r$, and $l\neq kq+1$ for any $1\leq k\leq j-1$ (see \cite[Lemma 2.4]{sskannan}).\\
For $1\leq j\leq r$, define $C_{j}:=\{1,2,\ldots,jq+1\}\setminus\{iq+1: 1\leq i\leq j\}$. For any  $i\in C_{j}$, define $\beta_{i,j}=\sum_{k=i}^{jq}\alpha_{k}$. Then, we have $R^{+}(w^{-1})=\{\beta_{i,j}:1\leq j\leq r, i\in C_{j}\}$. \\
Further, for every fixed $j\in\{1,2,\ldots,r\}$, we have 
\begin{align}\label{Special eq 2}
	\{\alpha_{i,j}:j\leq i\leq jq\}=\{\beta_{i,j}:i\in C_{j}\}
\end{align}
 Let $U_{w}=\prod_{\beta_{i,j}\in R^{+}(w^{-1}) }U_{\beta_{i,j}}$, where the order in which the product is taken is as follows: define a total order on $R^{+}(w^{-1})$ by $\beta_{i,j}\leq \beta_{k,l}$ if either $j=l$ and $i\leq k$ or if $j<l$. Now we take the product so that whenever $\beta_{i,j}\leq \beta_{k,l}$, $u_{\beta_{i,j}}(a_{i,j})$ appears on the left-hand side to $u_{\beta_{k,l}}(a_{k,l})$, where $a_{i,j},\ a_{k,l}\in\mathbb{C}$.
 \begin{lemma}\label{remark 1.1}
 	$u_{\beta_{i,j}}(a_{i,j})$ commutes with $u_{\beta_{k,l}}(a_{k,l})$ for all $\beta_{i,j},\beta_{k,l}\in R^{+}(w^{-1})$, $a_{i,j},\ a_{k,l}\in\mathbb{C}$.
 \end{lemma}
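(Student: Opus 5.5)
Since all roots $\beta_{i,j}$ in $R^{+}(w^{-1})$ are positive roots of type $A_{n-1}$, I will reduce the lemma to the commutator formula for root subgroups. For roots $\beta,\gamma$ with $\beta+\gamma\neq 0$, the Chevalley commutator formula gives
\begin{align*}
u_{\beta}(a)u_{\gamma}(b)u_{\beta}(a)^{-1}u_{\gamma}(b)^{-1}=\prod_{i,j>0}u_{i\beta+j\gamma}(c_{ij}a^{i}b^{j}),
\end{align*}
where the product runs over those $i\beta+j\gamma$ that are roots. In type $A$ only $\beta+\gamma$ can occur. So the commutator is trivial as soon as $\beta+\gamma\notin R$, and for distinct positive roots this also covers $\beta+\gamma\neq 0$. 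Equivalently, working in $\hat{G}=SL(n,\mathbb{C})$, we have $u_{\epsilon_a-\epsilon_b}(x)=I+xE_{a,b}$. Two such elementary matrices $I+xE_{a,b}$ and $I+yE_{c,d}$ commute exactly when $b\neq c$ and $d\neq a$. The claim then descends to $G$ via $\pi$.

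The key step is to write each $\beta_{i,j}$ in $\epsilon$-notation. Since $\beta_{i,j}=\sum_{k=i}^{jq}\alpha_k$, we get
\begin{align*}
\beta_{i,j}=\epsilon_i-\epsilon_{jq+1},\qquad i\in C_j .
\end{align*}
Now take $\beta_{i,j}$ and $\beta_{k,l}$. The sum $\beta_{i,j}+\beta_{k,l}$ is a root only if $jq+1=k$ or $lq+1=i$, that is, only if the terminal index of one root equals the initial index of the other. So I must show that no initial index $i\in C_l$ equals a terminal index $jq+1$ with $1\leq j\leq r$.

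This follows directly from the definition: $C_l=\{1,\ldots,lq+1\}\setminus\{mq+1:1\leq m\leq l\}$. Hence $C_l$ contains no number of the form $mq+1$ with $1\leq m\leq l$. It also contains no number of that form with $m>l$, because every element of $C_l$ is at most $lq+1$ and $lq+1$ itself is removed. Therefore the sets of initial indices and terminal indices are disjoint, and in matrix terms $E_{i,jq+1}E_{k,lq+1}=0=E_{k,lq+1}E_{i,jq+1}$. The two unipotent elements commute, which proves the lemma.

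The only mildly delicate point is the bookkeeping. First, $C_j$ must be the correct index set; this comes from the cited description of $R^{+}(w^{-1})$ in \cite[Lemma 2.4]{sskannan}. Second, one must check that the identification $u_{\alpha}(a)\mapsto I+aE$ is compatible with the projection $\pi$ to $PSL$. This holds because $\pi$ restricts to an isomorphism on each root subgroup, so commutation in $\hat{G}$ implies commutation in $G$.
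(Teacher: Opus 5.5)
Your proof is correct and uses the same reduction as the paper: by the commutator formula it suffices that $\beta_{i,j}+\beta_{k,l}$ is never a root. Where the paper only refers to \cite{sskannan} for this fact, you verify it directly by writing $\beta_{i,j}=\epsilon_i-\epsilon_{jq+1}$ and noting that no element of any $C_l$ has the form $mq+1$ with $m\geq 1$. Your condition, $\beta_{i,j}+\beta_{k,l}\notin R$, is exactly what the commutator formula requires. The paper's version, $\beta_{i,j}+\beta_{k,l}\notin R^{+}(w^{-1})$, suffices only because inversion sets are closed under sums that are roots.
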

\begin{proof} It suffices to show that, for $\beta_{i,j},\beta_{k,l}\in R^{+}(w^{-1})$, $\beta_{i,j}+\beta_{k,l}\notin R^{+}(w^{-1})$. For a proof we refer to \cite{sskannan} (see discussion before Lemma 2.6).
\end{proof}
Let $X_{\beta_{i,j}}$ be the coordinate function on $U_{w}$ corresponding to $\beta_{i,j}\in R^{+}(w^{-1})$. That is $X_{\beta_{i,j}}(\prod_{\substack{1\leq l\leq r\\k\in C_{l}}}u_{\beta_{k,l}}(a_{k,l}))=a_{i,j}$, where $a_{k,l}\in\mathbb{C}$ for all $1\leq l\leq r$ and $k\in C_{l}$. Then the coordinate ring $\mathbb{C}[U_{w}]$ of $U_{w}$ is the polynomial ring $\mathbb{C}[X_{\beta_{i,j}}:1\leq j\leq r, i\in C_{j}]$.

Note that, in one line notation, $w= (q+1,2q+1,\ldots,rq+1)$. 
The Schubert cell $C(w)$ can be identified with the following set of matrices of $M_{n\times r}(\mathbb{C})$ (see \cite[section 1.2.1]{Seshadri}).
\begin{equation}\label{eq 1.0}
	C(w)=
	\left\{
	(a_{i,j})\in M_{n\times r}(\mathbb{C})\Bigg\vert
	\begin{matrix}
		(i) a_{jq+1,k}=\delta_{j,k} &\text{ for $1\leq j\leq r$
			and $1\leq k\leq r$}\\ 
		(ii) a_{i,j}=0 &\text{ for $1\leq j\leq r-1$ and $jq+2\leq i\leq rq+1$} \\
	\end{matrix}
	\right\}
\end{equation}

The map $\psi_{1}:C(w)\longrightarrow U_{w}$ defined by $$\psi_{1}(A)=\displaystyle\prod_{\substack{1\leq j\leq r\\i\in C_{j}}}u_{\beta_{i,j}}(a_{i,j}),$$ for all $A=(a_{i,j})_{\substack{1\leq i\leq n\\ 1\leq j\leq r}}\in C(w)$, 
is an isomorphism of varieties. \\
Let $\psi_{2}:U_{w}\longrightarrow X(w)$ 
be defined by $\psi_{2}(u)=uwP/P$ for all $u\in U_{w}$. Then $\psi_{2}$ is an isomorphism onto $U_{w}wP/P$. 

Consider the natural action of $T$ on $\mathbb{C}[U_{w}]$. Now,
identify $\mathbb{C}[U_{w}]$ with $\mathbb{C}[U_{w}]\otimes_{\mathbb{C}}\mathbb{C}$. Consider the action of $\hat{T}$ on $\mathbb{C}[U_{w}]\otimes_{\mathbb{C}}\mathbb{C}$ through the homomorphism $\pi:\hat{T}\longrightarrow T$ is given by \begin{align}\label{eq 1.1.1}
	t\cdot (f\otimes c)=(\pi(t)\cdot f)\otimes-w(\omega_{r})(t)c
\end{align}
for all $t\in \hat{T}$, $f\in\mathbb{C}[U_{w}]$ and $c\in\mathbb{C}$.
For the natural action of $\hat{T}$ on $H^{0}(X(w),\mathcal{L}(\omega_{r}))$ and the twisted action of $\hat{T}$ on $\mathbb{C}[U_{w}]$ as in \cref{eq 1.1.1}, $\psi_{2}^{*}:H^{0}(X(w),\mathcal{L}(\omega_{r}))\longrightarrow\mathbb{C}[U_{w}]$ is a homomorphism of $\hat{T}$-modules.
\begin{lemma}\label{remark 1.2}Let $v\leq w$ in $W^{S\setminus\{\alpha_{r}\}}$. For the natural action of $\hat{T}$ on $\mathbb{C}[U_{w}]$, the weight of $\psi_{2}^{*}(p_{v})$ is $-v(\omega_{r})+w({\omega_{r}})$.
\end{lemma}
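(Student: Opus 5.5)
We compute the weight of $p_v$ as a section and then follow it through the $\hat{T}$-equivariant map $\psi_{2}^{*}$. Recall from standard monomial theory (see \cite{Seshadri}) that the Plücker coordinate $p_{v}\in H^{0}(G_{r,n},\mathcal{L}(\omega_{r}))=V(\omega_{r})^{*}$ is, up to a nonzero scalar, the linear form dual to the extremal weight vector of weight $v(\omega_{r})$. Hence $p_v$ is a $\hat{T}$-weight vector of weight $-v(\omega_{r})$ for the natural action on sections, and so is its restriction to $X(w)$. The restriction is nonzero because $v\leq w$.

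Next I use that $\psi_{2}^{*}$ is a homomorphism of $\hat{T}$-modules when $\mathbb{C}[U_{w}]\cong\mathbb{C}[U_{w}]\otimes\mathbb{C}$ carries the twisted action \cref{eq 1.1.1}. Therefore $\psi_{2}^{*}(p_{v})=f\otimes 1$, where $f$ is the function $u\mapsto p_{v}(uwP/P)$ in the trivialization given by the nonvanishing section $p_w$ on $U_wwP/P$. This element has twisted weight $-v(\omega_{r})$.

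Now I unwind the twist. By \cref{eq 1.1.1}, $t\cdot(f\otimes 1)=(\pi(t)\cdot f)\otimes(-w(\omega_{r}))(t)$. The twist contributes the character $-w(\omega_{r})$, so the natural weight of $f$ equals $-v(\omega_{r})-(-w(\omega_{r}))=-v(\omega_{r})+w(\omega_{r})$, as claimed.

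As a sanity check, I would verify this directly. For $u\in U_{w}$ and $t\in T$ we have $t\,u\,wP=(tut^{-1})\,w\,(w^{-1}tw)P$. Evaluating $p_v$ then multiplies the ratio $p_v/p_w$ by $v(\omega_r)(t)/w(\omega_r)(t)$ up to the sign convention for the action on functions, $(t\cdot f)(u)=f(t^{-1}ut)$. This gives the same character $w(\omega_r)-v(\omega_r)$.

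The main obstacle is bookkeeping of sign conventions: whether sections have weight $-v(\omega_r)$ or $v(\omega_r)$, and whether $T$ acts on functions via $t^{-1}$. I would fix these by checking the case $v=w$, where $\psi_2^*(p_w)$ is the constant $1$ and must have natural weight $0$. This forces the stated formula.
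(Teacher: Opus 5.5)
Your proposal is correct and is essentially the paper's argument: the paper's one-line proof reads the weight off from the identification $\mathbb{C}[U_{w}]\cong\mathbb{C}[U_{w}]\otimes\mathbb{C}$ and the twisted action \cref{eq 1.1.1}. Implicitly it uses that $p_{v}$ has weight $-v(\omega_{r})$, as it states explicitly later in the proof of \cref{semistability criterion}, so stripping off the twist character $-w(\omega_{r})$ leaves $w(\omega_{r})-v(\omega_{r})$, exactly as you do. One quibble: your $v=w$ check only pins down the offset by $w(\omega_{r})$, not the overall sign; the sign comes from the convention $(t\cdot f)(u)=f(t^{-1}ut)$, under which $X_{\beta}$ has weight $-\beta$ as in \cref{Special lemma 1}, and your direct computation already handles this correctly.
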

	\begin{proof}
	  Proof follows from the identification of $\mathbb{C}[U_{w}]$ with $\mathbb{C}[U_{w}]\otimes\mathbb{C}$ and \cref{eq 1.1.1}.
	\end{proof}

 	 \begin{lemma} \label{Lemma 4.1}
 	For $1\leq i\leq r$,
 	we have $\langle w(n\omega_{r}),\lambda_{iq}\rangle=-(n-i)$ .
 \end{lemma}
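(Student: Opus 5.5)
The plan is to compute the pairing directly, by writing $w(\omega_{r})$ and $\lambda_{iq}$ in the standard coordinates of $\mathfrak{gl}_{n}$ and then using $n=rq+1$.

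First, recall the one-line notation $w=(q+1,2q+1,\ldots,rq+1)$. Viewing weights of $\hat{T}$ as $\sum c_{m}\varepsilon_{m}$ modulo $\sum_{m}\varepsilon_{m}$, we have $\omega_{r}=\varepsilon_{1}+\cdots+\varepsilon_{r}$. Hence
\begin{align*}
w(\omega_{r})=\sum_{j=1}^{r}\varepsilon_{jq+1}.
\end{align*}

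Second, realize the coweight $\lambda_{k}$ (dual to $S$, so $\langle\alpha_{m},\lambda_{k}\rangle=\delta_{m,k}$) in these coordinates. Up to the central direction, it is the diagonal vector $\sum_{m\le k}\epsilon_{m}^{*}-\frac{k}{n}\sum_{m=1}^{n}\epsilon_{m}^{*}$. This gives
\begin{align*}
\langle\varepsilon_{m},\lambda_{k}\rangle=
\begin{cases}
1-\tfrac{k}{n}, & m\le k,\\
-\tfrac{k}{n}, & m>k.
\end{cases}
\end{align*}
The pairing is well defined on weights modulo $\sum\varepsilon_{m}$, because the trace of $\lambda_{k}$ vanishes. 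I will check that $\langle \alpha_{m},\lambda_{k}\rangle=\delta_{m,k}$ to confirm the normalization. After multiplying by $n$, everything is integral, consistent with $n\omega_{r}$ being a character of $T$.

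Third, take $k=iq$. The indices $jq+1$ with $jq+1\le iq$ are exactly those with $1\le j\le i-1$, so there are $i-1$ of them. Therefore
\begin{align*}
\langle w(n\omega_{r}),\lambda_{iq}\rangle = n(i-1)-r\cdot iq = n(i-1)-i(n-1) = -(n-i),
\end{align*}
where the middle step uses $rq=n-1$.

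The only delicate point is the normalization of $\lambda_{iq}$ as a rational coweight of $PSL(n)$ and the sign conventions. I would verify these carefully against \cref{CSS} and against the computation $\lfloor\frac{(jq)r}{n}\rfloor=j-1$ used in the proof of \cref{section:combinatorial result:1}. The rest is direct arithmetic.
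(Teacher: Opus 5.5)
Your proof is correct, but it takes a different route from the paper. The paper writes $w(\omega_{r})=\omega_{r}-\sum_{k}a_{k}\alpha_{k}$, where $a_{k}$ is the number of occurrences of $s_{k}$ in the reduced expression (citing \cite[Corollary 1.4]{sskannan}), so that $\langle w(n\omega_{r}),\lambda_{iq}\rangle=\langle n\omega_{r},\lambda_{iq}\rangle-na_{iq}$. It then reads $\langle n\omega_{r},\lambda_{iq}\rangle$ off the inverse Cartan matrix and counts $a_{iq}$ from the blocks $s_{jq}\cdots s_{j}$. This forces a split into the cases $r\leq iq$ (where $a_{iq}=r-i+1$) and $iq<r$ (where $a_{iq}=iq-i+1$). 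You instead work in $\varepsilon$-coordinates, using the one-line notation $w=(q+1,2q+1,\ldots,rq+1)$ and the explicit traceless form of $\lambda_{iq}$. The whole computation then reduces to the single count $\#\{j\leq r: jq+1\leq iq\}=i-1$, with no case distinction. Your method also gives, for any $v\in W^{S\setminus\{\alpha_{r}\}}$, the formula $\langle v(n\omega_{r}),\lambda_{k}\rangle=n\cdot\#\{j\leq r: v(j)\leq k\}-rk$. The paper's version stays in the language of reduced words and simple-reflection counts, which it also uses for the peaks and when invoking \cite[Lemma 5.8]{GKgit}. Yours needs only the identification of $W^{S\setminus\{\alpha_{r}\}}$ with $I(r,n)$ and the one-line notation of $w$, both already recalled in the paper. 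The sign-convention check against \cref{CSS} that you propose is unnecessary. The lemma concerns only the canonical pairing, so the normalization $\langle\alpha_{m},\lambda_{k}\rangle=\delta_{m,k}$, which you verify, is all that is needed.
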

 \begin{proof}
 	Let $w(\omega_{r})=\omega_{r}-\sum_{k=1}^{n-1}a_{i}\alpha_{i}$, where $a_{i}\in\mathbb{Z}_{\geq 0}$. Note that $a_{i}$ is the number of times the simple reflection $s_{i}$ appears in a reduced expression of $w$ (see \cite[Corollary 1.4]{sskannan}). \\
 	Then we have $\langle w(n\omega_{r}),\lambda_{iq}\rangle=\langle n\omega_{r},\lambda_{iq}\rangle-na_{iq}$.
 	From the description of $\omega_{r}$ in \cite[p.69]{Hum1}, we have
 	\begin{align*}
 		\langle n\omega_{r},\lambda_{iq}\rangle&=\begin{dcases}
 		r(n-iq)&\text{\ if $r\leq iq$}\\
 		iq(n-r)&\text{\ if $iq<r$}
 		\end{dcases}
 	\end{align*}
 	{\it Case 1}: Let $r\leq iq$. In this case $a_{iq}=r-i+1$. Therefore, we have
 	\begin{align*}
 		\langle w(n\omega_{r}),\lambda_{iq}\rangle=r(n-iq)-n(r-i+1)
 		&=-irq+in-n\\
 		&=i-n\text{\ (using $n=rq+1$)}\\
 		&=-(n-i).
 	\end{align*}
 	{\it Case 2}: Let $r>iq$. In this case $a_{iq}=iq-i+1$. Therefore, we have
 	\begin{align*}
 		\langle w(n\omega_{r}),\lambda_{iq}\rangle=iq(n-r)-n(iq-i+1)
 		&=-irq+in-n\\
 		&=i-n(\ \text{using $n=rq+1$})\\
 		&=-(n-i).
 	\end{align*}
 \end{proof}
\subsection{Partition of the set $C_{j}\times\{j\}$}
For every $1\leq j\leq r$ and for every $1\leq p\leq j$, we define the sets $J_{p,j}$ in the following way:
\begin{align*}
	\text{For $1\leq j\leq r$, define }  J_{1,j}:&=\{(i,j):1\leq i\leq q\}.\\
	\text{For $2\leq j\leq r$ and for $2\leq p\leq j$, define } J_{p,j}:&=\{(i,j):(p-1)q+2\leq i\leq pq\}.
\end{align*}
\begin{lemma}\label{remark 1}
	For any $1\leq j\leq r$, we have $C_{j}\times\{j\}=\bigsqcup_{p=1}^{j}J_{p,j}$.
\end{lemma}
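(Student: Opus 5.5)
This is a direct set-theoretic check: I will show that the sets $J_{p,j}$, for $1\le p\le j$, are pairwise disjoint and that their union is exactly $C_{j}\times\{j\}$. Fix $1\leq j\leq r$. Every $J_{p,j}$ has second coordinate $j$, so it suffices to work with first coordinates. Write $I_{1}:=\{1,\ldots,q\}$ and, for $2\le p\le j$, $I_{p}:=\{(p-1)q+2,\ldots,pq\}$. The claim then reduces to
\[
C_{j}=\{1,2,\ldots,jq+1\}\setminus\{iq+1:1\le i\le j\}=\bigsqcup_{p=1}^{j} I_{p}.
\]

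I will first cut $\{1,\ldots,jq+1\}$ into consecutive blocks. These are $\{1,\ldots,q\}$, together with $\{(p-1)q+1,\ldots,pq\}$ for $2\le p\le j$, and finally the singleton $\{jq+1\}$. Since $q\geq 2$, all of these blocks are nonempty, and they partition $\{1,\ldots,jq+1\}$ as a disjoint union.

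Next I will remove the excluded points $iq+1$ for $1\le i\le j$. For $1\le i\le j-1$, the point $iq+1$ is the smallest element of the block indexed by $p=i+1$, and removing it leaves exactly $I_{i+1}$. The point $i=j$ gives $jq+1$, which is the final singleton block, so that block disappears. The first block $\{1,\ldots,q\}=I_{1}$ contains no element of the form $iq+1$ with $i\geq1$, so it stays untouched. Hence $C_{j}=I_{1}\sqcup I_{2}\sqcup\cdots\sqcup I_{j}$. Disjointness is inherited from the disjointness of the blocks, since each $I_{p}$ lies in $[(p-1)q+1,pq]$.

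Finally, taking the product with $\{j\}$ gives $C_{j}\times\{j\}=\bigsqcup_{p=1}^{j}J_{p,j}$. There is no real obstacle here. The only point needing care is the indexing: the excluded point $iq+1$ is the left endpoint of the $(i+1)$-st block, and the top point $jq+1$ is removed entirely. For $j=1$ the statement is just $C_{1}=\{1,\ldots,q\}$.
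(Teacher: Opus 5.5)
Your argument is correct and is essentially the paper's. The paper proves the two inclusions directly, placing each $i\in C_j$ in $J_{p,j}$ for the least $p$ with $i\le pq$, which is exactly your block $\{(p-1)q+1,\ldots,pq\}$ with its left endpoint removed. Your block-partition phrasing also makes explicit the disjointness and the edge cases that the paper leaves implicit: for $p=1$ the point $i=1$ is allowed, and removing $jq+1$ forces $p\le j$.
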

\begin{proof}
	Let $(i,j)\in J_{p,j}$. Then, $i\leq pq$. Since $p\leq j$, we have $i\leq jq$. Further,  $i\neq p^{\prime}q+1$ for every $1\leq p^{\prime}\leq j$. Hence, we have $(i,j)\in C_{j}\times \{j\}$.\\
	Conversely, let $(i,j)\in C_{j}\times\{j\}$. This implies $i\neq kq+1$ for all $1\leq k\leq j$. Let $p$ be the least positive integer such that $i\leq pq$. Therefore, $i>(p-1)q$. Since $i\neq (p-1)q+1$, we have $(i,j)\in J_{p,j}$.
\end{proof}
\subsection{Construction of $e^{k}(i_{j},j)$}\label{construction of e^{k}}
For a fixed $1\leq j\leq r$, fix an  $i_{j}\in C_{j}$ arbitrarily. From \cref{remark 1}, we have, for every $1\leq j\leq r$, there exists a unique integer $ d(i_{j},j)\in\{1,2,\ldots,j\}$ such that $(i_{j},j)\in J_{d(i_{j},j),j}$. 
\begin{align*}
	\text{Define } & e^{0}(i_{j},j):=j.
\end{align*}
By definition of $e^{0}(i_{j},j)$, there is a non negative integer $k$ such that $e^{k}(i_{j},j)\in\mathbb{Z}_{\geq0}$. So, let $m(i_{j},j)$ be the largest non negative integer for which $e^{m(i_{j},j)}(i_{j},j)$ is defined and non negative integer. Now, we iteratively define $e^{k}(i_{j},j)$ for every $0\leq k\leq m(i_{j},j)$, 
\begin{align}\label{Special eq 6}
	\text{}& e^{k}(i_{j},j):= d(i_{_{e^{k-1}(i_{j},j)}},e^{k-1}(i_{j},j))-1.	
\end{align}
Thus, for every $0\leq k\leq m(i_{j},j)$, we have a map $e^{k}:C_{j}\times\{j\}\longrightarrow[0,j]\cap\mathbb{Z}_{\geq 0}$. In the following Lemma , we prove that for every $(i_{j},j)$, $e^{k}(i_{j},j)$ is a decreasing function on $k$.
 \begin{lemma}\label{lemma 2.1}
 	 We have $e^{k}(i_{j},j)<e^{k-1}(i_{j},j)$.
 \end{lemma}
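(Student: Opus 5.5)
The inequality reduces to a single observation: for any $1\leq l\leq r$ and any $i_{l}\in C_{l}$, the integer $d(i_{l},l)$ satisfies $d(i_{l},l)\leq l$. This holds because, by \cref{remark 1}, $C_{l}\times\{l\}=\bigsqcup_{p=1}^{l}J_{p,l}$, and $d(i_{l},l)$ is defined as the unique index $p\in\{1,\ldots,l\}$ with $(i_{l},l)\in J_{p,l}$. Consequently $d(i_{l},l)-1\leq l-1<l$.

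The plan is to apply this observation with $l=e^{k-1}(i_{j},j)$, for $1\leq k\leq m(i_{j},j)$. The recursion \cref{Special eq 6} is only meaningful when $e^{k-1}(i_{j},j)$ is a positive integer with $e^{k-1}(i_{j},j)\leq r$, since only then is $C_{e^{k-1}(i_{j},j)}$ defined and $i_{e^{k-1}(i_{j},j)}$ a chosen element of it. We set $l:=e^{k-1}(i_{j},j)$ and need $1\leq l\leq j\leq r$. Then $e^{k}(i_{j},j)=d(i_{l},l)-1\leq l-1<l=e^{k-1}(i_{j},j)$, which is the claim.

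It remains to justify that $1\leq e^{k-1}(i_{j},j)\leq j$ whenever $e^{k}(i_{j},j)$ is defined. I would prove this by induction on $k$ together with the inequality itself. For the base case, $e^{0}(i_{j},j)=j$ with $1\leq j\leq r$. For the inductive step, assume $1\leq e^{k-1}(i_{j},j)\leq j$. Then $e^{k}(i_{j},j)=d(i_{l},l)-1$ lies in $[0,l-1]\subseteq[0,j-1]$. If $e^{k}(i_{j},j)=0$, the sequence stops: no further term is defined, because $C_{0}$ is empty or meaningless, so $k=m(i_{j},j)$. Otherwise $e^{k}(i_{j},j)\geq 1$ and the hypothesis propagates to the next step. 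This also shows that the $e^{k}$ take values in $[0,j]\cap\mathbb{Z}_{\geq 0}$ and that $m(i_{j},j)$ is finite, at most $j$.

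The main obstacle is purely one of interpreting the definition. The recursion is stated somewhat loosely: $i_{e^{k-1}}$ refers to the fixed choice $i_{l}\in C_{l}$ made for each $l$ at the start of \cref{construction of e^{k}}, and the stopping rule is encoded in $m(i_{j},j)$. Once it is made explicit that $e^{k}$ is only formed from a positive $e^{k-1}$, the strict decrease is immediate from $d\leq l$.
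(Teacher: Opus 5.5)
Your proof is correct and matches the paper's argument: the paper likewise writes $e^{k}(i_{j},j)=d(i_{e^{k-1}(i_{j},j)},e^{k-1}(i_{j},j))-1$ and uses $d(i_{l},l)\leq l$ (since $(i_{l},l)\in J_{p,l}$ for some $p\leq l$) to get $e^{k}(i_{j},j)\leq e^{k-1}(i_{j},j)-1$. Your extra induction, showing that $1\leq e^{k-1}(i_{j},j)\leq j$ whenever $e^{k}(i_{j},j)$ is formed, makes explicit the well-definedness the paper leaves implicit; it does not change the approach.
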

\begin{proof}
	 From the defintion of $e^{k}(i_{j},j)$, we have $e^{k}(i_{j},j)= d(i_{_{e^{k-1}(i_{j},j)}},e^{k-1}(i_{j},j))-1$. Since $d(i_{_{e^{k-1}(i_{j},j)}},e^{k-1}(i_{j},j))\leq e^{k-1}(i_{j},j)$, we get $e^{k}(i_{j},j)\leq e^{k-1}(i_{j},j)-1$.
\end{proof}
\begin{lemma}\label{e^{m}=0}
	For every $(i_{j},j)$, we have $e^{m(i_{j},j)}(i_{j},j)=0$.
\end{lemma}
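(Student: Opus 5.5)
We argue directly from the recursive definition \eqref{Special eq 6} together with \cref{lemma 2.1}. Fix $(i_{j},j)$ with $1\leq j\leq r$, $i_{j}\in C_{j}$, and throughout regard the choice of $i_{l}\in C_{l}$ for each $1\leq l\leq j$ as fixed, since the recursion uses $i_{e^{k-1}(i_{j},j)}$. The key observation is that $e^{k}(i_{j},j)$ can be defined at step $k$ exactly when $e^{k-1}(i_{j},j)\geq 1$. Indeed, the integer $d(i_{l},l)$ is defined only when $1\leq l\leq r$, since it requires $(i_{l},l)\in C_{l}\times\{l\}$ and $C_{l}$ is defined only for $l\geq 1$. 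For such $l$, $d(i_{l},l)\in\{1,\ldots,l\}$ by \cref{remark 1}, so $d(i_{l},l)-1\in\{0,\ldots,l-1\}$ is a non negative integer.

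First, the sequence terminates. By \cref{lemma 2.1}, $e^{0}(i_{j},j)=j>e^{1}(i_{j},j)>\cdots$ is a strictly decreasing sequence of non negative integers, so it has at most $j+1$ terms. Hence $m(i_{j},j)$ is finite and satisfies $m(i_{j},j)\leq j$.

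Second, the last term is $0$. Suppose, for contradiction, that $e:=e^{m(i_{j},j)}(i_{j},j)\geq 1$. Since $e\leq j\leq r$, the element $i_{e}\in C_{e}$ is available, and $d(i_{e},e)\in\{1,\ldots,e\}$ is defined by \cref{remark 1}. Then
\begin{align*}
e^{m(i_{j},j)+1}(i_{j},j)=d(i_{e},e)-1\in\{0,\ldots,e-1\}
\end{align*}
is defined and is a non negative integer. This contradicts the maximality of $m(i_{j},j)$. Therefore $e^{m(i_{j},j)}(i_{j},j)=0$.

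The main obstacle is interpretive rather than computational: the paper's definition of $m(i_{j},j)$ is informal. One must read the recursion as stopping precisely when the previous value is $0$, because $C_{0}$ and $J_{p,0}$ are undefined, and one must check that for every value $1\leq e\leq j$ a point $i_{e}\in C_{e}$ has been chosen. With that reading made explicit, both steps are immediate from \cref{remark 1} and \cref{lemma 2.1}.
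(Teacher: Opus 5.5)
Your proof is correct and follows the paper's argument. Assuming $e^{m(i_{j},j)}(i_{j},j)\geq 1$, the next iterate $d(i_{e},e)-1$ is a well-defined non negative integer, which contradicts the maximality of $m(i_{j},j)$. Your extra step uses \cref{lemma 2.1} to show that $m(i_{j},j)$ exists and is at most $j$; the paper leaves this finiteness implicit, so this only tidies up the definition and does not change the approach.
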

\begin{proof}
	Recall that $m(i_{j},j)$ be the largest non negative integer such that $e^{m(i_{j},j)}(i_{j},j)\geq 0$. We prove that $e^{m(i_{j},j)}(i_{j},j)=0$. Assume on the contrary that $e^{m(i_{j},j)}(i_{j},j)\geq 1$. Then, we have \begin{align}\label{eq 1.1}
	e^{m(i_{j},j)+1}(i_{j},j)=d(i_{e^{m(i_{j},j)}(i_{j},j)},e^{m(i_{j},j)}(i_{j},j))-1.
\end{align}
Since $e^{m(i_{j},j)}(i_{j},j)\geq 1$, we have $d(i_{e^{m(i_{j},j)}(i_{j},j)},e^{m(i_{j},j)}(i_{j},j))\geq 1$. Thus from \cref{eq 1.1}, we have $e^{m(i_{j},j)+1}(i_{j},j)\geq 1-1=0$. This contradicts the maximality of $m(i_{j},j)$. Hence, $e^{m(i_{j},j)}(i_{j},j)=0$.
\end{proof}
Since $e^{0}(i_{j},j)=j\geq 1$ and $e^{m(i_{j},j)}(i_{j},j)=0$, we have $m(i_{j},j)>0$.
\begin{lemma}\label{claim 1}
	Fix $1\leq j\leq r$. Then for every $1\leq l\leq r$, we have 
	\begin{align*}
		\langle \beta_{i_{j},j},\lambda_{lq}\rangle&=\begin{dcases}
			1&\text{ if }d(i_{j},j)\leq l\leq j\\
			0&\text{ otherwise} 
		\end{dcases}
	\end{align*}
\end{lemma}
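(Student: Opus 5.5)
We compute the pairing directly from the definitions, with no geometry needed. By definition $\beta_{i_{j},j}=\sum_{k=i_{j}}^{jq}\alpha_{k}$, and the one parameter subgroups $\lambda_{m}$ are dual to the simple roots, so $\langle\alpha_{k},\lambda_{m}\rangle=\delta_{k,m}$. By linearity,
\begin{align*}
\langle \beta_{i_{j},j},\lambda_{lq}\rangle=\sum_{k=i_{j}}^{jq}\delta_{k,lq},
\end{align*}
which equals $1$ if $i_{j}\leq lq\leq jq$ and $0$ otherwise. The proof therefore reduces to the following claim: for $1\leq l\leq r$, the inequalities $i_{j}\leq lq\leq jq$ hold if and only if $d(i_{j},j)\leq l\leq j$.

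The condition $lq\leq jq$ is equivalent to $l\leq j$, since $q\geq 1$. So it remains to show that, for $l\leq j$, we have $i_{j}\leq lq$ if and only if $d(i_{j},j)\leq l$. Set $p=d(i_{j},j)$. By \cref{remark 1}, $(i_{j},j)\in J_{p,j}$, and the definition of $J_{p,j}$ gives the bounds
\begin{align*}
(p-1)q+1\leq i_{j}\leq pq .
\end{align*}
For $p=1$ the lower bound is $i_{j}\geq 1$. For $p\geq 2$ it is $i_{j}\geq (p-1)q+2$, which is even stronger.

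Both directions of the equivalence now follow from these bounds. If $l\geq p$, then $i_{j}\leq pq\leq lq$. If $l\leq p-1$, then $lq\leq (p-1)q<i_{j}$. Hence $i_{j}\leq lq$ holds exactly when $l\geq d(i_{j},j)$, which proves the claim and the lemma.

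The only point needing care is the role of the excluded indices $kq+1$, that is, checking that $i_{j}$ cannot sit on a boundary value that breaks the equivalence. This is already handled by \cref{remark 1}, which places $i_{j}$ strictly inside the block $((p-1)q,pq]$. I expect no real obstacle beyond this bookkeeping.
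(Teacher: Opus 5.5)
Your proof is correct and takes essentially the same route as the paper: both use the block bounds $(d(i_j,j)-1)q+1\leq i_j\leq d(i_j,j)q$, coming from $(i_j,j)\in J_{d(i_j,j),j}$, to show that $\alpha_{lq}$ occurs in $\beta_{i_j,j}$ exactly when $d(i_j,j)\leq l\leq j$. Where the paper invokes cominusculity of $\alpha_{lq}$ to get the value $1$, you use the direct computation $\langle\beta_{i_j,j},\lambda_{lq}\rangle=\sum_{k=i_j}^{jq}\delta_{k,lq}$, which says the same thing explicitly.
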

\begin{proof}
	We have $\beta_{i_{j},j}=\sum_{k=i_{j}}^{jq}\alpha_{k}$.
	Since $(i_{j},j)\in J_{d(i_{j},j),j}$,  we have
	\begin{align}\label{eq 1.13}
		(d(i_{j},j)-1)q+1\leq i_{j}\leq d(i_{j},j)q
	\end{align}
	Further, from \cref{eq 1.13}, and using $d(i_{j},j)\leq j$, we have $d(i_{j},j)$ is the smallest positive integer $m$ such that $\alpha_{mq}\leq \beta_{i_{j},j}$. From the definition of $\beta_{i_{j},j}$, we have $\alpha_{lq}\nleq \beta_{i_{j},j}$ for all $j+1\leq l\leq r$. Therefore, $\alpha_{lq}\leq \beta_{i_{j},j}$ if and only if $d(i_{j},j)\leq l\leq j$. Now the proof follows from the fact that $\alpha_{lq}$ is a cominuscule simple root.
\end{proof}
For every $1\leq j\leq r$, define \begin{align}\label{eq 1.14}
	\gamma_{j}:=\sum_{k=0}^{m(i_{j},j)-1}\beta_{i_{e^{k}(i_{j},j)},e^{k}(i_{j},j)}
\end{align} 
\begin{lemma}\label{claim 4}
	Let $1\leq j\leq r$. 
	For any $1\leq l\leq r$, we have \begin{align*}
		\langle \gamma_{j},\lambda_{lq}\rangle &=	\begin{dcases}
			1& \text{ if } 1\leq l\leq j\\
			0& \text{ if }j+1\leq  l\leq r
		\end{dcases}
	\end{align*}
\end{lemma}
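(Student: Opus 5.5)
By linearity of $\langle\cdot,\lambda_{lq}\rangle$ and the definition \cref{eq 1.14}, we have
\begin{align*}
\langle\gamma_{j},\lambda_{lq}\rangle=\sum_{k=0}^{m(i_{j},j)-1}\langle \beta_{i_{e^{k}(i_{j},j)},e^{k}(i_{j},j)},\lambda_{lq}\rangle .
\end{align*}
We evaluate each summand with \cref{claim 1}, applied to the pair $(i_{e^{k}},e^{k})$ in place of $(i_{j},j)$; this is legitimate because $1\leq e^{k}(i_{j},j)\leq j\leq r$ for $0\leq k\leq m(i_{j},j)-1$. For brevity write $e^{k}$ for $e^{k}(i_{j},j)$ and $d_{k}:=d(i_{e^{k}},e^{k})$. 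Then the $k$-th summand is $1$ exactly when $d_{k}\leq l\leq e^{k}$, and $0$ otherwise. By the recursion \cref{Special eq 6}, $d_{k}=e^{k+1}+1$, so the $k$-th summand is the indicator of the integer interval
\begin{align*}
I_{k}:=[\,e^{k+1}+1,\ e^{k}\,]\cap\mathbb{Z}.
\end{align*}

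The key step is to show that these intervals tile $[1,j]$. By \cref{lemma 2.1}, the sequence $e^{0}=j>e^{1}>\cdots>e^{m(i_{j},j)}$ is strictly decreasing, so each $I_{k}$ is nonempty. By \cref{e^{m}=0}, the sequence ends at $e^{m(i_{j},j)}=0$. Consecutive intervals $I_{k}$ and $I_{k+1}$ abut, since $I_{k}$ begins at $e^{k+1}+1$ and $I_{k+1}$ ends at $e^{k+1}$. Hence the intervals $I_{0},\ldots,I_{m(i_{j},j)-1}$ are pairwise disjoint and their union is $[e^{m(i_{j},j)}+1,e^{0}]=[1,j]$.

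It follows that for $1\leq l\leq j$ exactly one summand equals $1$, so $\langle\gamma_{j},\lambda_{lq}\rangle=1$. For $j+1\leq l\leq r$ no summand is nonzero, because every $I_{k}\subseteq[1,j]$, so $\langle\gamma_{j},\lambda_{lq}\rangle=0$.

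The only delicate point is the index bookkeeping: one must check that the $d$ appearing in \cref{claim 1} for the $k$-th term is exactly $e^{k+1}+1$, and that the sum in \cref{eq 1.14} stops at $m(i_{j},j)-1$, which is precisely where $e^{m(i_{j},j)}=0$ closes off the tiling at $l=1$.
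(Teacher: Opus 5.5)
Your proof is correct and is essentially the paper's argument. Like the paper, you evaluate each summand of $\gamma_j$ via \cref{claim 1} using $d(i_{e^{k}},e^{k})=e^{k+1}+1$, and you use \cref{lemma 2.1} and \cref{e^{m}=0} to partition $\{1,\ldots,j\}$ into the intervals $[e^{k+1}+1,e^{k}]$; the only difference is presentational, since the paper handles $l>j$ and $l\leq j$ as separate steps and checks the vanishing of the other summands by explicit inequalities, whereas you package this as a tiling by indicator intervals.
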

\begin{proof} Step 1: We prove that $\langle\gamma_{j},\lambda_{lq}\rangle=0$ for every integer $l$ such that $j+1\leq l\leq r$.
	
	Let $j+1\leq l\leq r$.
	Since $e^{0}(i_{j},j)=j$ and $e^{k}(i_{j},j)<e^{0}(i_{j},j)$ for all $1\leq k\leq m(i_{j},j)-1$, we have $l>e^{k}(i_{j},j)$ for $0\leq k\leq m(i_{j},j)-1$. Therefore, from \cref{claim 1}, we have  $\langle\beta_{i_{e^{k}(i_{j},j)},e^{k}(i_{j},j)},\lambda_{lq}\rangle=0$ for $0\leq k\leq m(i_{j},j)-1$. Hence, from \cref{eq 1.14}, we have $\langle\gamma_{j},\lambda_{lq}\rangle=0$.\\
	Step 2: We prove that $\langle\gamma_{j},\lambda_{lq}\rangle=1$, for every integer $l$ such that $1\leq l\leq j$.
	
	Let $1\leq l\leq j$. Since $e^{0}(i_{j},j)=j$, using \cref{lemma 2.1} and \cref{e^{m}=0}, we have 
	\begin{align*}
		\{1,2,\ldots,j\}=\sqcup_{k=0}^{m(i_{j},j)-1}\{m\in \mathbb{N}:e^{k+1}(i_{j},j)+1\leq m\leq e^{k}(i_{j},j)\}
	\end{align*}
	Let $k_{0}\in\{0,\ldots,m(i_{j},j)-1\}$ be the unique non negative integer such that \begin{align}\label{eq 1.12}
		e^{k_{0}+1}(i_{j},j)+1\leq l\leq e^{k_{0}}(i_{j},j)
	\end{align}
	Since 	$e^{k_{0}+1}(i_{j},j)+1=d(i_{e^{k_{0}}(i_{j},j)},e^{k_{0}}(i_{j},j))$, from \cref{claim 1}, we have $$\langle\beta_{i_{e^{k_{0}(i_{j},j)}},e^{k_{0}(i_{j},j)}},\lambda_{lq}\rangle=1.$$ Now we prove that $\langle\beta_{i_{e^{k}(i_{j},j)},e^{k}(i_{j},j)},\lambda_{lq}\rangle=0$ for $k\in\{0,\ldots,m(i_{j},j)-1\}\setminus\{k_{0}\}$.
	First assume that $k<k_{0}$. Then by \cref{lemma 2.1}, we have $$e^{k_{0}}(i_{j},j)<e^{k+1}(i_{j},j)+1=d(i_{e^{k}(i_{j},j),e^{k}(i_{j},j)}).$$ Therefore, from \cref{eq 1.12}, we have $l<d(i_{e^{k}(i_{j},j),e^{k}(i_{j},j)})$. Hence, using \cref{claim 1}, we have $\langle\beta_{i_{e^{k}(i_{j},j)},e^{k}(i_{j},j)},\lambda_{lq}\rangle=0$.\\
	Now assume that $k>k_{0}$. Then from \cref{lemma 2.1}, we have $e^{k}(i_{j},j)\leq e^{k_{0}+1}(i_{j},j)$. Therefore from \cref{eq 1.12}, we have $l>e^{k}(i_{j},j)$. Hence from \cref{claim 1}, we have $\langle\beta_{i_{e^{k}(i_{j},j)},e^{k}(i_{j},j)},\lambda_{lq}\rangle=0$. Thus, for $1\leq l\leq j$, we have $$\langle\gamma_{j},\lambda_{lq}\rangle=\langle \beta_{i_{e^{k_{0}}(i_{j},j)},e^{k_{0}}(i_{j},j)},\lambda_{lq}\rangle=1.$$
\end{proof}
\subsection{Construction of a $T_{J_{r}}$-invariant section}
Let $x=uwP/P$ with $u\in U_{w}$. Assume that for every $1\leq j\leq r$, there is an integer $i_{j}\in C_{j}$ such that $X_{\beta_{i_{j},j}}(u)\neq 0$. In this section, we prove that there exists $s\in H^{0}(X(w),\mathcal{L}(n\omega_{r}))^{T_{J_{r}}}$ such that $s(x)\neq 0$.
	\begin{lemma}\label{Special lemma 1}
		 For every $1\leq j\leq r$, fix such an integer $i_{j}\in C_{j}$. Let $\gamma_{j}=\beta_{i_{j},j}$. Then there exists a monomial $M_{r}$ in $X_{\gamma_{j}}$'s $(1\leq j\leq r)$ satisfying
		\begin{enumerate}
			\item $M_{r}\in \psi_{2}^{*}(H^{0}(X(w),\mathcal{L}((n-r)\omega_{r}))$
			\item the weight $\mu_r$ of the monomial $M_{r}$ satisfies $\langle\mu_r,\lambda_{jq}\rangle=-(n-r)$ for every $1\leq j\leq r$.
				\end{enumerate}
\end{lemma}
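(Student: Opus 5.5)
The plan is to produce $M_{r}$ as the $(n-r)$-th power of a single Plücker coordinate, restricted to the big cell. The weight condition then reduces to \cref{claim 4}, and the degree condition is automatic. Throughout I use the chain $j=e^{0}(i_{j},j)>e^{1}(i_{j},j)>\cdots>e^{m(i_{j},j)}(i_{j},j)=0$ from \cref{construction of e^{k}}, built from the fixed choices $i_{j}\in C_{j}$. I only need the chain starting at $j=r$, so write $e^{k}:=e^{k}(i_{r},r)$ and $m:=m(i_{r},r)$.

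\emph{Step 1 (weights of coordinates).} Since $tu_{\beta}(a)t^{-1}=u_{\beta}(\beta(t)a)$, the coordinate function $X_{\beta}$ on $U_{w}$ is a weight vector of weight $-\beta$ for the natural $T$-action on $\mathbb{C}[U_{w}]$. Hence the monomial
\begin{align*}
N:=\prod_{k=0}^{m-1}X_{\beta_{i_{e^{k}},e^{k}}}
\end{align*}
has weight $-\gamma_{r}$, where $\gamma_{r}$ is as in \cref{eq 1.14}. By \cref{claim 4}, $\langle -\gamma_{r},\lambda_{lq}\rangle=-1$ for every $1\leq l\leq r$. Setting $M_{r}:=N^{\,n-r}$ therefore gives weight $\mu_{r}=-(n-r)\gamma_{r}$ with $\langle\mu_{r},\lambda_{lq}\rangle=-(n-r)$ for all $l$, which is condition (2).

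\emph{Step 2 ($N$ is a Plücker coordinate).} Using the matrix model \cref{eq 1.0} of $C(w)$ and $\psi_{1}$, view $X_{\beta_{i,j}}$ as the entry $a_{i,j}$. Consider the $r\times r$ minor with the following rows:
\begin{itemize}
\item for each column $e^{k}$ with $0\leq k\leq m-1$, the row $i_{e^{k}}$;
\item for each column $c$ not in the chain, the pivot row $cq+1$.
\end{itemize}
I claim this minor equals $\pm N$ on $C(w)$. The chain condition $e^{k+1}=d(i_{e^{k}},e^{k})-1$ says that row $i_{e^{k}}$ lies in the block $J_{d,e^{k}}$ with $d=e^{k+1}+1$. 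So the chosen rows are separated by the pivot rows $(e^{k+1})q+1,\ldots$ in a way that makes the submatrix block-triangular. Concretely:
\begin{itemize}
\item in the columns $e^{k+1}+1,\ldots,e^{k}$, the non-chain columns contribute pivot entries $1$;
\item the remaining entries either vanish by condition (ii) of \cref{eq 1.0}, or lie on one side of the diagonal after a suitable row permutation.
\end{itemize}
Expanding the determinant along the pivot $1$'s, one block at a time, should leave exactly $\prod_{k}a_{i_{e^{k}},e^{k}}$ up to sign. I expect this combinatorial verification to be the main obstacle. I would first check it in the example $r=q=3$ and then argue by induction on $k$ along the chain, removing one interval $\{e^{k+1}+1,\ldots,e^{k}\}$ of columns at a time.

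\emph{Step 3 (membership).} Let $v\in W^{S\setminus\{\alpha_{r}\}}$ correspond to the row set of Step 2. Step 2 shows that $\psi_{2}^{*}(p_{v})=\pm N$ is nonzero on $C(w)$, hence $v\leq w$; alternatively this follows from comparing weights via \cref{remark 1.2}. Then $p_{v}^{\,n-r}\in H^{0}(X(w),\mathcal{L}((n-r)\omega_{r}))$ and $\psi_{2}^{*}(p_{v}^{\,n-r})=\pm M_{r}$. Replacing $p_{v}$ by $-p_{v}$ if necessary gives (1), which completes the proof. As a consistency check, \cref{remark 1.2} predicts the weight $(n-r)(w(\omega_{r})-v(\omega_{r}))$ for $\psi_{2}^{*}(p_{v}^{\,n-r})$, and this should match $-(n-r)\gamma_{r}$.
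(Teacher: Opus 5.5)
Your proposal is correct and follows essentially the paper's route: the paper's recursive sequence $r\geq k_{1}>k_{2}>\cdots>k_{l}=1$ is exactly your chain with $k_{s}=e^{s}(i_{r},r)+1$. Like you, it takes $M_{r}$ to be the $(n-r)$-th power of the restricted Plücker coordinate whose remaining rows are the pivot rows $cq+1$. The verification you flag as the main obstacle is in fact immediate: each pivot row $cq+1$ has a single nonzero entry (a $1$ in column $c$), so expanding along these rows leaves the minor $\bigl(a_{i_{e^{k'}},e^{k}}\bigr)_{k',k}$, which is triangular because $i_{e^{k'}}\geq e^{k}q+2$ for $k'<k$ forces $a_{i_{e^{k'}},e^{k}}=0$ by condition (ii) of \cref{eq 1.0}.
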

\begin{proof}
	Let $k_{1}$ be the least positive integer such that $i_{r}\leq k_{1}q$. If $k_{1}=1$, then $\alpha_{q}\leq \beta_{i_{r},r}$. Therefore, we take $M_{r}=X_{\gamma_{r}}^{n-r}$. Then, since $X_{\gamma_{r}}\in \psi_{2}^{*}(H^{0}(X(w),\mathcal{L}(\omega_{r})))$, we have $M_{r}\in \psi_{2}^{*}(H^{0}(X(w),\mathcal{L}((n-r)\omega_{r}))$. Furter, $\langle\mu_{r},\lambda_{jq}\rangle=-(n-r)$ for every $1\leq j\leq r$. Therefore, $M_{r}$ satisfies conditions $(1)$ and $(2)$.\\
	If $k_{1}\geq 2$, let $k_{2}$ be the least positive integer such that $i_{k_{1}-1}\leq k_{2}q$. Since $i_{k_{1}-1}\leq (k_{1}-1)q$, we have $k_{2}<k_{1}$.\\ If $k_{2}=1$, then $\alpha_{q}\leq \gamma_{k_{1}-1}$. Therefore, we take $M_{r}=(X_{\gamma_{r}}X_{\gamma_{k_{1}-1}})^{n-r}$. 
	To see that $X_{\gamma_{r}}X_{\gamma_{k_{1}-1}}\in \psi_{2}^{*}(H^{0}(X(w),\mathcal{L}(\omega_{r}))$, we note that, there are integers $a_{1},a_{2},\ldots,a_{r-2}\in \{1,2,\ldots,n\}\setminus\{i_{r},i_{k_{1}-1}\}$ and integers $b_{1},b_{2},\ldots,b_{r-2}\in\{1,2,\ldots,r\}\setminus\{r,k_{1}-1\}$ such that for every $i\in \{1,2,\ldots,r-2\}$, there exists a unique $b_{l(i)}$, $l(i)\in\{1,2,\ldots,r-2\}\setminus\{r,k_{1}-1\}$ such that $(a_{i},b_{j})^{th}$ entry is $\delta_{l(i),j}$. Then we have $\psi_{2}^{*}(\pm p_{\{a_{1},a_{2},\ldots,a_{r-2},i_{r},i_{k_{1}-1}\}\uparrow})=X_{\gamma_{r}}X_{\gamma_{k_{1}-1}}$.
	Therefore, $M_{r}$ satisfies conditions $(1)$ and $(2)$. \\
	If $k_{2}\geq 2$, let $k_{3}$ be the least positive integer such that $i_{k_{2}-1}\leq k_{3}q$. Since $i_{k_{2}-1}\leq (k_{2}-1)q$, we have $k_{3}<k_{2}$.\\
	If $k_{3}=1$, we take $M_{r}=(X_{\gamma_{r}}X_{\gamma_{k_{1}-1}}X_{\gamma_{k_{2}-1}})^{n-r}$, otherwise proceeding reursively, there is a decreasing sequence 
	$$r\geq k_{1}>k_{2}>k_{3}>\cdots>k_{l}=1$$
	such that $i_{k_{j}-1}\leq k_{j+1}q$ for every $1\leq j\leq l-1$. \\
	Now take $M_{r}=(X_{\gamma_{r}}X_{\gamma_{k_{1}-1}}X_{\gamma_{k_{2}-1}}\cdots X_{\gamma_{k_{l-1}-1}})^{n-r}$. 
	By a similar argument as in the proof of $X_{\gamma_{r}}X_{\gamma_{k_{1}-1}}\in \psi_{2}^{*}(H^{0}(X(w),\mathcal{L}(\omega_{r}))$, we can see that  $X_{\gamma_{r}}X_{\gamma_{k_{1}-1}}X_{\gamma_{k_{2}-1}}\cdots X_{\gamma_{k_{l-1}-1}}\in \psi_{2}^{*}(H^{0}(X(w),\mathcal{L}(\omega_{r})))$.
	Therefore $M_{r}\in \psi_{2}^{*}(H^{0}(X(w),\mathcal{L}((n-r)\omega_{r}))$ and the weight $\mu_{r}$ of $M_{r}$ satisfies $\langle\mu_{r},\lambda_{jq}\rangle=-(n-r)$ for every $1\leq j\leq r$. Therefore, $M_{r}$ satisfies conditions $(1)$ and $(2)$.
\end{proof}
\begin{lemma}\label{Special lemma 2} There is a monomial $M^{\prime}_{r-1}$ in $X_{\gamma_{j}}$'s $(1\leq j\leq r-1)$ such that 
	\begin{enumerate}
		\item $M^{\prime}_{r-1}\in \psi_{2}^{*}(H^{0}(X(w),\mathcal{L}(\omega_{r})))$ and
		\item the weight $\mu^{\prime}_{r-1}$ of $M^{\prime}_{r-1}$ satisfies $\langle\mu^{\prime}_{r-1},\lambda_{jq}\rangle=-1$ for every $1\leq j\leq r-1$ and $\langle\mu^{\prime}_{r-1},\lambda_{rq}\rangle=0$
	\end{enumerate}
\end{lemma}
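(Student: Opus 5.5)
We build $M'_{r-1}$ exactly as $M_r$ was built in \cref{Special lemma 1}, starting the descending chain at column $r-1$ instead of column $r$, and we do not raise the result to the power $n-r$.

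\emph{Construction of the chain.} Recall $\gamma_j=\beta_{i_j,j}$, where $i_j\in C_j$ is the fixed index with $X_{\beta_{i_j,j}}(u)\neq0$.

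Let $k_1=d(i_{r-1},r-1)$ be the least positive integer with $i_{r-1}\le k_1q$. If $k_1\ge2$, let $k_2=d(i_{k_1-1},k_1-1)$, and continue in this way. Since $i_{k_j-1}\le (k_j-1)q$, we get $k_{j+1}<k_j$, so the process stops. This yields a strictly decreasing chain
\[
r-1\ge k_1>k_2>\cdots>k_l=1.
\]
In the notation of \cref{construction of e^{k}}, this is just the sequence $e^{0}(i_{r-1},r-1)=r-1$, $e^{1}=k_1-1$, $e^{2}=k_2-1,\ldots$, terminating at $e^{m}=0$.

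We then set
\[
M'_{r-1}=X_{\gamma_{r-1}}X_{\gamma_{k_1-1}}\cdots X_{\gamma_{k_{l-1}-1}},
\]
which is the monomial whose total root is the $\gamma$ of \cref{eq 1.14} for $j=r-1$.

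\emph{Weight condition (2).} By \cref{remark 1.2}, each $X_{\beta}$ has weight $-\beta$, so the weight of $M'_{r-1}$ is $\mu'_{r-1}=-\sum_k\beta_{i_{e^k},e^k}$. Applying \cref{claim 4} with $j=r-1$ gives
\[
\langle\mu'_{r-1},\lambda_{lq}\rangle=-1 \text{ for } 1\le l\le r-1,\qquad \langle\mu'_{r-1},\lambda_{rq}\rangle=0 .
\]
This is condition (2). Alternatively it follows directly from \cref{claim 1}: the intervals $[k_{j+1},k_j-1]$ partition $\{1,\ldots,r-1\}$.

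\emph{Membership condition (1).} We must show $M'_{r-1}=\pm\psi_2^*(p_\tau)$ for some Plücker coordinate $p_\tau$ with $\tau\le w$. We work in the matrix model \cref{eq 1.0}, where column $j$ has its pivot $1$ in row $jq+1$, and $X_{\gamma_j}$ is the entry $a_{i_j,j}$.

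Let $S=\{r-1,k_1-1,\ldots,k_{l-1}-1\}$ be the set of columns used. Take $\tau$ to consist of the rows $i_c$ for $c\in S$, together with the pivot rows $cq+1$ for $c\notin S$. Note that column $r$ is not in $S$, so we use its pivot row $rq+1$; this is what forces the $\lambda_{rq}$-pairing to vanish.

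In the $r\times r$ minor on these rows, the columns $c\notin S$ have a $1$ in their own pivot row. We need the entries of the other columns in those pivot rows to vanish, so that expanding along the pivot rows leaves only the $S\times S$ minor on the rows $\{i_c\}$. This holds by condition (ii) of \cref{eq 1.0}, together with $a_{jq+1,k}=\delta_{j,k}$ from (i).

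It remains to see that the $S\times S$ minor is $\pm\prod_{c\in S}a_{i_c,c}$. Order $S$ decreasingly as $c_0=r-1>c_1=k_1-1>\cdots$. Row $i_{c_t}$ lies in block $J_{k_{t+1},c_t}$, so $i_{c_t}\le k_{t+1}q=(c_{t+1}+1)q$. Rows $i>c q+1$ have zero entry in column $c$, by (ii). Hence row $i_{c_t}$ can be nonzero only in columns $c\ge k_{t+1}-1=c_{t+1}$. Similarly, each column $c_s$ has zero entries in the rows $i_{c_t}$ with $t<s$. This makes the minor triangular up to ordering, so its determinant is the product of its diagonal entries.

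\emph{Main obstacle.} The delicate step is this triangularity check in condition (1), in particular the claim that the entry of row $i_{c_t}$ in column $c_{t+1}$ does not spoil the determinant. We handle it the same way as the ``similar argument'' in \cref{Special lemma 1}. The key inequality is
\[
(c_{t+2}+1)q<i_{c_t}\le (c_{t+1}+1)q ,
\]
which comes from the choice of each $k_{t+1}$ as the least integer with $i_{c_t}\le k_{t+1}q$. It places row $i_{c_t}$ strictly below the support of every column with index less than $c_{t+1}$. Ordering the rows and columns of $S$ accordingly makes the $S\times S$ minor lower triangular, with diagonal $\prod_{c\in S}a_{i_c,c}$. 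Finally, $\tau\le w$ holds automatically, since the minor is nonzero on the cell.
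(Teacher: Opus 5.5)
Your chain $k_1>k_2>\cdots>k_l=1$, the monomial $M'_{r-1}$ and the weight computation via \cref{claim 4} (or directly via \cref{claim 1}) match the paper. Your choice of $\tau$, namely the rows $i_c$ for $c\in S$ together with the pivot rows $cq+1$ for $c\notin S$, is the Plücker coordinate the paper has in mind. The problem is the triangularity step, which is exactly the point the paper leaves to the reader and which you flag as the main obstacle.

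Your key inequality $(c_{t+2}+1)q<i_{c_t}$ only shows that row $i_{c_t}$ vanishes in columns $c_{t+2},c_{t+3},\ldots$. Order rows and columns by $c_0>c_1>\cdots$. Then the $S\times S$ matrix is only lower Hessenberg, not triangular. The superdiagonal entry $a_{i_{c_t},c_{t+1}}$ is left open, and a nonzero value there \emph{would} spoil the determinant. The reason is that $a_{i_{c_{t+1}},c_t}$ is a free coordinate, since $i_{c_{t+1}}\le c_{t+1}q<c_tq$ is not a pivot row. Already for $|S|=2$ the minor would be $a_{i_{c_0},c_0}a_{i_{c_1},c_1}-a_{i_{c_0},c_1}a_{i_{c_1},c_0}$. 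Your earlier deduction has a related flaw: you use the upper bound $i_{c_t}\le k_{t+1}q$ to conclude that row $i_{c_t}$ is nonzero only in columns $c\ge c_{t+1}$. An upper bound on the row index cannot force the column index up, so that inference runs in the wrong direction.

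What you need, and what is true, is that the superdiagonal entry vanishes. Minimality of $k_{t+1}$ gives $i_{c_t}>(k_{t+1}-1)q=c_{t+1}q$. Since $i_{c_t}\in C_{c_t}$ and $c_{t+1}<c_t$, the row $i_{c_t}$ is not the pivot row $c_{t+1}q+1$. Hence
\[
c_{t+1}q+2\le i_{c_t}\le (c_{t+1}+1)q .
\]
For every $s>t$ we have $c_s\le c_{t+1}\le r-1$, so condition (ii) in the description of $C(w)$ gives $a_{i_{c_t},c_s}=0$. This includes $s=t+1$. The $S\times S$ minor is therefore genuinely lower triangular, with determinant $\prod_{c\in S}a_{i_c,c}$. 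Combined with your expansion along the pivot rows of the columns not in $S$ (which uses only condition (i)), this gives $M'_{r-1}=\pm\psi_2^*(p_\tau)$ and completes condition (1).
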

\begin{proof}
Proof is similar to that of the construction of $M_{r}$ in \cref{Special lemma 1}. Let $k_{1}$ be the least positive integer such that $i_{r-1}\leq k_{1}q$. If $k_{1}=1$, we take $M^{\prime}_{r-1}=X_{\gamma_{r-1}}$. Clearly, $M^{\prime}_{r-1}$ satisfies conditions $(1)$ and $(2)$. Otherwise, let $k_{2}$ be the least positive integer such that $i_{k_{1}-1}\leq k_{2}q$. If $k_{2}=1$, take $M^{\prime}_{r-1}=X_{\gamma_{r-1}}X_{\gamma_{k_{1}-1}}$. Therefore, $M^{\prime}_{r-1}$ satisfies conditions $(1)$ and $(2)$.\\
If $k_{2}\geq 2$, proceeding recursively, there is a decreasing sequence
$$ r-1\geq k_{1}>k_{2}>\cdots>k_{l}=1$$
such that $i_{k_{j}-1}\leq k_{j+1}q$ for every $1\leq j\leq l-1$. Hence, we have $\alpha_{q}\leq \gamma_{k_{l-1}-1}$. \\
Now take $M^{\prime}_{r-1}=X_{\gamma_{r-1}}X_{\gamma_{k_{1}-1}}\cdots X_{\gamma_{k_{l-1}-1}}$. Then, by a similar argument as in the proof of \cref{Special lemma 1}, we can see $M^{\prime}_{r-1}$ satisfies conditions $(1)$ and $(2)$.
\end{proof}
In view of  \cref{sction:combinatorial resul:2} $X(w)^{ss}_{T_{J_{r}}}(\mathcal{L}(n\omega_{r}))\subseteq U_{w}wP/P$. The following Theorem describes the semistable locus.
\begin{theorem}\label{semistability criterion}
		Let $u\in U_{w}$. Then $uwP/P\in X(w)^{ss}_{T_{J_{r}}}(\mathcal{L}(n\omega_{r}))$ if and only if for every $1\leq j\leq r$, $X_{\beta_{i,j}}(u)\neq 0$ for some $i\in C_{j}$.
\end{theorem}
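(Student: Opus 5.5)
We prove the two implications separately. Throughout, $x=uwP/P$ with $u\in U_{w}$; this loses no generality, since by \cref{sction:combinatorial resul:2} the semistable locus lies in $U_{w}wP/P$.

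\emph{Sufficiency.} Assume that for every $1\leq j\leq r$ there is $i_{j}\in C_{j}$ with $X_{\beta_{i_{j},j}}(u)\neq 0$. Put $\gamma_{j}=\beta_{i_{j},j}$. By \cref{Special lemma 1} there is a monomial $M_{r}$ in the $X_{\gamma_{j}}$ with $M_{r}\in\psi_{2}^{*}(H^{0}(X(w),\mathcal{L}((n-r)\omega_{r})))$ and weight $\mu_{r}$ satisfying $\langle\mu_{r},\lambda_{jq}\rangle=-(n-r)$ for all $j$. The proof of \cref{Special lemma 2} runs verbatim for each truncation level $k<r$. It gives monomials $M'_{k}$ for $1\leq k\leq r-1$, each in $\psi_{2}^{*}(H^{0}(X(w),\mathcal{L}(\omega_{r})))$, with weight pairing $-1$ against $\lambda_{jq}$ for $j\leq k$ and $0$ for $j>k$. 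Then the product
\[
M=M_{r}\cdot\prod_{k=1}^{r-1}M'_{k}
\]
lies in $\psi_{2}^{*}(H^{0}(X(w),\mathcal{L}(n\omega_{r})))$, because products of sections correspond to sections of the tensor product. For each $l$, its weight pairs with $\lambda_{lq}$ to
\[
-(n-r)-(r-l)=-(n-l).
\]
Since every factor is a product of the nonvanishing coordinates $X_{\gamma_{j}}(u)$, we have $M(u)\neq 0$.

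It remains to check $T_{J_{r}}$-invariance. By the twisted action \eqref{eq 1.1.1} and \cref{remark 1.2}, the $\hat{T}$-weight of the corresponding section $s$ equals the weight of $M$ shifted by $-w(n\omega_{r})$. By \cref{Lemma 4.1}, $\langle w(n\omega_{r}),\lambda_{lq}\rangle=-(n-l)$. The shifted weight therefore pairs to zero with every $\lambda_{lq}$, so $s$ is $T_{J_{r}}$-invariant and $s(x)\neq 0$. Hence $x$ is semistable. One point needs care here: the sign and normalization conventions in \eqref{eq 1.1.1} must be tracked so that ``weight of $M$'' and ``weight of $s$'' differ by exactly $w(n\omega_{r})$. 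If necessary, one can alternatively use the $\gamma_{j}$ of \eqref{eq 1.14} together with \cref{claim 4} to organize the count $1$ per $l\leq j$.

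\emph{Necessity.} Suppose that for some $j_{0}$ we have $X_{\beta_{i,j_{0}}}(u)=0$ for all $i\in C_{j_{0}}$. We exhibit a one parameter subgroup $\lambda$ of $T_{J_{r}}$ with $\mu^{\mathcal{L}}(x,\lambda)<0$, which contradicts \cref{HM theprem for G}. Take
\[
\lambda=-\lambda_{j_{0}q}+c\sum_{l<j_{0}}\lambda_{lq}
\]
for a suitable choice of $c$, or a simpler variant. By \cref{claim 1}, the coordinates $X_{\beta_{i,j}}$ that survive all have a controlled pairing with $\lambda_{j_{0}q}$: any $\beta_{i,j}$ with $\langle\beta_{i,j},\lambda_{j_{0}q}\rangle=1$ must have $j>j_{0}$ and $d(i,j)\leq j_{0}$. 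More directly, when column $j_{0}$ of the matrix in \eqref{eq 1.0} vanishes off its pivot, the point $x$ lies in a $T$-stable translate of a smaller Schubert cell. Concretely, $x\in vB^{-}$-type position, so \cref{variation of Seshadri's lemma} applies with a smaller $v<w$ in the relevant sense. Alternatively one computes the Plücker coordinates $p_{\tau}(x)\neq 0$ explicitly and shows that all such $\tau$ satisfy $\langle\tau(\omega_{r}),\lambda_{j_{0}q}\rangle$ strictly on one side of the threshold $(n-j_{0})/n$. This uses the row count: entries in rows $\leq j_{0}q$ come only from columns $\neq j_{0}$, of which at most $j_{0}-1$ can reach those rows. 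The point then fails to be semistable for $\lambda_{j_{0}q}(\mathbb{G}_{m})$ alone, in the spirit of \cref{section:combinatorial result:1}.

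I expect this necessity step to be the main obstacle. One must make the Plücker/row-counting estimate precise: vanishing of column $j_{0}$ in rows $\leq j_{0}q$ forces every nonzero Plücker coordinate to use at most $j_{0}-1$ indices from $\{1,\ldots,j_{0}q\}$. That gives $\mu^{\mathcal{L}(n\omega_{r})}(x,-\lambda_{j_{0}q})<0$, by the computation $\lfloor j_{0}qr/n\rfloor=j_{0}-1$ from the proof of \cref{section:combinatorial result:1}.
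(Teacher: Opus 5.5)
Your sufficiency argument is the paper's: the monomials of \cref{Special lemma 1}, \cref{Special lemma 2} and their level-$k$ analogues, combined with \cref{Lemma 4.1}. There is one bookkeeping slip. $M_r\prod_{k=1}^{r-1}M'_k$ has degree $(n-r)+(r-1)=n-1$, not $n$. You must first multiply the corresponding section by $p_w$, which restricts to $1$ on $U_wwP/P$, as the paper does; only then is the shift by $w(n\omega_r)$ the right one.

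The genuine gap is in necessity when $j_0<r$. Your row count is false there. The columns $l>j_0$ in \eqref{eq 1.0} have free entries in every non-pivot row $\le lq$, in particular in rows $\le j_0q$. So vanishing of column $j_0$ does not force $|I\cap[1,j_0q]|\le j_0-1$ for the nonzero Plücker coordinates $p_I(x)$.

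Worse, such points can be semistable for $\lambda_{j_0q}(\mathbb{G}_m)$. Take $r=q=2$, $n=5$, $j_0=1$, with column $1$ equal to $e_3$ and column $2$ equal to $e_1+e_5$. Then $p_{\{1,3\}}(x)$ and $p_{\{3,5\}}(x)$ are nonzero. Writing $\tau$ for the element of $W^{S\setminus\{\alpha_r\}}$ attached to $I$, they have $\langle\tau(5\omega_2),\lambda_2\rangle$ equal to $1$ and $-4$ respectively. So by \cref{HM theorem}, $x$ is $\lambda_2(\mathbb{G}_m)$-semistable.

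Hence no argument using $\pm\lambda_{j_0q}$ alone can work, and your candidate $-\lambda_{j_0q}+c\sum_{l<j_0}\lambda_{lq}$ is just $-\lambda_q$ here. In fact no $a\lambda_2+b\lambda_4$ with $ab\ge 0$ destabilizes this point. So the $B^-$ route through \cref{variation of Seshadri's lemma} with a (anti)dominant $\lambda$ is also unavailable. Your argument is correct only for $j_0=r$, which is the paper's Case 1: there, only the $r-1$ columns $l<r$ reach rows $\le rq$.

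The missing idea is to pair $\lambda_{j_0q}$ with the next peak, using a $\lambda$ with coefficients of opposite signs. The paper takes $\lambda=-(n-1)\lambda_{j_0q}+n\lambda_{(j_0+1)q}$.

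If $\alpha_{j_0q}\le\beta_{i,l}$ with $l\neq j_0$, then $l>j_0$, so $\alpha_{(j_0+1)q}\le\beta_{i,l}$ as well. Hence every surviving root pairs with $\lambda$ to $0$, $1$ or $n$. The limit $\lim_{t\to0}\lambda(t)x=u_1wP/P$ therefore exists, with $u_1$ centralizing $\lambda$. By \cref{Lemma 4.1},
\[
\mu^{\mathcal{L}(n\omega_r)}(x,\lambda)=-\langle w(n\omega_r),\lambda\rangle=-j_0<0.
\]

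In your Plücker bookkeeping, set $m_k=|I\cap[1,k]|$. Then
\[
\langle\tau(n\omega_r),\lambda\rangle=n^2\bigl(m_{(j_0+1)q}-m_{j_0q}\bigr)+n\,m_{j_0q}-(n-1)(n+j_0).
\]
Every $I$ with $p_I(x)\neq0$ contains the pivot row $j_0q+1$, because column $j_0$ is $e_{j_0q+1}$; hence $m_{(j_0+1)q}>m_{j_0q}$. Also $I\le w$ gives $m_{j_0q}\ge j_0-1$. So all these weights are $\ge j_0>0$.

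The relevant information is that $I$ always meets $(j_0q,(j_0+1)q]$, not any bound on $I\cap[1,j_0q]$.
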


\begin{proof}
	($\Leftarrow:$) Let $x=uwP/P\in X(w)$. Assume that, for every $1\leq j\leq r$, there exists $i_{j}\in C_{j}$  such that $X_{\beta_{i_{j},j}}(u)\neq 0$. Fix such an $i_{j}\in C_{j}$.\\
		By arguments as in the \cref{Special lemma 1} and \cref{Special lemma 2}, for every $1\leq j\leq r-2$, there is a monomial $M^{\prime}_{j}$ in $X_{\gamma_{i}}$'s $(1\leq i\leq j)$ such that $M^{\prime}_{j}\in \psi_{2}^{*}(H^{0}(X(w),\mathcal{L}(\omega_{r})))$ and the weight $\mu^{\prime}_{j}$ of $M^{\prime}_{j}$ satisfies
		\begin{align}\label{Special eq 3}
			\langle\mu^{\prime}_{j},\lambda_{lq}\rangle&=\begin{dcases}
				-1&\text{ for $1\leq l\leq j$}\\
				0&\text{ for $j+1\leq l\leq r$}. 
			\end{dcases}
		\end{align}
	Let $M_{1}=M_{r}(\prod_{j=1}^{r-1}M^{\prime}_{j})$. Therefore, by \cref{Special lemma 1}, \cref{Special lemma 2}, and by \cref{Special eq 3}, we have $M_{1}\in\psi_{2}^{*}(H^{0}(X(w),\mathcal{L}((n-1)\omega_{r})))$ and the weight $\mu$ of $M_{1}$ satisfies 
	\begin{align}\label{Special eq 1}
		\langle\mu,\lambda_{jq}\rangle=-(n-j) \text{for every $1\leq j\leq r$}
	\end{align}
From \cref{remark 1.2}, we have for every $\tau\leq w$ in $W^{S\setminus\{\alpha_{r}\}}$, the weight of $p_{\tau}$ is equal to the weight of $\psi_{2}^{*}(p_{\tau})-w(\omega_{r})$. Let $s_{1}\in H^{0}(X(w),\mathcal{L}((n-1)\omega_{r}))$ be such that $\psi_{2}^{*}(s_{1})=M_{1}$. Hence by \cref{Lemma 4.1} and \cref{Special eq 1}, $(s_{1}\cdot p_{w})\in H^{0}(X(w),\mathcal{L}(n\omega_{r}))^{T_{J_{r}}}$ such that $(s_{1}\cdot p_{w})(x)\neq 0$. Therefore, $x\in X(w)^{ss}_{T_{J_{r}}}(\mathcal{L}(n\omega_{r}))$.
(:$\Rightarrow$)	Let $u\in U_{w}$ be such that $x=uwP/P\in X(w)^{ss}_{T_{J_{r}}}(\mathcal{L}(n\omega_{r}))$. Assume on the contrary that, there exists $j\in\{1,2,\ldots,r\}$ such that $X_{\alpha_{i,j}}(u)=0$ for every $i\in\{j,j+1,\ldots,jq\}$ (see \cref{Special eq 2}).\\
	{\it{Case 1}}: $j=r$. Let $w_{1}=(s_{q}\cdots s_{1})\cdots (s_{(r-1)q}\cdots s_{r-1})$. Then $w=w_{1}(s_{rq}\cdots s_{r})$ and $R^{+}(w^{-1})=R^{+}(w_{1}^{-1})\sqcup w_{1}R^{+}((s_{rq}\cdots s_{r})^{-1})$. Therefore, $x\in \prod_{\beta\in R^{+}(w_{1}^{-1})}U_{\beta}wP/P$, by hypothesis. Hence, $w_{1}^{-1}x\in U^{-}(s_{rq}\cdots s_{r})P/P$. Since $w_{1}^{-1}\lambda_{rq}=\lambda_{rq}$, using \cref{variation of Seshadri's lemma}, we have 
	\begin{align}\label{Special eq 5}
		\mu^{\mathcal{L}(n\omega_{r})}(x,-\lambda_{rq})=\langle (s_{rq}\cdots s_{r})(n\omega_{r}),\lambda_{rq}\rangle.
	\end{align}
From \cite[Lemma 5.8]{GKgit}, $X(s_{rq}\cdots s_{r})$, is the unique minimal dimensional Schubert variety in $G/P$ admitting semistable points for $\lambda_{rq}(\mathbb{G}_{m})$-linearized line bundle $\mathcal{L}(n\omega_{r})$. Therefore, from \cite[Lemma 5.4]{GKgit}, we have $\langle (s_{rq}\cdots s_{r})(n\omega_{r}),\lambda_{rq}\rangle<0$. Thus from \cref{Special eq 5} and \cref{HM theorem}, we have that $x\notin X(w)^{ss}_{T_{J_{r}}}(\mathcal{L}(n\omega_{r}))$.

{\it{Case 2}}:  Let $1\leq j\leq r-1$. We prove that there exists a $\lambda\in Y(T_{J_{r}})$ such that $\mu^{\mathcal{L}(n\omega_{r})}(x,\lambda)<0$.\\ Let \begin{align}\label{New proof:1}
	u=\displaystyle\prod_{\substack{l=1\\l\neq j}}^{r}\prod_{i=l}^{lq}u_{\alpha_{i,l}}(a_{i,l}), \text{where $a_{i,l}\in\mathbb{C}$, for all $l\neq j$ and $i=l,\ldots,lq$}.
\end{align}
Following the idea of the proof from \cite[Lemma 5.1]{Ses2}, we find a $\lambda\in Y(T_{J_{r}})$ such that $\lim_{t \to 0} \lambda(t)x=u_{1}wP/P$, where $u_{1}$ centralizes $\lambda$.\\
Let $\lambda=-(n-1)\lambda_{jq}+n\lambda_{(j+1)q}$. 

Let $l\in\{1,2,\ldots,r\}\setminus\{j\}$ and $i\in\{l,l+1,\ldots,lq\}$. Since $l\neq j$, using the description of $\alpha_{i,l}$, we can see that if $\alpha_{jq}\leq \alpha_{i,l}$, then $\alpha_{(j+1)q}\leq \alpha_{i,l}$ and in this case $\langle \alpha_{i,l},\lambda\rangle=1$. If $\alpha_{jq}\nleq\alpha_{i,l}$, then $\langle \alpha_{i,l},\lambda\rangle=n$ or $0$ depending on whether $\alpha_{(j+1)q}\leq \alpha_{i,l}$ or $\alpha_{(j+1)q}\nleq \alpha_{i,l}$ respectively.
Thus for every $l\in\{1,2,\ldots,r\}\setminus\{j\}$ and $i\in\{l,l+1,\ldots,lq\}$, $\langle\alpha_{i,l},\lambda\rangle\in\{0,1,n\}$. Thus, by \cref{New proof:1} we have 
\begin{align*}
	\lim_{t \to 0} \lambda(t)x=\prod_{\substack{\alpha_{i,l}\in R^{+}(w^{-1})\\ \langle\alpha_{i,l},\lambda\rangle=0}}u_{\alpha_{i,l}}(a_{i,l})wP/P.
\end{align*}
Let $u_{1}=\prod_{\substack{\alpha_{i,l}\in R^{+}(w^{-1})\\ \langle\alpha_{i,l},\lambda\rangle=0}}u_{\alpha_{i,l}}(a_{i,l})$. Then $u_{1}$ centralizes $\lambda$. Therefore, using \cite[Proposition 2.1]{Ses2}, we have \begin{align*}
	\mu^{\mathcal{L}(n\omega_{r})}(x,\lambda)&=\mu^{\mathcal{L}(n\omega_{r})}(\lim_{t \to 0} \lambda(t)x,\lambda)
	=	\mu^{\mathcal{L}(n\omega_{r})}(u_{1}wP/P,\lambda).
\end{align*}
Now from \cite[Proposition 3.1]{Ses2}, we have
	\begin{align*}
		\mu^{\mathcal{L}(n\omega_{r})}(u_{1}wP/P,\lambda)
=\mu^{\mathcal{L}(n\omega_{r})}(wP/P,\lambda).
	\end{align*}
Using the similar computation as in \cite[Lemma 5.1]{Ses2}, we can see that the $\lambda(\mathbb{G}_{m})$ acts with weight $-\langle w(n\omega_{r}),\lambda\rangle$ on the fiber of the $T$-fixed point $wP/P$. The sign here is negative because we use the left action of $G$ on $G/B$. Hence from \cite[Definition 2.2]{Ses2}, we have \begin{align*}
	\mu^{\mathcal{L}(n\omega_{r})}(wP/P,\lambda)=-\langle w(n\omega_{r}),\lambda\rangle.
\end{align*}
From \cref{Lemma 4.1}, we can see $-\langle w(n\omega_{r}),\lambda\rangle=-j$. Hence, $\mu^{\mathcal{L}(n\omega_{r})}(wP/P,\lambda)=-j<0$. Now from \cref{HM theorem}, $x$ is not a semistable point.
\end{proof}
\begin{lemma}\label{semistability=stability}
\begin{enumerate}
		\item We have $X(w)^{ss}_{T_{J_{r}}}(\mathcal{L}(n\omega_{r}))=X(w)^{s}_{T_{J_{r}}}(\mathcal{L}(n\omega_{r}))$.
		\item  The quotient $\GmodX{T_{J_{r}}}{X(w)}{\mathcal{L}(n\omega_{r})}$ is smooth.
\end{enumerate}
\end{lemma}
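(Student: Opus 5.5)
For part (1) I will use the Hilbert--Mumford criterion (\cref{HM theprem for G}(2)). Let $x=uwP/P$ be semistable. By \cref{semistability criterion}, for every $1\leq j\leq r$ there is $i_{j}\in C_{j}$ with $X_{\beta_{i_{j},j}}(u)\neq 0$. Fix a nonzero $\lambda=\sum_{l=1}^{r}c_{l}\lambda_{lq}\in Y(T_{J_{r}})$; we must show $\mu^{\mathcal{L}(n\omega_{r})}(x,\lambda)>0$.

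The plan is to exhibit enough weight vectors in the support of $\hat{x}$. By \cref{remark 1.2}, $x$ has nonzero Plücker coordinate $p_{w}$, of weight $w(\omega_{r})$. The proof of \cref{Special lemma 2} and \cref{Special eq 3} gives, for each $1\leq j\leq r$, a Plücker coordinate $p_{\tau_{j}}$ with $\psi_{2}^{*}(p_{\tau_{j}})=\pm M^{\prime}_{j}$, where $M^{\prime}_{j}$ is a monomial in the nonvanishing $X_{\gamma_{i}}$. Hence $p_{\tau_{j}}(x)\neq 0$. For $j=r$ take the product for the chain starting at $i_{r}$, i.e.\ $M_{r}^{1/(n-r)}$. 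The weight of $p_{\tau_{j}}$ is $w(\omega_{r})-\mu^{\prime}_{j}$. By \cref{Lemma 4.1} and \cref{Special eq 3}, its pairing with $\lambda_{lq}$ equals $-(n-l)/n+[l\leq j]$, for $j=0,\dots,r$ (with $j=0$ giving $p_{w}$). Call these weights $\chi_{0},\dots,\chi_{r}$ and put $v_{l}:=\langle\chi_{0},\lambda_{lq}\rangle$. Then, up to the sign convention in \cref{CSS},
\[
\mu(x,\lambda)\geq -\min_{0\leq j\leq r}\langle\chi_{j},\lambda\rangle
=-\sum_{l}c_{l}v_{l}-\min_{0\leq j\leq r}\sum_{l\leq j}c_{l}.
\]

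It therefore suffices to show that for every nonzero $c\in\mathbb{Z}^{r}$, writing $S_{j}=\sum_{l\leq j}c_{l}$ with $S_{0}=0$, we have
\[
\min_{0\leq j\leq r} S_{j}<\sum_{l}c_{l}\,\frac{n-l}{n}.
\]
This is the main obstacle. The idea is that the $r+1$ characters $\chi_{j}$ restricted to $T_{J_{r}}$ form an affine simplex, and we need the trivial character to lie in its \emph{interior}. The convex combination with coefficients $t_{j}=1/n$ for $1\leq j\leq r$ and $t_{0}=1-r/n=(q-1)r/n+1/n>0$ gives $\sum_{j\geq l}t_{j}=(r-l+1)/n$. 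I will recheck the arithmetic of \cref{Lemma 4.1} to get exactly the barycentre-type identity $\sum_{j}t_{j}\chi_{j}|_{T_{J_{r}}}=0$, which may require including more copies of $p_{w}$, as is done in \cref{semistability criterion}. With all $t_{j}>0$ and the $\chi_{j}|_{T_{J_{r}}}$ affinely independent (the differences $\chi_{j}-\chi_{j-1}$ restrict to the standard basis duals), $\min_{j}\langle\chi_{j},\lambda\rangle<\sum_{j} t_{j}\langle\chi_{j},\lambda\rangle=0$ unless all $\langle\chi_{j},\lambda\rangle$ are equal. That would force $c=0$, so the inequality is strict and $\mu(x,\lambda)>0$. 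Hence every semistable point is stable.

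For part (2), stability gives finite stabilizers, and $Y_{r}$ is the geometric quotient of the smooth open set $X(w)^{s}\subseteq U_{w}wP/P\cong\mathbb{A}^{N}$. I will show the stabilizers are in fact trivial. $T_{J_{r}}$ acts on the coordinates $X_{\beta_{i,j}}$ through the characters $\beta_{i,j}|_{T_{J_{r}}}$, which by \cref{claim 1} equal $\sum_{d(i,j)\leq l\leq j}\epsilon_{l}$. A stabilizer element $t$ must satisfy $\gamma_{j}(t)=1$ for each $j$, since the relevant coordinates are nonzero. By \cref{claim 4}, $\gamma_{j}|_{T_{J_{r}}}=\epsilon_{1}+\dots+\epsilon_{j}$, and these form a $\mathbb{Z}$-basis of $X(T_{J_{r}})$, assuming the $\lambda_{lq}$ generate $T_{J_{r}}$ as a product $\mathbb{G}_{m}^{r}$. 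So $t=1$ and the action is free. By Luna's étale slice theorem, $X^{s}\to Y_{r}$ is then a principal $T_{J_{r}}$-bundle, and $Y_{r}$ is smooth.
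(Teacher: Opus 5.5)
Your approach works, but the barycentric identity at the heart of your part (1) uses the wrong coefficients. With $t_0=1-r/n$ and $t_j=1/n$ for $1\le j\le r$, you get $\sum_{j\ge l}t_j=(r-l+1)/n$. However, $\sum_j t_j\langle\chi_j,\lambda_{lq}\rangle=-(n-l)/n+\sum_{j\ge l}t_j$ vanishes only if $\sum_{j\ge l}t_j=(n-l)/n$, and the discrepancy is $r(q-1)/n\neq 0$.

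The heavy weight has to sit on $\chi_r$, not on $\chi_0=w(\omega_r)$: every coordinate of $\chi_0$ is negative and every coordinate of $\chi_r$ is positive. Take $t_0=t_1=\cdots=t_{r-1}=1/n$ and $t_r=(n-r)/n$. This is exactly the degree-$n$ invariant $p_w\cdot\prod_{j=1}^{r-1}p_{\tau_j}\cdot p_{\tau_r}^{\,n-r}$ built in the proof of \cref{semistability criterion}, where $M_r$ is the $(n-r)$-th power of the chain monomial for $i_r$. So you need extra copies of $p_{\tau_r}$, not of $p_w$. With these strictly positive coefficients and the affine independence you note, your interior-point argument is correct. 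It is also independent of the sign convention, since it gives both $\min_j\langle\chi_j,\lambda\rangle<0$ and $\max_j\langle\chi_j,\lambda\rangle>0$ for every nonzero $\lambda\in Y(T_{J_r})$.

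Once corrected, your (1) is a genuinely different route from the paper's. The paper first proves that every semistable point has trivial stabilizer. It then gets (1) structurally: all $T_{J_r}$-orbits in $X(w)^{ss}_{T_{J_r}}(\mathcal{L}(n\omega_r))$ have dimension $r$, so they are of minimal dimension and hence closed in the semistable locus. Your Hilbert--Mumford argument gives more quantitative information, namely that the trivial character lies in the interior of the weight polytope of every semistable point. The cost is that it relies on the explicit Pl\"ucker coordinates from \cref{Special lemma 1} and \cref{Special lemma 2}. Note that the freeness you prove in (2) already implies (1) via the paper's orbit-dimension argument, so the numerical computation could be skipped.

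For (2) you follow the paper: free action on a smooth open subset of the cell, geometric quotient, then Luna's slice theorem. Your freeness proof is more direct, though. Since $G$ is adjoint, $\{\lambda_i\}$ is a $\mathbb{Z}$-basis of $Y(T)$, so $T_{J_r}\cong\mathbb{G}_m^{r}$ via the $\lambda_{lq}$; your flagged assumption therefore holds. The unitriangular characters $\gamma_j|_{T_{J_r}}=\epsilon_1+\cdots+\epsilon_j$ then force $t=1$. This replaces the paper's linear-independence subclaim, the lattice-saturation lemma it quotes, and the appeal to $Z(PSL(n,\mathbb{C}))=\{1\}$.
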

\begin{proof}
	{\it Claim}: For every $x\in X(w)^{ss}_{T_{J_{r}}}(\mathcal{L}(n\omega_{r}))$, the stabilizer $(T_{J_{r}})_{x}$ of $x$ in $T_{J_{r}}$ is trivial.\\
	{\it Proof of claim}: Let $x\in X(w)^{ss}_{T_{J_{r}}}(\mathcal{L}(n\omega_{r}))$. Let $x=uwP/P$, where $u=\prod_{\substack{1\leq j\leq r\\i\in C_{j}}}u_{\beta_{i,j}}(a_{i,j})$, $a_{i,j}\in\mathbb{C}$. Then by \cref{semistability criterion}, we have for all $1\leq j\leq r$, there exists $i_{j}\in C_{j}$ such that $a_{i_{j},j}\neq 0$. Let $t\in (T_{J_{r}})_{x}$. Then we have 
	\begin{align*}
		tx=\prod_{\substack{1\leq j\leq r\\i\in C_{j}}}u_{\beta_{i,j}}(\beta_{i,j}(t)a_{i,j})wP/P
	\end{align*} 
	Since $tx=x$ and $a_{i_{j},j}\neq 0$ for all $1\leq j\leq r$, we have  \begin{align}\label{section 4:eq 1.1}
		\beta_{i_{j},j}(t)=1
	\end{align}
Le $J:=\{1,\ldots,n-1\}\setminus\{jq:1\leq j\leq r\}$. Then, we have \begin{align}\label{section 4:eq 1.2}
		T_{J_{r}}\subseteq\cap_{k\in J}ker\ \alpha_{k}
	\end{align}
	Therefore, from \cref{section 4:eq 1.1} and \cref{section 4:eq 1.2}, we have 
	\begin{align}\label{section 4:eq 1.3}
		t\in (\cap_{j=1}^{r}ker\ \beta_{i_{j},j})\cap(\cap_{k\in J}ker\ \alpha_{k})
	\end{align}
{\it Subclaim}: The set $\{\beta_{i_{j},j}:1\leq j\leq r\}\cup\{\alpha_{k}:k\in J\}$ is linearly independent.\\
{\it Proof of subclaim}: Assume that \begin{align}\label{section 4:eq 1.4}
	 \sum_{j=1}^{r}c_{j}\beta_{i_{j},j}+\sum_{k\in J}d_{k}\alpha_{k}=0
\end{align}where $c_{j},d_{k}\in\mathbb{Z}$ for all $j\in\{1,\ldots,r\}$ and $k\in J$.

We first prove that $c_{j}=0$ for all $1\leq j\leq r$. 
Assume on the contrary that $c_{j}\neq 0$ for some $j\in\{1,\ldots,r\}$. Let $j_{0}$ be the largest $j\in \{1,\ldots,r\}$ such that $c_{j_{0}}\neq 0$. From the definition of $\beta_{i_{j},j}$'s we have $\alpha_{j_{0}q}\nleq\beta_{i_{j},j}$ for all $1\leq j\leq j_{0}-1$. Further, $\alpha_{j_{0}q}\leq \beta_{i_{j_{0}},j_{0}}$. Since $\alpha_{j_{0}q}$ is a cominuscule simple root, we have $\langle\sum_{j=1}^{r}c_{j}\beta_{i_{j},j},\lambda_{j_{0}q}\rangle=c_{j_{0}}$. Since $j_{0}q\notin J$, we have $\langle \alpha_{k},\lambda_{j_{0}q}\rangle=0$ for all $k\in J$. Therefore from \cref{section 4:eq 1.4}, we have \begin{align*}
	0=\sum_{j=1}^{r}c_{j}\langle\beta_{i_{j},j},\lambda_{j_{0}q}\rangle+\sum_{k\in J}d_{k}\langle\alpha_{k},\lambda_{j_{0}q}\rangle=c_{j_{0}}.
\end{align*}
This gives a contradiction to the assumption that $c_{j_{0}}\neq 0$. Thus we have $c_{j}=0$ for all $1\leq j\leq r$.\\
 Therefore, from \cref{section 4:eq 1.4}, we have $\sum_{k\in J}d_{k}\alpha_{k}=0$. Since $\{\alpha_{k}:k\in J\}$ is linearly independent, we have $d_{k}=0$ for all $k\in J$. This proves the subclaim.

Therefore, the rank of $\sum_{j=1}^{r}\mathbb{Z}\beta_{i_{j},j}+\sum_{k\in J}\mathbb{Z}\alpha_{k}$ is $n-1$. Further, we have  $$\sum_{j=1}^{r}\mathbb{Z}\beta_{i_{j},j}+\sum_{k\in J}\mathbb{Z}\alpha_{k}\subseteq\sum_{l=1}^{n-1}\mathbb{Z}\alpha_{l}.$$
Hence, $\sum_{j=1}^{r}\mathbb{Q}\beta_{i_{j},j}+\displaystyle\sum_{k\in J}\mathbb{Q}\alpha_{k}=\sum_{l=1}^{n-1}\mathbb{Q}\alpha_{l}$. Therefore,  from \cite[lemma 3.2, page 193]{Kannan 3}, we have $\sum_{j=1}^{r}\mathbb{Z}\beta_{i_{j},j}+\displaystyle\sum_{k\in J}\mathbb{Z}\alpha_{k}=\sum_{l=1}^{n-1}\mathbb{Z}\alpha_{l}$. Thus , we have \begin{align*}
	(\cap_{j=1}^{r}ker\ \beta_{i_{j},j})\cap(\cap_{k\in J}ker\ \alpha_{k})=\cap_{l=1}^{n-1} ker\ \alpha_{l}.
\end{align*}
Hence from \cref{section 4:eq 1.3}, we have $t\in \cap_{l=1}^{n-1}ker\ \alpha_{l}$. We have $\cap_{l=1}^{n-1}ker\ \alpha_{l}=Z(PSL(n,\mathbb{C}))$. Since $Z(PSL(n,\mathbb{C}))=\{id\}$, we have $t=id$. This completes the proof of the claim.

{\it Proof of $(1)$}: From the above claim,
 for every $x\in 	X(w)^{ss}_{T_{J_{r}}}(\mathcal{L}(n\omega_{r}))$, we have that $dim(T_{J_{r}}\cdot x)=dim (T_{J_{r}})$. Thus every $T_{J_{r}}$ orbit in $X(w)^{ss}_{T_{J_{r}}}(\mathcal{L}(n\omega_{r}))$ is of minimal dimension and hence closed (see \cite[Proposition 8.3]{Hum2}). Again from claim, we have $(T_{J_{r}})_{x}$ is trivial for all  $x\in 	X(w)^{ss}_{T_{J_{r}}}(\mathcal{L}(n\omega_{r}))$. Thus we can conclude that every 
	$x\in 	X(w)^{ss}_{T_{J_{r}}}(\mathcal{L}(n\omega_{r}))$ is a stable point.\\
	{\it Proof of $(2)$}: From \cref{sction:combinatorial resul:2}, we have $X(w)$ is a minimal dimensional Schubert variety admitting semistable points for the $T_{J_{r}}$-linearized line bundle $\mathcal{L}(n\omega_{r})$. Hence from Bruhat decomposition, it follows that $X(w)^{ss}_{T_{J_{r}}}(\mathcal{L}(n\omega_{r}))\subseteq BwP/P$. Thus $X(w)^{ss}_{T_{J_{r}}}(\mathcal{L}(n\omega_{r}))$ is smooth open subset of $X(w)$. From the above claim, we have the action of $T_{J_{r}}$ on $X(w)^{ss}_{T_{J_{r}}}(\mathcal{L}(n\omega_{r}))$ is free. From part (1), we have the quotient is geometric. Then the result follws from Luna's slice theorem (see \cite[Proposition 5.7]{Luna}).
\end{proof}
\section{Description of  $\GmodX{T_{J_{r}}}{X(w_{r,n})}{\mathcal{L}(n\omega_{r})}$ for $n\equiv 1 (mod \ r)$}\label{section 5}
Assume that $r,q\in\mathbb{N}$ be such that $r\geq 2$ and $q\geq 2$. Let $n_{r}=n=rq+1$ and $n_{r-1}=(r-1)q+1$. In this section, we construct a morphism $\phi_{r}:Y_{r}\longrightarrow Y_{r-1}$ such that  $Y_{r}$ is a $\mathbb{P}^{r(q-1)}$-bundle over $Y_{r-1}$.

Let $G^{\prime}=PSL(n_{r-1},\mathbb{C})$. Let $T^{\prime}$ be a maximal torus of $G^{\prime}$ and $B^{\prime}$ be a Borel subgroup of $G^{\prime}$ containing $T^{\prime}$. The set of roots of $G^{\prime}$ with respect to $T^{\prime}$ will be denoted by $R^{\prime}$. Let $S^{\prime}:=\{\alpha_{1}^{\prime},\alpha_{2}^{\prime},\cdots,\alpha_{n_{r-1}-1}^{\prime}\}$ be the set of simple roots of $G^{\prime}$ with respect to $(B^{\prime},T^{\prime})$. Let $\{\omega^{\prime}_{1},\cdots,\omega^{\prime}_{n_{r-1}-1}\}$ denote the fundamental weights associated to $S^{\prime}$. Let $\{\lambda_{1}^{\prime},\ldots,\lambda_{n_{r-1}-1}^{\prime}\}$ denote the one parameter subgroups of $T^{\prime}$ dual to $\{\alpha_{1}^{\prime},\alpha_{2}^{\prime},\cdots,\alpha_{n_{r-1}-1}^{\prime}\}$. Let $Q$ denote the maximal parabolic subgroup of $G^{\prime}$ corresponding to simple root $\alpha_{r-1}^{\prime}$. Let $W^{\prime}=N_{G^{\prime}}(T^{\prime})/T^{\prime}$ denote the Weyl group of $G^{\prime}$ with respect to $T^{\prime}$. Let $s^{\prime}_{i}$ denote the simple reflection in $W^{\prime}$ corresponding to simple root $\alpha^{\prime}_{i}$. Let $X(w_{r-1,n_{r-1}})\subseteq G^{\prime}/Q$ be the unique minimal dimensional Schubert variety admitting semistable points for the $T^{\prime}$- linearized line bundle $\mathcal{L}(n_{r-1}\omega^{\prime}_{r-1})$. Then from \cite[Lemma 2.7]{sskannan}, a reduced expression of $w_{r-1,n_{r-1}}$ is given by
\begin{align}\label{eq 5.1.2}
	w_{r-1,n_{r-1}}=(s^{\prime}_{q}\cdots s^{\prime}_{1})(s^{\prime}_{2q}\cdots s^{\prime}_{2})\cdots (s^{\prime}_{(r-1)q}\cdots s^{\prime}_{r-1})
\end{align}
Let $T^{\prime}_{J_{r-1}}$ denote the subgroup of $T^{\prime}$ generated by $\lambda_{jq}^{\prime}(\mathbb{G}_{m})$ ($1\leq j\leq r-1$). 

For the simplicity of notation, we denote $w_{r,n_{r}}$ and $w_{r-1,n_{r-1}}$ by $w_{r}$ and $w_{r-1}$ respectively.

Define $Y_{r}:=\GmodX{T_{J_{r}}}{X(w_{r})}{\mathcal{L}(n_{r}\omega_{r})}$ and $Y_{r-1}:=\GmodX{T^{\prime}_{J_{r-1}}}{X(w_{r-1})}{\mathcal{L}(n_{r-1}\omega_{r-1}^{\prime})}$.\\
Recall that for $1\leq j\leq r$ and $i\in C_{j}$, $\beta_{i,j}:=\sum_{k=i}^{jq}\alpha_{k}$.
For every $1\leq j\leq r-1$ and  $i\in C_{j}$, let $\gamma_{i,j}:=\sum_{k=i}^{jq}\alpha^{\prime}_{k}$.
From \cite[Lemma 2.4]{sskannan}, we have \begin{align}
	R^{+}(w_{r}^{-1})&=\{\beta_{i,j}:1\leq j\leq r,i\in C_{j}\}\label{eq 5.1.3}\\
	R^{+}(w_{r-1}^{-1})&=\{\gamma_{i,j}:1\leq j\leq r-1, i\in C_{j}\}\label{eq 5.1.4}
\end{align}

For every $\beta\in R$ (respectively, $\gamma\in R^{\prime}$), let $u_{\beta}:\mathbb{C}\rightarrow U_{\beta}$ (respectively, $u^{\prime}_{\gamma}:\mathbb{C}\rightarrow U^{\prime}_{\gamma}$) be the isomorphism onto the roots subgroups such that $tu_{\beta}(a)t^{-1}=u_{\beta}(\beta(t)a)$ (respectively, $su^{\prime}_{\gamma}(a)s^{-1}=u^{\prime}_{\gamma}(\gamma(s)a)$)
 for all $t\in T$, $s\in T^{\prime}$ and $a\in\mathbb{C}$.

Let $U_{w_{r}}:=\prod_{\beta\in R^{+}(w_{r}^{-1})}U_{\beta}$ and $U_{w_{r-1}}:=\prod_{\gamma\in R^{+}(w_{r-1}^{-1})}U^{\prime}_{\gamma}$.

Let $V_{r}:=X(w_{r})^{ss}_{T_{J_{r}}}(\mathcal{L}(n_{r}\omega_{r}))$ and $V_{r-1}:=X(w_{r-1})^{ss}_{T^{\prime}_{J_{r-1}}}(\mathcal{L}(n_{r-1}\omega_{r-1}^{\prime}))$. 
Then from \cref{semistability criterion}, we have 
\begin{align}
	V_{r}&=\{uw_{r}P/P\in C(w_{r}): \text{for all $1\leq j\leq r$, $\exists i\in C_{j}$ such that} X_{\beta_{i,j}}(u)\neq 0 \}\label{eq 5.1.5}\\
	V_{r-1}=\{&u^{\prime}w_{r-1}Q/Q\in C(w_{r-1}): \text{for all $1\leq j\leq r-1$,  $\exists i\in C_{j}$ }
		\text{such that} X_{\gamma_{i,j}}(u^{\prime})\neq 0 \}\label{eq 5.1.6}
\end{align} 
In view of \cref{semistability=stability} (1), $Y_{r}$ and $Y_{r-1}$ are the geometric quotients $T_{J_{r}}\backslash V_{r}$ and $T^{\prime}_{J_{r-1}}\backslash V_{r-1}$ respectively.\\
Let $\pi_{r}:V_{r}\longrightarrow Y_{r}$ and $\pi_{r-1}:V_{r-1}\longrightarrow Y_{r-1}$ denote the GIT quotient maps.
\begin{lemma}\label{section:quotient:lemma 1}
	Let $1\leq j\leq r$. Let $i\in C_{j}$ and $(i,j)\in J_{d(i,j),j}$. \begin{enumerate}
		\item	If $1\leq j\leq r-1$. Then for every $1\leq l\leq r-1$, we have 
		\begin{align*}
			\langle \gamma_{i,j},\lambda^{\prime}_{lq}\rangle=&\begin{dcases}
				1&\text{if $d(i,j)\leq l\leq j$}\\
				0&\text{otherwise}
			\end{dcases}
		\end{align*}
		\item If $1\leq j\leq r$. Then for every $1\leq l\leq r$, we have 
		\begin{align*}
			\langle\beta_{i,j},\lambda_{lq}\rangle=&\begin{dcases}
				1&\text{ if $d(i,j)\leq l\leq j$}\\
				0&\text{ otherwise}
			\end{dcases}
		\end{align*}
	\end{enumerate}
\end{lemma}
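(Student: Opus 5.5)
Part (2) is exactly \cref{claim 1}, which has already been proved: its proof uses only the shape $\beta_{i,j}=\sum_{k=i}^{jq}\alpha_k$ and the inequality $(d(i,j)-1)q+1\leq i\leq d(i,j)q$ that follows from $(i,j)\in J_{d(i,j),j}$. So for (2) we will simply cite \cref{claim 1} with $i_j=i$. For (1) we will rerun the same argument inside $G^{\prime}=PSL(n_{r-1},\mathbb{C})$, with the simple roots $\alpha^{\prime}_k$ and the dual one-parameter subgroups $\lambda^{\prime}_{lq}$ in place of $\alpha_k$ and $\lambda_{lq}$.

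For (1), fix $1\leq j\leq r-1$ and $i\in C_j$, and write $d=d(i,j)$. First we check that $\gamma_{i,j}=\sum_{k=i}^{jq}\alpha^{\prime}_k$ makes sense, i.e.\ that $jq\leq (r-1)q=n_{r-1}-1$. Since $\langle\alpha^{\prime}_k,\lambda^{\prime}_{lq}\rangle=\delta_{k,lq}$, pairing the sum with $\lambda^{\prime}_{lq}$ gives
\begin{align*}
\langle\gamma_{i,j},\lambda^{\prime}_{lq}\rangle=\#\{k: i\leq k\leq jq,\ k=lq\},
\end{align*}
which is $1$ if $i\leq lq\leq jq$ and $0$ otherwise. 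Equivalently, $\alpha^{\prime}_{lq}$ occurs in $\gamma_{i,j}$ with coefficient $1$ (it is cominuscule) or does not occur at all.

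It remains to show that $i\leq lq\leq jq$ holds exactly when $d\leq l\leq j$. The inequality $lq\leq jq$ is the same as $l\leq j$. For the other inequality we use the bounds on $i$: from $i\leq dq$, any $l\geq d$ gives $lq\geq dq\geq i$; from $i\geq (d-1)q+1>(d-1)q$, any $l\leq d-1$ gives $lq\leq (d-1)q<i$. Hence $i\leq lq$ if and only if $l\geq d$, and we also have $d\leq j$. This proves (1).

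No step here is a real obstacle. The only point needing care is the lower bound $i\geq (d-1)q+1$, which holds in both cases of the definition of $J_{p,j}$: for $p=1$ it reads $i\geq 1$, and for $p\geq 2$ the definition gives $i\geq (p-1)q+2$, which is stronger.
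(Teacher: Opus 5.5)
Your proof is correct and follows essentially the paper's route. The paper proves (2) as \cref{claim 1} and says only that (1) is similar; it uses exactly the bounds $(d(i,j)-1)q+1\leq i\leq d(i,j)q$ to see that $\alpha_{lq}$ occurs in the root precisely when $d(i,j)\leq l\leq j$, with coefficient one. Your direct evaluation via $\langle\alpha^{\prime}_k,\lambda^{\prime}_{lq}\rangle=\delta_{k,lq}$ makes the cominuscule remark unnecessary but is the same argument.
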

\begin{proof}Proof is similar to \cref{claim 1}.
\end{proof}
\begin{lemma}\label{section:quotient:lemma 7}Let $J=(i_{1},i_{2},\cdots,i_{r})\in\Pi_{j=1}^{r}C_{j}$. 
	\begin{enumerate}
		\item 
		For $1\leq j\leq r-1$, 
		let $\delta_{J(j)}:=\sum_{k=0}^{m(i_{j},j)-1}\gamma_{i_{e^{k}(i_{j},j)},e^{k}(i_{j},j)}$. Then we have 
		\begin{align*}
			\langle\delta_{J(j)},\lambda^{\prime}_{lq}\rangle=\begin{dcases}
				1& \text{ if $1\leq l\leq j$}\\
				0& \text{ if $j+1\leq l\leq r-1$}
			\end{dcases}
		\end{align*}
		\item 	For $1\leq j\leq r$, 
		let $\gamma_{J(j)}:=\sum_{k=0}^{m(i_{j},j)-1}\beta_{i_{e^{k}(i_{j},j)},e^{k}(i_{j},j)}$. Then we have 
		\begin{align*}
			\langle\gamma_{J(j)},\lambda_{lq}\rangle=\begin{dcases}
				1& \text{ if $1\leq l\leq j$}\\
				0& \text{ if $j+1\leq l\leq r$}
			\end{dcases}
		\end{align*}
\text{$($Note that $\gamma_{J(j)}=\gamma_{j}$ as defined in \cref{eq 1.14}$)$}
	\end{enumerate}
\end{lemma}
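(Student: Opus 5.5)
Part (2) is exactly \cref{claim 4}, once we note that $\gamma_{J(j)}$ coincides with the $\gamma_{j}$ of \cref{eq 1.14}. That identity holds because the chain $e^{k}(i_{j},j)$ depends only on the chosen indices $i_{1},\ldots,i_{r}$. So the plan is to prove part (1) by repeating the argument of \cref{claim 4} in $G^{\prime}$, and then to observe that part (2) is the same argument (or simply to cite \cref{claim 4}).

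For part (1), fix $1\leq j\leq r-1$. The definitions of $C_{j}$, $J_{p,j}$, $d(i,j)$, $e^{k}$ and $m(i_{j},j)$ involve only $q$ and the indices $j\leq r-1$. Hence \cref{lemma 2.1} and \cref{e^{m}=0} apply verbatim: the sequence $e^{0}(i_{j},j)=j>e^{1}(i_{j},j)>\cdots>e^{m(i_{j},j)}(i_{j},j)=0$ is strictly decreasing. Moreover $e^{k+1}(i_{j},j)+1=d(i_{e^{k}(i_{j},j)},e^{k}(i_{j},j))$. This gives the disjoint decomposition
\[
\{1,\ldots,j\}=\bigsqcup_{k=0}^{m(i_{j},j)-1}\{l: d(i_{e^{k}(i_{j},j)},e^{k}(i_{j},j))\leq l\leq e^{k}(i_{j},j)\}.
\]
By \cref{section:quotient:lemma 1}(1), the summand $\gamma_{i_{e^{k}(i_{j},j)},e^{k}(i_{j},j)}$ pairs with $\lambda^{\prime}_{lq}$ to give $1$ exactly when $l$ lies in the $k$-th block, and $0$ otherwise.

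Two cases then finish part (1). If $j+1\leq l\leq r-1$, every $e^{k}(i_{j},j)\leq j<l$, so every summand pairs to zero and $\langle\delta_{J(j)},\lambda^{\prime}_{lq}\rangle=0$. If $1\leq l\leq j$, then $l$ lies in exactly one block, indexed by some $k_{0}$. For $k<k_{0}$ we have $l\leq e^{k_{0}}(i_{j},j)\leq e^{k+1}(i_{j},j)<d(i_{e^{k}},e^{k})$. For $k>k_{0}$ we have $e^{k}(i_{j},j)\leq e^{k_{0}+1}(i_{j},j)<l$. Hence only the $k_{0}$ term contributes, and the pairing is $1$.

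The only point needing care is well-definedness in the primed setting. We must check that each $\gamma_{i_{e^{k}},e^{k}}$ with $e^{k}\geq1$ is a genuine root in $R^{+}(w_{r-1}^{-1})$, i.e.\ that $i_{e^{k}}\in C_{e^{k}}$ with $e^{k}\leq r-1$, so that \cref{section:quotient:lemma 1}(1) applies. This holds because $e^{k}(i_{j},j)\leq j\leq r-1$ for all $k$, and $i_{e^{k}}$ is the $e^{k}$-th coordinate of $J$. Since $k$ runs only up to $m(i_{j},j)-1$, the index $e^{k}=0$ never occurs in the sum. I expect no real obstacle beyond this bookkeeping.
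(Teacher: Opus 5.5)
Your proposal is correct and follows the paper's approach. The paper's proof only says the argument is similar to \cref{claim 4}, and you carry out that same block-decomposition argument in $G^{\prime}$, using \cref{section:quotient:lemma 1}(1) in place of \cref{claim 1}, while part (2) is literally \cref{claim 4}.
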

\begin{proof} Proof is similar to \cref{claim 4}. 
\end{proof}
\begin{lemma}\label{section:quotient lemma 3}
	Let $t=\prod_{l=1}^{r}\lambda_{lq}(t_{l})$, where $t_{l}\in \mathbb{G}_{m}$ for $1\leq l\leq r$. Define $t^{\prime}:=\prod_{l=1}^{r-1}\lambda^{\prime}_{lq}(t_{l})$. Then for every $1\leq j\leq r-1$ and $i\in C_{j}$, we have $\beta_{i,j}(t)=\gamma_{i,j}(t^{\prime})$.
\end{lemma}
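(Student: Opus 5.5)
The plan is to use the fact that evaluating a character on a product of one parameter subgroups is determined by the pairings $\langle\cdot,\lambda\rangle$. For any character $\chi$ of $T$ and $t=\prod_{l=1}^{r}\lambda_{lq}(t_{l})$, we have
\begin{align*}
\chi(t)=\prod_{l=1}^{r}t_{l}^{\langle\chi,\lambda_{lq}\rangle}.
\end{align*}
Indeed, $\chi\circ\lambda_{lq}$ is the character $t_{l}\mapsto t_{l}^{\langle\chi,\lambda_{lq}\rangle}$ of $\mathbb{G}_{m}$, and $\chi$ is a homomorphism on the commutative group $T$. In the same way,
\begin{align*}
\gamma(t^{\prime})=\prod_{l=1}^{r-1}t_{l}^{\langle\gamma,\lambda^{\prime}_{lq}\rangle}
\end{align*}
for any character $\gamma$ of $T^{\prime}$. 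So it suffices to compare the exponents of $\beta_{i,j}$ and $\gamma_{i,j}$.

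Now fix $1\leq j\leq r-1$ and $i\in C_{j}$. Let $d=d(i,j)$; this depends only on the pair $(i,j)$, through the partition of \cref{remark 1}, and not on the ambient group. By \cref{section:quotient:lemma 1}(2), $\langle\beta_{i,j},\lambda_{lq}\rangle=1$ when $d\leq l\leq j$, and $0$ otherwise, for $1\leq l\leq r$. By \cref{section:quotient:lemma 1}(1), $\langle\gamma_{i,j},\lambda^{\prime}_{lq}\rangle=1$ when $d\leq l\leq j$, and $0$ otherwise, for $1\leq l\leq r-1$.

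Since $j\leq r-1$, the index $l=r$ lies outside $[d,j]$, so the $t_{r}$-exponent in $\beta_{i,j}(t)$ is zero. Hence
\begin{align*}
\beta_{i,j}(t)=\prod_{l=d}^{j}t_{l}=\gamma_{i,j}(t^{\prime}).
\end{align*}

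The only real point is to justify the formula $\chi(\prod_{l}\lambda_{l}(t_{l}))=\prod_{l}t_{l}^{\langle\chi,\lambda_{l}\rangle}$, which is simply the definition of the pairing between $X(T)$ and $Y(T)$. A second point is that \cref{section:quotient:lemma 1}(1) also holds in $G^{\prime}$: there $\gamma_{i,j}=\sum_{k=i}^{jq}\alpha^{\prime}_{k}$ contains $\alpha^{\prime}_{lq}$ with coefficient one exactly when $d\leq l\leq j$, and the $\lambda^{\prime}_{lq}$ are dual to the simple roots of $G^{\prime}$. I expect no genuine obstacle here; this is an exponent bookkeeping check.
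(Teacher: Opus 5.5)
Your proposal is correct and is essentially the paper's own argument: you expand both sides as $\prod_{l} t_{l}^{\langle\cdot,\lambda_{lq}\rangle}$ (respectively with $\lambda^{\prime}_{lq}$), and the two parts of \cref{section:quotient:lemma 1} show that each side equals $\prod_{l=d(i,j)}^{j}t_{l}$. You also state explicitly that the $t_{r}$-exponent vanishes because $j\leq r-1$, which the paper leaves implicit, but otherwise the argument is the same.
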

\begin{proof}
	Let $j\in\{1,\ldots, r-1\}$ and $i\in C_{j}$. We have $(i,j)\in J_{d(i,j),j}$. From part $(2)$ of \cref{section:quotient:lemma 1}, we have $\beta_{i,j}(t)=\prod_{ l=1}^{r}t_{l}^{\langle\beta_{i,j},\lambda_{lq}\rangle}=\prod_{l=d(i,j)}^{j}t_{l}$. From part (1) of \cref{section:quotient:lemma 1}, we have $\gamma_{i,j}(t^{\prime})=\prod_{l=1}^{r-1}t_{l}^{\langle\gamma_{i,j},\lambda^{\prime}\rangle}=\prod_{l=d(i,j)}^{j}t_{l}$.
	Therefore, we have $\beta_{i,j}(t)=\gamma_{i,j}(t^{\prime})$. 
\end{proof}
\subsection{Construction of $\phi_{r}:Y_{r}\longrightarrow Y_{r-1}$}\label{section 5.1} For $u=\displaystyle\prod_{\substack{1\leq j\leq r\\i\in C_{j}}}u_{\beta_{i,j}}(a_{i,j})\in U_{w_{r}}$, where $a_{i,j}\in\mathbb{C}$, define \begin{align}\label{eq 5.1.38}
	u[r-1]:=\prod_{\substack{1\leq j\leq r-1\\i\in C_{j}}}u^{\prime}_{\gamma_{i,j}}(a_{i,j}).
\end{align}
Note that for $u\in U_{w_{r}}$, we have $u[r-1]\in U_{w_{r-1}}$.
In view of \cref{remark 1.1}, we have a well defined morphism $f:C(w_{r})\longrightarrow C(w_{r-1})$ given by $f(uw_{r}P/P)=u[r-1]w_{r-1}Q/Q$ for all $u\in U_{w_{r}}$.\\
For any $x=uw_{r}P/P\in U_{w_{r}}w_{r}P/P$, define \begin{align}\label{eq 5.1.18}
	x[r-1]:=u[r-1]w_{r-1}Q/Q
\end{align}
  \begin{lemma}\label{eq 5.1.11} Let $u\in U_{w_{r}}$. For every $1\leq j\leq r-1$ and $i\in C_{j}$, we have
	\begin{align*}
		X_{\gamma_{i,j}}(u[r-1])=X_{\beta_{i,j}}(u).
	\end{align*}
\end{lemma}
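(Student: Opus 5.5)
This is essentially a matter of unwinding the definitions. The key point is that the coordinate functions on $U_{w_r}$ and $U_{w_{r-1}}$ are defined by reading off the parameters of the root-group factors in the fixed product decomposition.

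The plan is as follows. Write $u=\prod_{1\leq l\leq r,\,k\in C_{l}}u_{\beta_{k,l}}(a_{k,l})$ with $a_{k,l}\in\mathbb{C}$. By \cref{remark 1.1} the factors commute, and the map $\mathbb{C}^{|R^{+}(w_r^{-1})|}\to U_{w_r}$ is an isomorphism. Hence the tuple $(a_{k,l})$ is uniquely determined by $u$, and by definition $X_{\beta_{i,j}}(u)=a_{i,j}$.

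Next, by \cref{eq 5.1.38},
\[
u[r-1]=\prod_{1\leq l\leq r-1,\,k\in C_{l}}u^{\prime}_{\gamma_{k,l}}(a_{k,l}).
\]
By \cref{eq 5.1.4}, the roots $\gamma_{k,l}$ with $1\leq l\leq r-1$ and $k\in C_l$ are exactly the distinct elements of $R^{+}(w_{r-1}^{-1})$. The analogue of \cref{remark 1.1} for $G^{\prime}$ holds by the same argument: no sum of two such roots is again one of them, since each is of the form $\alpha^{\prime}_{k}+\cdots+\alpha^{\prime}_{lq}$ and the sum of two of these is never a root of this shape. So this is the standard product expression of $u[r-1]$ in $U_{w_{r-1}}$, in the order defined for $U_{w_{r-1}}$; the order is in any case irrelevant by commutativity.

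Uniqueness of the coordinates on $U_{w_{r-1}}$ then gives $X_{\gamma_{i,j}}(u[r-1])=a_{i,j}$ for $1\leq j\leq r-1$ and $i\in C_j$. Combined with the first step, $X_{\gamma_{i,j}}(u[r-1])=X_{\beta_{i,j}}(u)$.

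The only point needing care, and the main obstacle such as it is, is that the index sets $C_j$ for $j\leq r-1$ are the same for $n_r$ and $n_{r-1}$. This holds because $C_j$ depends only on $j$ and $q$. It ensures the relabeling $\beta_{i,j}\mapsto\gamma_{i,j}$ is a bijection from $\{\beta_{i,j}:j\leq r-1\}$ onto $R^{+}(w_{r-1}^{-1})$, so that $u[r-1]$ is well defined and its coordinates are exactly the $a_{i,j}$.
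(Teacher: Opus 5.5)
Your proposal is correct and follows the paper's proof: write $u=\prod u_{\beta_{i,j}}(a_{i,j})$, read off $X_{\beta_{i,j}}(u)=a_{i,j}$, and observe from the definition of $u[r-1]$ that $X_{\gamma_{i,j}}(u[r-1])=a_{i,j}$. Your remarks on commutativity, uniqueness of coordinates, and the sets $C_j$ depending only on $j$ and $q$ spell out well-definedness points that the paper takes for granted.
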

\begin{proof}
 Let $u=\prod_{\substack{1\leq j\leq r\\i\in C_{j}}}u_{\beta_{i,j}}(a_{i,j})\in U_{w_{r}}$, where $a_{i,j}\in\mathbb{C}$. Then $X_{\beta_{i,j}}(u)=a_{i,j}$. From \cref{eq 5.1.38}, we have $X_{\gamma_{i,j}}(u[r-1])=a_{i,j}$. Therefore, we have $X_{\gamma_{i,j}}(u[r-1])=X_{\beta_{i,j}}(u)$.
\end{proof}

\begin{lemma}\label{section: quotient lemma 4}
	Let $x\in V_{r}$. Then, we have $x[r-1]\in V_{r-1}$. 
\end{lemma}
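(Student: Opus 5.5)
The plan is to reduce the statement entirely to the coordinate description of the semistable loci given by \cref{semistability criterion}, in the form \cref{eq 5.1.5} and \cref{eq 5.1.6}, together with \cref{eq 5.1.11}.

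First, we place $x$ in the open cell. By \cref{sction:combinatorial resul:2} (equivalently by \cref{eq 5.1.5}), every point of $V_{r}$ lies in $U_{w_{r}}w_{r}P/P$. Hence we may write $x=uw_{r}P/P$ with $u=\prod_{1\leq j\leq r,\, i\in C_{j}}u_{\beta_{i,j}}(a_{i,j})\in U_{w_{r}}$. Then $x[r-1]=u[r-1]w_{r-1}Q/Q$ is defined as in \cref{eq 5.1.18}. Since $u[r-1]\in U_{w_{r-1}}$, the point $x[r-1]$ lies in the Schubert cell $C(w_{r-1})$.

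Next, we transfer the nonvanishing conditions. Since $x\in V_{r}$, \cref{eq 5.1.5} gives, for every $1\leq j\leq r$, an index $i_{j}\in C_{j}$ with $X_{\beta_{i_{j},j}}(u)\neq 0$. We restrict attention to $1\leq j\leq r-1$. The index sets $C_{j}$ depend only on $j$ and $q$, so they are the same sets that appear in \cref{eq 5.1.4} and \cref{eq 5.1.6} for $G^{\prime}$. By \cref{eq 5.1.11}, $X_{\gamma_{i_{j},j}}(u[r-1])=X_{\beta_{i_{j},j}}(u)\neq 0$ for each such $j$. The description \cref{eq 5.1.6}, which is \cref{semistability criterion} applied to $G^{\prime}$, $w_{r-1}$ and $T^{\prime}_{J_{r-1}}$, then yields $x[r-1]\in V_{r-1}$.

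The only point needing care is the application of \cref{semistability criterion} to the smaller group. It requires $r-1\geq 1$, which holds since $r\geq 2$, and $n_{r-1}=(r-1)q+1$, which is exactly the hypothesis of that theorem with $r$ replaced by $r-1$. Beyond this, the argument is just bookkeeping of coordinates, so I expect no genuine obstacle.
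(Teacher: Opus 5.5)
Your proposal is correct and follows essentially the same argument as the paper: write $x=uw_{r}P/P$, use \cref{eq 5.1.5} to get a nonvanishing coordinate $X_{\beta_{i,j}}(u)$ for each $j$, transfer it to $X_{\gamma_{i,j}}(u[r-1])$ via \cref{eq 5.1.11} for $1\leq j\leq r-1$, and conclude with \cref{eq 5.1.6}. Your remark that \cref{semistability criterion} applies to $G^{\prime}$ because $n_{r-1}=(r-1)q+1$ only spells out what the paper uses implicitly when it writes down \cref{eq 5.1.6}.
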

\begin{proof}
	Let $x=uw_{r}P/P\in V_{r}$. Let $u=\prod_{\substack{1\leq j\leq r\\i\in C_{j}}}u_{\beta_{i,j}}(a_{i,j})$, where $a_{i,j}\in\mathbb{C}$. From \cref{eq 5.1.5}, for every $1\leq j\leq r$, there is an integer $i\in C_{j}$ such that $a_{i,j}=X_{\beta_{i,j}}(u)\neq 0$. Therefore, using \cref{eq 5.1.11}, for every $1\leq j\leq r-1$, we have $X_{\gamma_{i,j}}(u[r-1])=a_{i,j}\neq 0$. Hence, from description of $V_{r-1}$ in \cref{eq 5.1.6}, we have $x[r-1]=u[r-1]w_{r-1}Q/Q\in V_{r-1}$.
\end{proof}
Recall $\pi_{r-1}:V_{r-1}\longrightarrow Y_{r-1}$ is the GIT quotient map.
Define a morphism $\phi:V_{r}\longrightarrow Y_{r-1}$ by  \begin{align}\label{eq 5.1.8}
	\phi(x)=\pi_{r-1}(x[r-1]),\ \text{for all $x\in V_{r}$}.
\end{align}
\begin{lemma}\label{section:quotient:lemma 2}
	The morphism $\phi$ is constant on $T_{J_{r}}$-orbits.
\end{lemma}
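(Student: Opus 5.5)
We need to show that $\phi(tx)=\phi(x)$ for all $t\in T_{J_{r}}$ and $x\in V_{r}$. The plan is to compute $(tx)[r-1]$ directly, recognize it as a translate of $x[r-1]$ by an element of $T^{\prime}_{J_{r-1}}$, and then use that $\pi_{r-1}$ is constant on $T^{\prime}_{J_{r-1}}$-orbits.

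First, write $t=\prod_{l=1}^{r}\lambda_{lq}(t_{l})$ with $t_{l}\in\mathbb{G}_{m}$, and put $t^{\prime}:=\prod_{l=1}^{r-1}\lambda^{\prime}_{lq}(t_{l})\in T^{\prime}_{J_{r-1}}$, as in \cref{section:quotient lemma 3}. Write $x=uw_{r}P/P$ with $u=\prod_{\substack{1\leq j\leq r\\ i\in C_{j}}}u_{\beta_{i,j}}(a_{i,j})$.

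Since $T$ fixes $w_{r}P/P$ and $tu_{\beta}(a)t^{-1}=u_{\beta}(\beta(t)a)$, we get
\[
tx=\Big(\prod_{\substack{1\leq j\leq r\\ i\in C_{j}}}u_{\beta_{i,j}}(\beta_{i,j}(t)a_{i,j})\Big)w_{r}P/P.
\]
By the definition \cref{eq 5.1.38}, this gives
\[
(tx)[r-1]=\Big(\prod_{\substack{1\leq j\leq r-1\\ i\in C_{j}}}u^{\prime}_{\gamma_{i,j}}(\beta_{i,j}(t)a_{i,j})\Big)w_{r-1}Q/Q.
\]
By \cref{section:quotient lemma 3}, $\beta_{i,j}(t)=\gamma_{i,j}(t^{\prime})$ for all $j\leq r-1$ and $i\in C_{j}$. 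The same conjugation identity in $G^{\prime}$, together with the fact that $T^{\prime}$ fixes $w_{r-1}Q/Q$, then shows that the right-hand side equals $t^{\prime}\cdot u[r-1]w_{r-1}Q/Q=t^{\prime}\cdot x[r-1]$. Well-definedness of the ordered products is not an issue, because the root subgroups involved commute by \cref{remark 1.1}.

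Finally, $x[r-1]\in V_{r-1}$ by \cref{section: quotient lemma 4}, and $V_{r-1}$ is $T^{\prime}_{J_{r-1}}$-stable, so $t^{\prime}\cdot x[r-1]\in V_{r-1}$ as well. Since $\pi_{r-1}$ is the quotient map for the $T^{\prime}_{J_{r-1}}$-action, it is constant on orbits, and therefore
\[
\phi(tx)=\pi_{r-1}(t^{\prime}x[r-1])=\pi_{r-1}(x[r-1])=\phi(x).
\]
The only real content is the character identity $\beta_{i,j}(t)=\gamma_{i,j}(t^{\prime})$, which is exactly \cref{section:quotient lemma 3}, so no step here is a genuine obstacle.
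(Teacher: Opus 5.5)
Your proposal is correct and follows the paper's proof essentially step for step. You compute $(tx)[r-1]$, use \cref{section:quotient lemma 3} to rewrite it as $t^{\prime}\cdot x[r-1]$, and conclude from the fact that $\pi_{r-1}$ is constant on $T^{\prime}_{J_{r-1}}$-orbits. Your extra remarks, that $T$ fixes $w_{r}P/P$ and that $V_{r-1}$ is $T^{\prime}_{J_{r-1}}$-stable, only make explicit what the paper leaves implicit.
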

\begin{proof}
	Let $t\in T_{J_{r}}$. Let  $t=\prod_{l=1}^{r}\lambda_{lq}(t_{l})$, where $t_{l}\in\mathbb{G}_{m}$ for all $1\leq l\leq r$. Define $t^{\prime}:=\prod_{l=1}^{r-1}\lambda^{\prime}_{lq}(t_{l})$.\\
	Let $x=uw_{r}P/P\in V_{r}$, where 
	$u=\prod_{\substack{1\leq j\leq r\\i\in C_{j}}}u_{\beta_{i,j}}(a_{i,j})$, $a_{i,j}\in\mathbb{C}$. Then, $$tx=\prod_{\substack{1\leq j\leq r\\i\in C_{j}}}u_{\beta_{i,j}}(\beta_{i,j}(t)a_{i,j})w_{r}P/P.$$
	Hence, $(tx)[r-1]=\prod_{\substack{1\leq j\leq r-1\\i\in C_{j}}}u^{\prime}_{\gamma_{i,j}}(\beta_{i,j}(t)a_{i,j})w_{r-1}Q/Q$.
	Therefore, from \cref{section:quotient lemma 3}, we have $(tx)[r-1]=\prod_{\substack{1\leq j\leq r-1\\i\in C_{j}}}u^{\prime}_{\gamma_{i,j}}(\gamma_{i,j}(t^{\prime})a_{i,j})w_{r-1}Q/Q$. Hence,
	\begin{align*}
		(tx)[r-1]&=t^{\prime}(\prod_{\substack{1\leq j\leq r-1\\i\in C_{j}}}u^{\prime}_{\gamma_{i,j}}(a_{i,j})w_{r-1}Q/Q)\\
		&=t^{\prime}(x[r-1]).
	\end{align*}
	Therefore, $\phi(tx)=\pi_{r-1}(t^{\prime}(x[r-1]))=\pi_{r-1}(x[r-1])=\phi(x)$.	
	\end{proof}
	Therefore, in view of \cref{section:quotient:lemma 2}, $\phi$ induces a morphism $\phi_{r}:Y_{r}\longrightarrow Y_{r-1}$ such that $\phi=\phi_{r}\circ\pi_{r}$.
	\subsection{Open covering of $Y_{r-1}$ }
	For every $J=(i_{1},i_{2},\cdots,i_{r-1})\in \Pi_{j=1}^{r-1}C_{j}$, we define
	\begin{align}\label{eq 5.1.9}
		\tilde{U}(J):=\{u^{\prime}w_{r-1}Q/Q\in V_{r-1}: X_{\gamma_{i_{j},j}}(u^{\prime})\neq 0, \text{for all } 1\leq j\leq r-1\}
	\end{align} 
	\begin{align}\label{eq 5.1.10}
		\tilde{V}(J):=\{uw_{r}P/P\in V_{r}: X_{\beta_{i_{j},j}}(u)\neq 0,\ \text{for all } 1\leq j\leq r-1\}
	\end{align}
	\begin{lemma}\label{section: quotient lemma 5}
		Let	$J=(i_{1},i_{2},\cdots,i_{r-1})\in \Pi_{j=1}^{r-1}C_{j}$. Then, we have 
		\begin{enumerate}
			\item $\tilde{U}(J)$ is stable under the action of $T^{\prime}_{J_{r-1}}$.
			\item $\tilde{V}(J)$ is stable under the action of $T_{J_{r}}$.
		\end{enumerate}
	\end{lemma}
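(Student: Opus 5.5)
The plan is to reduce both statements to how the torus acts on the root-subgroup coordinates, which is diagonal. The sets \(\tilde{U}(J)\) and \(\tilde{V}(J)\) are cut out by non-vanishing of certain coordinate functions \(X_{\gamma_{i_j,j}}\), respectively \(X_{\beta_{i_j,j}}\). Torus elements rescale these coordinates by nonzero scalars, so non-vanishing is preserved.

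\emph{Part (2).} Let \(t\in T_{J_{r}}\) and \(x=uw_{r}P/P\in\tilde{V}(J)\), with \(u=\prod_{1\leq j\leq r,\,i\in C_{j}}u_{\beta_{i,j}}(a_{i,j})\).
\begin{enumerate}
\item First, \(tx\in V_{r}\), since \(V_{r}\) is the semistable locus, which is \(T_{J_{r}}\)-stable.
\item Next, because \(t\) normalizes each \(U_{\beta}\) and fixes \(w_{r}P/P\) (as \(w_{r}^{-1}tw_{r}\in T\subseteq P\)), we get
\[
tx=\prod_{1\leq j\leq r,\,i\in C_{j}}u_{\beta_{i,j}}\bigl(\beta_{i,j}(t)a_{i,j}\bigr)w_{r}P/P .
\]
This is the same computation used in the proofs of \cref{semistability=stability} and \cref{section:quotient:lemma 2}.
\item Consequently \(X_{\beta_{i,j}}(tu t^{-1})=\beta_{i,j}(t)X_{\beta_{i,j}}(u)\). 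Since \(\beta_{i,j}(t)\in\mathbb{C}^{*}\), the condition \(X_{\beta_{i_{j},j}}(u)\neq0\) for all \(1\leq j\leq r-1\) is preserved.
\end{enumerate}
Together these give \(tx\in\tilde{V}(J)\).

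\emph{Part (1).} The argument is identical, now working in \(G^{\prime}/Q\) with \(T^{\prime}_{J_{r-1}}\), \(V_{r-1}\), and the roots \(\gamma_{i,j}\). For \(s\in T^{\prime}_{J_{r-1}}\) we have \(s\,u^{\prime}_{\gamma}(a)s^{-1}=u^{\prime}_{\gamma}(\gamma(s)a)\), and \(s\) fixes \(w_{r-1}Q/Q\). Hence \(X_{\gamma_{i,j}}\) is rescaled by the nonzero scalar \(\gamma_{i,j}(s)\), and \(V_{r-1}\) is \(T^{\prime}_{J_{r-1}}\)-stable.

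There is no real obstacle here. The only point needing care is that the coordinates \(a_{i,j}\) of \(tx\) are read off correctly. This holds because the product defining \(U_{w_r}\) is taken in a fixed order and the factors commute (\cref{remark 1.1}), so conjugating by \(t\) factor by factor gives exactly the expression in step 2, and \(\psi_{1},\psi_{2}\) identify the coordinates uniquely.
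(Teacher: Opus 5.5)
Your proposal is correct and follows the paper's argument: the paper also observes that conjugation by $t'$ (resp.\ $t$) multiplies $X_{\gamma_{i_j,j}}$ (resp.\ $X_{\beta_{i_j,j}}$) by the nonzero scalar $\gamma_{i_j,j}(t')$ (resp.\ $\beta_{i_j,j}(t)$), so the nonvanishing conditions defining $\tilde{U}(J)$ and $\tilde{V}(J)$ are preserved. Your version is a bit more careful than the paper's, since you write $tut^{-1}$, note that $t$ fixes $w_rP/P$, and check that the semistable locus itself is stable; the appeal to commutativity of the root subgroups is unnecessary, though, because conjugation by $t$ is a group homomorphism.
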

	\begin{proof}
		{\it Proof of $(1)$}: Let $x=u^{\prime}w_{r-1}Q/Q\in \tilde{U}(J)$ and $t^{\prime}\in T^{\prime}_{J_{r-1}}$. We have $t^{\prime}x=t^{\prime}u^{\prime}w_{r-1}Q/Q$, and  $X_{\gamma_{i_{j},j}}(t^{\prime}u^{\prime})=\gamma_{i_{j},j}(t^{\prime})X_{\gamma_{i_{j},j}}(u^{\prime})$. 
		Since $x\in \tilde{U}(J)$, from \cref{eq 5.1.9}, we have 
		$X_{\gamma_{i_{j},j}}(u^{\prime})\neq 0$ for all $1\leq j\leq r-1$. Therefore, $X_{\gamma_{i_{j},j}}(t^{\prime}u^{\prime})\neq 0$. Hence, $t^{\prime}x\in \tilde{U}(J)$.\\
		{\it Proof of $(2)$}: Let $x=uw_{r}P/P\in \tilde{V}(J)$ and $t\in T_{J_{r}}$. 
		Then, we have $tx=tuw_{r}P/P$ and  $X_{\beta_{i_{j},j}}(tu)=\beta_{i_{j},j}(t)X_{\beta_{i_{j},j}}(u)$. Since $x\in \tilde{V}(J)$, we have $X_{\beta_{i_{j},j}}(u)\neq 0$ for all $1\leq j\leq r-1$ (see \cref{eq 5.1.10}). Therefore, $X_{\beta_{i_{j},j}}(tu)\neq 0$. Hence, $tu\in \tilde{V}(J)$.
	\end{proof}
	For every $J\in \Pi_{j=1}^{r-1}C_{j}$, we define
	\begin{align}
		U(J):&=\pi_{r-1}(\tilde{U}(J))\label{eq 5.1.35}\\ \text{\ and\ } 	V(J):&=\pi_{r}(\tilde{V}(J))\label{eq 5.1.44}
	\end{align}	
	Since $\tilde{U}(J)$ is $T^{\prime}_{J_{r-1}}$-stable open subvariety of $X(w_{r-1})$, $U(J)$ is open in $Y_{r-1}$.
	From \cref{semistability criterion}, we have $V_{r-1}=\displaystyle\bigcup_{J\in\Pi_{j=1}^{r-1}C_{j}}\tilde{U}(J)$. Hence, $Y_{r-1}=\displaystyle\bigcup_{J\in\Pi_{j=1}^{r-1}C_{j}}U(J)$  is a covering of $Y_{r-1}$ by open subsets.
	\begin{lemma} \label{section: quotient lemma 6}Let $J\in\Pi_{j=1}^{r-1}C_{j}$. Then we have 
	\begin{enumerate}
			\item  $\pi_{r-1}^{-1}(U(J))=\tilde{U}(J)$.
			\item   $\pi_{r}^{-1}(V(J))=\tilde{V}(J)$.
	\end{enumerate}
	\end{lemma}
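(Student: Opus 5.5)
The inclusions $\tilde{U}(J)\subseteq\pi_{r-1}^{-1}(U(J))$ and $\tilde{V}(J)\subseteq\pi_{r}^{-1}(V(J))$ are immediate from the definitions \cref{eq 5.1.35} and \cref{eq 5.1.44}. So the content of the lemma is the reverse inclusions, and for these I will use that $Y_{r-1}$ and $Y_{r}$ are geometric quotients. By \cref{semistability=stability}(1), $X(w_{r-1})^{ss}_{T^{\prime}_{J_{r-1}}}(\mathcal{L}(n_{r-1}\omega^{\prime}_{r-1}))=X(w_{r-1})^{s}_{T^{\prime}_{J_{r-1}}}(\mathcal{L}(n_{r-1}\omega^{\prime}_{r-1}))$, and similarly for $w_{r}$. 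Hence the fibres of $\pi_{r-1}:V_{r-1}\to Y_{r-1}$ and of $\pi_{r}:V_{r}\to Y_{r}$ are exactly the orbits of $T^{\prime}_{J_{r-1}}$ and $T_{J_{r}}$ respectively. This is the standard property of geometric quotients: points of the semistable locus have the same image if and only if their orbit closures in the semistable locus meet, and here all orbits are closed.

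For (1), let $y\in\pi_{r-1}^{-1}(U(J))$. Then $\pi_{r-1}(y)=\pi_{r-1}(x)$ for some $x\in\tilde{U}(J)$. By the fibre description, $y=t^{\prime}x$ for some $t^{\prime}\in T^{\prime}_{J_{r-1}}$. By \cref{section: quotient lemma 5}(1), $\tilde{U}(J)$ is $T^{\prime}_{J_{r-1}}$-stable, so $y\in\tilde{U}(J)$. Concretely, $X_{\gamma_{i_{j},j}}(t^{\prime}u^{\prime})=\gamma_{i_{j},j}(t^{\prime})X_{\gamma_{i_{j},j}}(u^{\prime})\neq 0$.

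For (2), the argument is identical. Take $y\in\pi_{r}^{-1}(V(J))$, write $y=tx$ with $x\in\tilde{V}(J)$ and $t\in T_{J_{r}}$, and invoke \cref{section: quotient lemma 5}(2).

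The only point needing care is justifying that the fibres of the GIT quotient map are single orbits. I expect this to be the main step, but it is standard: for a good quotient, two points of $V_{r}$ have the same image exactly when their orbit closures in $V_{r}$ intersect. Since stability gives closed orbits in $V_{r}$, the fibres are orbits, and the same holds for $V_{r-1}$.
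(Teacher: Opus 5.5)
Your proposal is correct and follows essentially the same route as the paper: the paper also takes the trivial inclusion from the definition, uses semistability $=$ stability to write $y=t^{\prime}y^{\prime}$ with $y^{\prime}\in\tilde{U}(J)$, and concludes by the torus-stability in \cref{section: quotient lemma 5}, with (2) handled identically. Your added justification that fibres of a good quotient are single orbits when all orbits in the semistable locus are closed is exactly the step the paper uses implicitly.
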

	\begin{proof}{\it Proof of (1)}:
		Let $y\in V_{r-1}$ be such that $\pi_{r-1}(y)\in U(J)$. Therefore, $\pi_{r-1}(y)=\pi_{r-1}(y^{\prime})$ for some $y^{\prime}\in \tilde{U}(J)$. Since $X(w_{r-1})^{ss}_{T^{\prime}_{J_{r-1}}}(\mathcal{L}(n_{r-1}\omega^{\prime}_{r-1}))=X(w_{r-1})^{s}_{T^{\prime}_{J_{r-1}}}(\mathcal{L}(n_{r-1}\omega^{\prime}_{r-1}))$, we have $y=t^{\prime}y^{\prime}$ for some $t^{\prime}\in T^{\prime}_{J_{r-1}}$. Hence from \cref{section: quotient lemma 5} $(1)$, we have $y\in \tilde{U}(J)$. Therefore, $\pi_{r-1}^{-1}(U(J))\subseteq\tilde{U}(J)$. Other inclusion follows from \cref{eq 5.1.35}. Thus we have $\pi_{r-1}^{-1}(U(J))=\tilde{U}(J)$.\\
	Proof of $(2)$ is similar to the proof of $(1)$.
	\end{proof}
	\begin{lemma}\label{section:quotient:lemma 10}
		\begin{enumerate}
			\item For any $J_{1},J_{2}\in\Pi_{j=1}^{r-1}C_{j}$, we have $\pi_{r-1}(\tilde{U}(J_{1})\cap \tilde{U}(J_{2}))=U(J_{1})\cap U(J_{2})$.
			\item  For any three $J_{1},J_{2},J_{3}\in\Pi_{j=1}^{r-1}C_{j}$, we have $\pi_{r-1}(\tilde{U}(J_{1})\cap \tilde{U}(J_{2})\cap \tilde{U}(J_{3}))=U(J_{1})\cap U(J_{2})\cap U(J_{3}).$
		\end{enumerate}
	\end{lemma}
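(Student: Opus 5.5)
Both parts are set-theoretic statements about images under the quotient map $\pi_{r-1}$. The key input is that the sets $\tilde{U}(J)$ are saturated: by \cref{section: quotient lemma 6}(1) we have $\pi_{r-1}^{-1}(U(J))=\tilde{U}(J)$ for every $J\in\Pi_{j=1}^{r-1}C_{j}$. We will then use the elementary fact that for a surjective map $\pi$ and saturated subsets $A_{k}=\pi^{-1}(\pi(A_{k}))$, the image of an intersection equals the intersection of the images. Surjectivity of $\pi_{r-1}:V_{r-1}\to Y_{r-1}$ holds because it is a GIT quotient map, which is in fact a geometric quotient by \cref{semistability=stability}.

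For part (1), the inclusion $\pi_{r-1}(\tilde{U}(J_{1})\cap\tilde{U}(J_{2}))\subseteq U(J_{1})\cap U(J_{2})$ is immediate from the definition \cref{eq 5.1.35}. For the reverse inclusion, take $y\in U(J_{1})\cap U(J_{2})$. By surjectivity, choose $x\in V_{r-1}$ with $\pi_{r-1}(x)=y$. Then $x\in\pi_{r-1}^{-1}(U(J_{1}))=\tilde{U}(J_{1})$ and $x\in\pi_{r-1}^{-1}(U(J_{2}))=\tilde{U}(J_{2})$ by \cref{section: quotient lemma 6}(1). Hence $y\in\pi_{r-1}(\tilde{U}(J_{1})\cap\tilde{U}(J_{2}))$.

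Part (2) is proved the same way with three indices. Given $y\in U(J_{1})\cap U(J_{2})\cap U(J_{3})$, any preimage $x$ lies in each $\tilde{U}(J_{k})$, so $y$ lies in the image of the triple intersection. The other inclusion is again trivial.

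There is no real obstacle, since the key step, saturation, is already provided by \cref{section: quotient lemma 6}. That lemma itself rested on semistability equalling stability, so that fibres of $\pi_{r-1}$ are single $T^{\prime}_{J_{r-1}}$-orbits, together with the $T^{\prime}_{J_{r-1}}$-stability of $\tilde{U}(J)$ from \cref{section: quotient lemma 5}. If one prefers not to quote surjectivity separately, note that $y\in U(J_{1})$ already provides a preimage $x\in\tilde{U}(J_{1})\subseteq V_{r-1}$. That suffices, since the remaining memberships then follow from saturation.
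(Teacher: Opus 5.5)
Your proof is correct and is essentially the paper's argument. The paper picks preimages $y_{1}\in\tilde{U}(J_{1})$ and $y_{2}\in\tilde{U}(J_{2})$. It uses that fibres of $\pi_{r-1}$ are single $T^{\prime}_{J_{r-1}}$-orbits to write $y_{2}=t^{\prime}y_{1}$, and then uses $T^{\prime}_{J_{r-1}}$-stability of $\tilde{U}(J_{1})$. You instead package these same two ingredients into the already-proved saturation identity $\pi_{r-1}^{-1}(U(J))=\tilde{U}(J)$ and apply it to a single preimage, with the trivial inclusion and part (2) handled the same way.
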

	\begin{proof}{\it{Proof of $(1)$}}:
		Let $x\in U(J_{1})\cap U(J_{2})$. Therefore, $x=\pi_{r-1}(y_{1})$ and $x=\pi_{r-1}(y_{2})$ for some $y_{1}\in \tilde{U}(J_{1})$ and $y_{2}\in \tilde{U}(J_{2})$. Hence, $\pi_{r-1}(y_{1})=\pi_{r-1}(y_{2})$. This implies, $y_{2}=t^{\prime}y_{1}$ for some $t^{\prime}\in T^{\prime}_{J_{r-1}}$. Since $y_{1}\in \tilde{U}(J_{1})$, from \cref{section: quotient lemma 5}(1), we have  $y_{2}\in \tilde{U}(J_{1})$. Therefore, $x\in \pi_{r-1}(\tilde{U}(J_{1})\cap \tilde{U}(J_{2}))$. Hence, $U(J_{1})\cap U(J_{2})\subseteq \pi_{r-1}(\tilde{U}(J_{1})\cap \tilde{U}(J_{2}))$. Other inclusion follows from \cref{eq 5.1.35}.\\
		Proof of $(2)$ is similar to the proof of $(1)$.
	\end{proof}
\begin{lemma}\label{section:quotient:lemma 14}
	For any $J_{1},J_{2}\in\Pi_{j=1}^{r-1}C_{j}$, we have $\pi_{r}(\tilde{V}(J_{1})\cap\tilde{V}(J_{2}))=V(J_{1})\cap V(J_{2})$.
\end{lemma}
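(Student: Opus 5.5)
The argument copies the proof of \cref{section:quotient:lemma 10}(1), with $\pi_{r-1}$, $\tilde{U}(J)$ and $T^{\prime}_{J_{r-1}}$ replaced by $\pi_{r}$, $\tilde{V}(J)$ and $T_{J_{r}}$. It uses two facts already established for $V_{r}$. First, by \cref{semistability=stability}(1), $V_{r}=X(w_{r})^{ss}_{T_{J_{r}}}(\mathcal{L}(n_{r}\omega_{r}))$ consists of stable points. Hence $\pi_{r}:V_{r}\longrightarrow Y_{r}$ is a geometric quotient, and its fibres are exactly the $T_{J_{r}}$-orbits. Second, by \cref{section: quotient lemma 5}(2), each $\tilde{V}(J)$ is $T_{J_{r}}$-stable.

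The inclusion $\pi_{r}(\tilde{V}(J_{1})\cap\tilde{V}(J_{2}))\subseteq V(J_{1})\cap V(J_{2})$ follows at once from the definition of $V(J)$ in \cref{eq 5.1.44}. An element of $\tilde{V}(J_{1})\cap\tilde{V}(J_{2})$ lies in both $\tilde{V}(J_{1})$ and $\tilde{V}(J_{2})$, so its image lies in $\pi_{r}(\tilde{V}(J_{1}))\cap\pi_{r}(\tilde{V}(J_{2}))$.

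For the reverse inclusion, let $x\in V(J_{1})\cap V(J_{2})$. Choose $y_{1}\in\tilde{V}(J_{1})$ and $y_{2}\in\tilde{V}(J_{2})$ with $\pi_{r}(y_{1})=x=\pi_{r}(y_{2})$. Since the quotient is geometric, $y_{1}$ and $y_{2}$ lie in the same $T_{J_{r}}$-orbit, so $y_{2}=ty_{1}$ for some $t\in T_{J_{r}}$. By \cref{section: quotient lemma 5}(2), $\tilde{V}(J_{1})$ is $T_{J_{r}}$-stable, so $y_{2}\in\tilde{V}(J_{1})$. Therefore $y_{2}\in\tilde{V}(J_{1})\cap\tilde{V}(J_{2})$, and $x=\pi_{r}(y_{2})$ lies in $\pi_{r}(\tilde{V}(J_{1})\cap\tilde{V}(J_{2}))$.

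Alternatively, the statement follows from \cref{section: quotient lemma 6}(2), which gives $\pi_{r}^{-1}(V(J))=\tilde{V}(J)$. Since $\pi_{r}$ is surjective,
\begin{align*}
\pi_{r}(\tilde{V}(J_{1})\cap\tilde{V}(J_{2}))=\pi_{r}\bigl(\pi_{r}^{-1}(V(J_{1})\cap V(J_{2}))\bigr)=V(J_{1})\cap V(J_{2}).
\end{align*}

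The only step with real content is that points in the same fibre of $\pi_{r}$ are $T_{J_{r}}$-translates of each other. This holds because every point of $V_{r}$ is stable, so the good quotient $\pi_{r}$ is geometric.
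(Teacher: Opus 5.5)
Your proof is correct and takes the same route as the paper. The paper simply says the proof is similar to that of \cref{section:quotient:lemma 10}(1), which is exactly your argument: fibres of the geometric quotient $\pi_{r}$ are $T_{J_{r}}$-orbits, and $\tilde{V}(J_{1})$ is $T_{J_{r}}$-stable; your alternative via $\pi_{r}^{-1}(V(J))=\tilde{V}(J)$ (\cref{section: quotient lemma 6}(2)) together with surjectivity of $\pi_{r}$ is also valid.
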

\begin{proof}
	Proof is similar to the proof of \cref{section:quotient:lemma 10}(1).
\end{proof}
	\begin{lemma}\label{section:quotient:lemma 13}
		For every $J\in\Pi_{j=1}^{r-1}C_{j}$, we have $\phi_{r}^{-1}(U(J))=V(J)$.
	\end{lemma}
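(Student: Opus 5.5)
The plan is to reduce the statement to the level of the semistable loci using the relation $\phi=\phi_{r}\circ\pi_{r}$, together with \cref{section: quotient lemma 6}.

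First observe that
\begin{align*}
\pi_{r}^{-1}(\phi_{r}^{-1}(U(J)))=\phi^{-1}(U(J)).
\end{align*}
By \cref{section: quotient lemma 6}(2), we also have $\pi_{r}^{-1}(V(J))=\tilde{V}(J)$. Since $\pi_{r}$ is surjective, any subset $A\subseteq Y_{r}$ satisfies $A=\pi_{r}(\pi_{r}^{-1}(A))$. Hence it suffices to prove the equality of subsets of $V_{r}$
\begin{align*}
\phi^{-1}(U(J))=\tilde{V}(J).
\end{align*}
Applying $\pi_{r}$ to both sides then gives $\phi_{r}^{-1}(U(J))=\pi_{r}(\tilde{V}(J))=V(J)$, using the definition of $V(J)$ in \cref{eq 5.1.44}.

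To prove the equality on $V_{r}$, let $x=uw_{r}P/P\in V_{r}$. By the definition \cref{eq 5.1.8}, $\phi(x)\in U(J)$ means $\pi_{r-1}(x[r-1])\in U(J)$. By \cref{section: quotient lemma 4} we know $x[r-1]\in V_{r-1}$, so this condition is equivalent to
\begin{align*}
x[r-1]\in\pi_{r-1}^{-1}(U(J))=\tilde{U}(J),
\end{align*}
where the equality is \cref{section: quotient lemma 6}(1). By the definition \cref{eq 5.1.9}, this holds if and only if $X_{\gamma_{i_{j},j}}(u[r-1])\neq 0$ for all $1\leq j\leq r-1$. By \cref{eq 5.1.11}, $X_{\gamma_{i_{j},j}}(u[r-1])=X_{\beta_{i_{j},j}}(u)$, so the condition is equivalent to $X_{\beta_{i_{j},j}}(u)\neq 0$ for all $1\leq j\leq r-1$. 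By \cref{eq 5.1.10}, this is exactly the statement $x\in\tilde{V}(J)$.

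The only point needing care is the bookkeeping of preimages under the quotient maps: we must check that $\pi_{r}(\pi_{r}^{-1}(A))=A$ and that $\pi_{r}^{-1}$ of the open set $V(J)$ is exactly $\tilde{V}(J)$. Both are supplied by the surjectivity of $\pi_{r}$ and by \cref{section: quotient lemma 6}; the latter rests on the geometric-quotient property from \cref{semistability=stability}. Once these are in place, everything else is a direct unwinding of the definitions, and I do not expect any real obstacle.
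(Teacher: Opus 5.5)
Your proof is correct and follows essentially the same route as the paper: both unwind $\phi=\phi_{r}\circ\pi_{r}$, use \cref{section: quotient lemma 6}(1) to get $\pi_{r-1}^{-1}(U(J))=\tilde{U}(J)$, and then use \cref{eq 5.1.11} to match $X_{\gamma_{i_{j},j}}(u[r-1])$ with $X_{\beta_{i_{j},j}}(u)$. The only difference is packaging. You prove $\phi^{-1}(U(J))=\tilde{V}(J)$ on $V_{r}$ as a single chain of equivalences and descend via surjectivity of $\pi_{r}$, whereas the paper checks the two inclusions elementwise; also, your appeal to \cref{section: quotient lemma 6}(2) is never used, since only the definition $V(J)=\pi_{r}(\tilde{V}(J))$ enters.
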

	\begin{proof}Let $J=(i_{1},i_{2},\cdots,i_{r-1})$.
		Let $z\in Y_{r}$ be such that $\phi_{r}(z)\in U(J)$. Let $z=\pi_{r}(x)$, where $x=uw_{r}P/P\in V_{r}$. From \cref{eq 5.1.8}, we have $\phi_{r}(z)=\pi_{r-1}(x[r-1])$. Hence, $\pi_{r-1}(x[r-1])\in U(J)$. Now from \cref{section: quotient lemma 6}(1), we have $x[r-1]\in\tilde{U}(J)$. Therefore, $X_{\gamma_{i_{j},j}}(u[r-1])\neq 0$, for all $1\leq j\leq r-1$. From \cref{eq 5.1.11}, we have $X_{\beta_{i_{j},j}}(u)=X_{\gamma_{i_{j},j}}(u[r-1])\neq 0$, for all $1\leq j\leq r-1$. Hence, $x\in \tilde{V}(J)$. Thus, $z\in V(J)$. Therefore, \begin{align}\label{eq 5.1.36}
			\phi_{r}^{-1}(U(J))\subseteq V(J).
		\end{align}
		Let $z\in V(J)$. Let $z=\pi_{r}(x)$, for some $ x=uw_{r}P/P\in \tilde{V}(J)$. Then, we have $X_{\beta_{i_{j},j}}(u)\neq 0$ for all $1\leq j\leq r-1$. We have $\phi_{r}(z)=\pi_{r-1}(x[r-1])$. From \cref{eq 5.1.11}, we have $X_{\gamma_{i_{j},j}}(u[r-1])=X_{\beta_{i_{j},j}}(u)\neq 0$.  Therefore, $u[r-1]w_{r-1}Q/Q\in\tilde{U}(J)$ (see \cref{eq 5.1.9}). Hence,  $\phi_{r}(z)=\pi_{r-1}(x[r-1])=\pi_{r-1}(u[r-1]w_{r-1}Q/Q)\in U(J)$. Thus, \begin{align}\label{eq 5.1.37}
			V(J)\subseteq \phi_{r}^{-1}(U(J)).
		\end{align}
		Now the proof follows from \cref{eq 5.1.36} and \cref{eq 5.1.37}.
	\end{proof}
	\subsection{Construction of function $b(J(j)):\tilde{U}(J)\longrightarrow\mathbb{C^{\times}}$ for $0\leq j\leq r-1$}\ 
	\\
	Let $J=(i_{1},i_{2},\cdots,i_{r-1})\in\Pi_{j=1}^{r-1}C_{j}$. For every $1\leq j\leq r-1$, define $J(j):=(i_{1},\cdots, i_{j})$ and $J(0):=\phi.$\\ Recall the definition of $e^{k}(i_{j},j)$ and $m(i_{j},j)$ from \cref{construction of e^{k}} (see \cref{Special eq 6} and \cref{e^{m}=0}).
	Let $y=\tilde{u}w_{r-1}Q/Q\in\tilde{U}(J)$, where $\tilde{u}=\prod_{\substack{1\leq j\leq r-1\\i\in C_{j}}}u^{\prime}_{\gamma_{i,j}}(a_{i,j}), a_{i,j}\in\mathbb{C}$.
	
	\begin{align}
		\text{For $1\leq j\leq r-1$, define\ }&
		b(J(j))(y):=\prod_{k=0}^{m(i_{j},j)-1}X_{\gamma_{i_{e^{k}(i_{j},j)},e^{k}(i_{j},j)}}(\tilde{u})\label{eq 5.1.12}\\
		&b(J(0))(y):=1\label{eq 5.1.14}
	\end{align}

	Let $J_{1},J_{2}\in\Pi_{j=1}^{r-1}C_{j}$. For $0\leq j\leq r-1$, define the function
	$$\tilde{b}_{J_{1},J_{2}}(j):\tilde{U}(J_{1})\cap \tilde{U}(J_{2})\longrightarrow\mathbb{C^{\times}}$$ by \begin{align}\label{eq 5.1.15}
		\tilde{b}_{J_{1},J_{2}}(j)(y):=\frac{b(J_{1}(j))(y)}{b(J_{2}(j))(y)},\text{\ for all
			$y\in \tilde{U}(J_{1})\cap \tilde{U}(J_{2})$}
	\end{align}
	\begin{lemma}\label{section:quotient:lemma 8}
		Let $J\in\Pi_{j=1}^{r-1}C_{j}$. Then for every $t^{\prime}\in T^{\prime}_{J_{r-1}}$ and $1\leq j\leq r-1$,  we have $$b(J(j))(t^{\prime}y)=\delta_{J(j)}(t^{\prime})b(J(j))(y),\text{\ for all $y\in \tilde{U}(J)$}.$$
	\end{lemma}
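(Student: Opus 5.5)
The plan is to write $b(J(j))$ as a product of coordinate functions and track how each factor scales under $T^{\prime}_{J_{r-1}}$. Fix $J=(i_{1},\ldots,i_{r-1})$, $1\leq j\leq r-1$, and $y=\tilde{u}w_{r-1}Q/Q\in\tilde{U}(J)$ with $\tilde{u}=\prod_{1\leq l\leq r-1,\, i\in C_{l}}u^{\prime}_{\gamma_{i,l}}(a_{i,l})$. For $t^{\prime}\in T^{\prime}_{J_{r-1}}$ we first rewrite $t^{\prime}y$ in the same normal form. Since $t^{\prime}$ normalizes each root subgroup, we have $t^{\prime}u^{\prime}_{\gamma}(a)t^{\prime-1}=u^{\prime}_{\gamma}(\gamma(t^{\prime})a)$. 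Also $t^{\prime}$ fixes the coset $w_{r-1}Q/Q$, because $w_{r-1}^{-1}t^{\prime}w_{r-1}\in T^{\prime}\subseteq Q$. Hence
\[
t^{\prime}y=\Bigl(\prod_{\substack{1\leq l\leq r-1\\ i\in C_{l}}}u^{\prime}_{\gamma_{i,l}}(\gamma_{i,l}(t^{\prime})a_{i,l})\Bigr)w_{r-1}Q/Q .
\]
The factors commute by the analogue of \cref{remark 1.1} for $w_{r-1}$, so the ordering is irrelevant. This gives $X_{\gamma_{i,l}}(t^{\prime}\tilde{u})=\gamma_{i,l}(t^{\prime})X_{\gamma_{i,l}}(\tilde{u})$ for every $\gamma_{i,l}\in R^{+}(w_{r-1}^{-1})$. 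By \cref{section: quotient lemma 5}(1), $t^{\prime}y$ again lies in $\tilde{U}(J)$, so $b(J(j))(t^{\prime}y)$ is defined.

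Next we apply this factor by factor in the definition \cref{eq 5.1.12}:
\[
b(J(j))(t^{\prime}y)=\prod_{k=0}^{m(i_{j},j)-1}\gamma_{i_{e^{k}(i_{j},j)},e^{k}(i_{j},j)}(t^{\prime})\,X_{\gamma_{i_{e^{k}(i_{j},j)},e^{k}(i_{j},j)}}(\tilde{u}).
\]
The characters multiply additively in $X(T^{\prime})$, so the product of the scalars is the character $\sum_{k=0}^{m(i_{j},j)-1}\gamma_{i_{e^{k}(i_{j},j)},e^{k}(i_{j},j)}$ evaluated at $t^{\prime}$. This sum is exactly $\delta_{J(j)}$ from \cref{section:quotient:lemma 7}(1). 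Therefore $b(J(j))(t^{\prime}y)=\delta_{J(j)}(t^{\prime})\,b(J(j))(y)$.

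The only point needing care is that each index $e^{k}(i_{j},j)$ for $0\leq k\leq m(i_{j},j)-1$ lies in $\{1,\ldots,j\}$. This guarantees that $i_{e^{k}(i_{j},j)}$ is one of the entries of $J(j)$ and that $\gamma_{i_{e^{k}},e^{k}}$ is a genuine root in $R^{+}(w_{r-1}^{-1})$. It follows from \cref{lemma 2.1}, \cref{e^{m}=0}, and the fact that $e^{0}(i_{j},j)=j$. Beyond this index bookkeeping the argument is a routine weight computation, and I expect no real obstacle.
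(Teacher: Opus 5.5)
Your proposal is correct and follows essentially the same route as the paper. Both use the $T^{\prime}_{J_{r-1}}$-stability of $\tilde{U}(J)$ and the scaling $X_{\gamma}(t^{\prime}\tilde{u})=\gamma(t^{\prime})X_{\gamma}(\tilde{u})$ factor by factor in the definition of $b(J(j))$, and then identify the product of characters with $\delta_{J(j)}(t^{\prime})$. Your extra remarks, that $t^{\prime}$ fixes $w_{r-1}Q/Q$ and that $1\leq e^{k}(i_{j},j)\leq j$ for $k<m(i_{j},j)$, make explicit details the paper leaves implicit. Commutativity of the root subgroups is not actually needed, since conjugation by $t^{\prime}$ preserves the order of the factors.
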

	\begin{proof}
		Let	$y=\tilde{u}w_{r-1}Q/Q\in\tilde{U}(J)$. Then from \cref{section: quotient lemma 5}(1), we have, $t^{\prime}y=t^{\prime}\tilde{u}w_{r-1}Q/Q\in \tilde{U}(J)$. Therefore, from \cref{eq 5.1.12}, we have 
		\begin{align*}
			b(J(j))(t^{\prime}y)&=\prod_{k=0}^{m(i_{j},j)-1}X_{\gamma_{i_{e^{k}(i_{j},j)},e^{k}(i_{j},j)}}(t^{\prime}\tilde{u}).
		\end{align*}
		Since $X_{\gamma_{i_{e^{k}(i_{j},j)},e^{k}(i_{j},j)}}(t^{\prime}\tilde{u})=\gamma_{i_{e^{k}(i_{j},j)},e^{k}(i_{j},j)}(t^{\prime})X_{\gamma_{i_{e^{k}(i_{j},j)},e^{k}(i_{j},j)}}(\tilde{u})$, we have 
		\begin{align}\label{eq 5.1.13}
			b(J(j))(t^{\prime}y)&=(\prod_{k=0}^{m(i_{j},j)-1}\gamma_{i_{e^{k}(i_{j},j)},e^{k}(i_{j},j)}(t^{\prime}))(\prod_{k=0}^{m(i_{j},j)-1}X_{\gamma_{i_{e^{k}(i_{j},j)},e^{k}(i_{j},j)}}(\tilde{u}))
		\end{align}
		Now from \cref{section:quotient:lemma 7}$(1)$, we have $\prod_{k=0}^{m(i_{j},j)-1}\gamma_{i_{e^{k}(i_{j},j)},e^{k}(i_{j},j)}(t^{\prime})=\delta_{J(j)}(t^{\prime})$. Thus, from \cref{eq 5.1.13} and \cref{eq 5.1.12}, we have 	$b(J(j))(t^{\prime}y)=\delta_{J(j)}(t^{\prime})b(J(j))(y)$.
	\end{proof}
	\begin{lemma}\label{section:quotient:lemma 9}
		Let $J_{1},J_{2}\in\Pi_{j=1}^{r-1}C_{j}$. Then for all $0\leq j\leq r-1$, $\tilde{b}_{J_{1},J_{2}}(j)$ is constant on $T^{\prime}_{J_{r-1}}$-orbits.
	\end{lemma}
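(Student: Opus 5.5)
The plan is to use \cref{section:quotient:lemma 8} directly and show that the two characters which appear cancel. Fix $J_{1},J_{2}\in\Pi_{j=1}^{r-1}C_{j}$, $0\leq j\leq r-1$, $y\in\tilde{U}(J_{1})\cap\tilde{U}(J_{2})$ and $t^{\prime}\in T^{\prime}_{J_{r-1}}$.

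By \cref{section: quotient lemma 5}(1), $t^{\prime}y$ again lies in $\tilde{U}(J_{1})\cap\tilde{U}(J_{2})$. Hence $\tilde{b}_{J_{1},J_{2}}(j)(t^{\prime}y)$ is defined, and both numerator and denominator are nonzero by \cref{eq 5.1.12}.

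The case $j=0$ is immediate: by \cref{eq 5.1.14} both $b(J_{1}(0))$ and $b(J_{2}(0))$ equal $1$, so $\tilde{b}_{J_{1},J_{2}}(0)\equiv 1$.

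For $1\leq j\leq r-1$, I apply \cref{section:quotient:lemma 8} to $J_{1}$ and to $J_{2}$. This gives
\[
\tilde{b}_{J_{1},J_{2}}(j)(t^{\prime}y)=\frac{\delta_{J_{1}(j)}(t^{\prime})}{\delta_{J_{2}(j)}(t^{\prime})}\,\tilde{b}_{J_{1},J_{2}}(j)(y).
\]
It then suffices to show that the characters $\delta_{J_{1}(j)}$ and $\delta_{J_{2}(j)}$ agree on $T^{\prime}_{J_{r-1}}$.

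Write $t^{\prime}=\prod_{l=1}^{r-1}\lambda^{\prime}_{lq}(t_{l})$. Then for any character $\chi$ we have $\chi(t^{\prime})=\prod_{l}t_{l}^{\langle\chi,\lambda^{\prime}_{lq}\rangle}$. By \cref{section:quotient:lemma 7}(1), the pairings $\langle\delta_{J(j)},\lambda^{\prime}_{lq}\rangle$ equal $1$ for $l\leq j$ and $0$ for $l>j$, independently of $J$. Hence $\delta_{J_{1}(j)}(t^{\prime})=\prod_{l=1}^{j}t_{l}=\delta_{J_{2}(j)}(t^{\prime})$, the ratio is $1$, and the function is invariant.

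The one point needing care is that \cref{section:quotient:lemma 7}(1) is stated for the full $J$, whereas $b(J(j))$ only involves the entries $i_{1},\ldots,i_{j}$. I will check that the indices $e^{k}(i_{j},j)$ used in $\delta_{J(j)}$ are all at most $j$, which follows from \cref{lemma 2.1}, so the same formula applies to $J(j)$. Beyond this there is no real obstacle.
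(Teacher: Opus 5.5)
Your proposal is correct and follows the paper's proof essentially step for step. Like the paper, you use the $T^{\prime}_{J_{r-1}}$-stability of $\tilde{U}(J_{1})\cap\tilde{U}(J_{2})$, settle $j=0$ trivially, apply the equivariance $b(J(j))(t^{\prime}y)=\delta_{J(j)}(t^{\prime})\,b(J(j))(y)$ to both $J_{1}$ and $J_{2}$, and then read off $\delta_{J_{1}(j)}(t^{\prime})=\prod_{l=1}^{j}t_{l}=\delta_{J_{2}(j)}(t^{\prime})$ from the pairing computation. Your remark that $\delta_{J(j)}$ only involves $i_{1},\ldots,i_{j}$, because the $e^{k}(i_{j},j)$ decrease from $j$, is a harmless clarification the paper leaves implicit. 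One small correction: the nonvanishing of $b(J(j))(y)$ comes from the definition of $\tilde{U}(J)$ combined with $e^{k}(i_{j},j)\geq 1$ for $k<m(i_{j},j)$, not from the defining product formula alone.
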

	\begin{proof}
		Let $t^{\prime}\in T^{\prime}_{J_{r-1}}$. Let $t^{\prime}=\prod_{l=1}^{r-1}\lambda^{\prime}_{lq}(t_{l})$, where $t_{l}\in \mathbb{G}_{m}$ for all $1\leq l\leq r-1$. Let $y\in \tilde{U}(J_{1})\cap \tilde{U}(J_{2})$. From \cref{section: quotient lemma 5} $(1)$, $\tilde{U}(J_{1})\cap \tilde{U}(J_{2})$ is $T^{\prime}_{J_{r-1}}$-stable. Hence, $t^{\prime}y\in \tilde{U}(J_{1})\cap \tilde{U}(J_{2})$. Now for, $j=0$ we have $\tilde{b}_{J_{1},J_{2}}(0)(t^{\prime}y)=\frac{b(J_{1}(0))(t^{\prime}y)}{b(j_{2}(0))(t^{\prime}y)}=1$ (see \cref{eq 5.1.14}).\\
		Let $1\leq j\leq r-1$. Then, using \cref{section:quotient:lemma 8}, we have
		\begin{align}
			\tilde{b}_{J_{1},J_{2}}(j)(t^{\prime}y)&=\frac{b(J_{1}(j))(t^{\prime}y)}{b(J_{2}(j))(t^{\prime}y)}
			=\frac{\delta_{J_{1}(j)}(t^{\prime})b(J_{1}(j))(y)}{\delta_{J_{2}(j)}(t^{\prime})b(J_{2}(j))(y)}.\label{eq 5.1.39}
		\end{align}
	From \cref{section:quotient:lemma 7}(1), we have  \begin{align}
		\delta_{J_{1}(j)}(t^{\prime})&=\prod_{l=1}^{r-1}t_{l}^{\langle\delta_{J_{1}(j)},\lambda^{\prime}_{lq}\rangle}
		=\prod_{l=1}^{j}t_{l}\label{eq 5.1.40}
	\end{align}
Similarly, from \cref{section:quotient:lemma 7} (1), we have
\begin{align}
	\delta_{J_{2}(j)}(t^{\prime})=\prod_{l=1}^{j}t_{l}\label{eq 5.1.41}
\end{align}
		Therefore, \cref{eq 5.1.40}, \cref{eq 5.1.41}, we have $\delta_{J_{1}(j)}(t^{\prime})=\delta_{J_{2}(j)}(t^{\prime})$. Hence, 
		from \cref{eq 5.1.39}, we have 	$\tilde{b}_{J_{1},J_{2}}(j)(t^{\prime}y)=\tilde{b}_{J_{1},J_{2}}(j)(y)$.
	\end{proof}
	
	Let $J_{1},J_{2}\in\Pi_{j=1}^{r-1}C_{j}$ and $0\leq j\leq r-1$,
	In view of \cref{section:quotient:lemma 9} and \cref{section:quotient:lemma 10}, 
	$\tilde{b}_{J_{1},J_{2}}(j)$ induces a morphism $b_{J_{i},J_{2}}(j):U(J_{1})\cap U(J_{2})\longrightarrow \mathbb{C}^{\times}$ such that \begin{align}\label{eq 5.1.16}
		b_{J_{1},J_{2}}(j)(\pi_{r-1}(y))=\tilde{b}_{J_{1},J_{2}}(y) \text{\ for all \ $y\in \tilde{U}(J_{1})\cap \tilde{U}(J_{2})$}
	\end{align}
	\begin{lemma}\label{section:quotient:lemma 11 }
		For every $J\in \Pi_{j=1}^{r-1}C_{j}$, we have an isomorphism $$h_{J}:\phi_{r}^{-1}(U(J)))\longrightarrow U(J)\times \mathbb{P}^{r(q-1)}$$ such that the following diagram commutes
		\begin{equation}\label{diagram 2}		
		 \begin{tikzcd}
			\phi_{r}^{-1}(U(J)) \arrow{rr}{h_{J}} \arrow[swap]{dr}{\phi_{r}} & & U(J)\times \mathbb{P}^{r(q-1)}\arrow{dl}{pr_{1}} \\%
			&	U(J)
		\end{tikzcd}
		\end{equation}
	\end{lemma}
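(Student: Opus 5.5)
The plan is to write down an explicit $T_{J_{r}}$-equivariant trivialization upstairs on $\tilde{V}(J)$ and then descend it, using the functions $b(J(j))$ to absorb the torus weights. By \cref{section:quotient:lemma 13} we have $\phi_{r}^{-1}(U(J))=V(J)=\pi_{r}(\tilde{V}(J))$, and by \cref{section: quotient lemma 6}(2) we have $\tilde{V}(J)=\pi_{r}^{-1}(V(J))$. So it suffices to build an isomorphism from the geometric quotient $T_{J_{r}}\backslash\tilde{V}(J)$ onto $U(J)\times\mathbb{P}^{r(q-1)}$.

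\textbf{The fibre coordinates.} Write $x=uw_{r}P/P\in\tilde{V}(J)$ with $u=\prod u_{\beta_{i,j}}(a_{i,j})$. The coordinates with $j\leq r-1$ are exactly the coordinates of $x[r-1]\in\tilde{U}(J)$, by \cref{eq 5.1.11}. The remaining coordinates are $(a_{i,r})_{i\in C_{r}}$, and $|C_{r}|=rq+1-r=r(q-1)+1$. Since $x\in V_{r}$, \cref{eq 5.1.5} forces these not all to vanish. By \cref{section:quotient:lemma 1}(2), $t=\prod\lambda_{lq}(t_{l})$ scales $a_{i,r}$ by $\prod_{l=d(i,r)}^{r}t_{l}$. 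By \cref{section:quotient:lemma 8} and \cref{section:quotient:lemma 7}(1), $b(J(d(i,r)-1))(x[r-1])$ scales by $\prod_{l=1}^{d(i,r)-1}t_{l}$ under the corresponding $t^{\prime}$; when $d(i,r)=1$ this is the factor $1$, using \cref{eq 5.1.14}. Hence the ratios
\begin{align*}
c_{i}(x):=\frac{a_{i,r}}{b(J(d(i,r)-1))(x[r-1])},\qquad i\in C_{r},
\end{align*}
all scale by the common factor $\prod_{l=1}^{r}t_{l}$. The denominators are nonvanishing on $\tilde{U}(J)$.

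\textbf{The map $h_{J}$.} Define $\tilde{h}_{J}:\tilde{V}(J)\to U(J)\times\mathbb{P}^{r(q-1)}$ by $x\mapsto(\pi_{r-1}(x[r-1]),[c_{i}(x)]_{i\in C_{r}})$. By the scaling computation and \cref{section:quotient:lemma 2}, $\tilde{h}_{J}$ is constant on $T_{J_{r}}$-orbits. Since $\pi_{r}|_{\tilde{V}(J)}$ is a geometric (categorical) quotient, $\tilde{h}_{J}$ descends to a morphism $h_{J}:V(J)\to U(J)\times\mathbb{P}^{r(q-1)}$. The relation $pr_{1}\circ h_{J}=\phi_{r}$ holds by construction, since $\phi=\phi_{r}\circ\pi_{r}$.

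\textbf{The inverse.} Consider $\tilde{g}:\tilde{U}(J)\times(\mathbb{C}^{C_{r}}\setminus\{0\})\to\tilde{V}(J)$. It sends $(\tilde{u}w_{r-1}Q/Q,(c_{i}))$ to the point whose coordinates for $j\leq r-1$ are those of $\tilde{u}$ and whose last-column coordinates are $a_{i,r}=c_{i}\,b(J(d(i,r)-1))(y)$. The image lies in $\tilde{V}(J)$ by \cref{eq 5.1.5}. Then $\pi_{r}\circ\tilde{g}$ is invariant under $T^{\prime}_{J_{r-1}}\times\mathbb{G}_{m}$, which is checked in two parts.
\begin{enumerate}
\item Replacing $y$ by $t^{\prime}y$ corresponds to acting by $t=\prod_{l\leq r-1}\lambda_{lq}(t_{l})$. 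This uses \cref{section:quotient lemma 3} for the first $r-1$ columns and the scaling of $b$ for the last column.
\item Scaling $(c_{i})$ by $c$ corresponds to acting by $\lambda_{rq}(c)$. Indeed $\langle\beta_{i,r},\lambda_{rq}\rangle=1$ for all $i\in C_{r}$, since $d(i,r)\leq r$, while $\langle\beta_{i,j},\lambda_{rq}\rangle=0$ for $j<r$.
\end{enumerate}
The action of $T^{\prime}_{J_{r-1}}\times\mathbb{G}_{m}$ on $\tilde{U}(J)\times(\mathbb{C}^{C_{r}}\setminus\{0\})$ is a product of free actions with geometric quotients, namely $\pi_{r-1}$ (using \cref{semistability=stability}) and the standard quotient onto $\mathbb{P}^{r(q-1)}$. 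Hence its quotient is $U(J)\times\mathbb{P}^{r(q-1)}$, and $\pi_{r}\circ\tilde{g}$ descends to a morphism $g_{J}$.

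\textbf{The main obstacle.} The hardest step is this descent. It requires that a product of geometric quotients by free torus actions is a geometric quotient by the product torus. I would argue this locally: on the opens where some $c_{i}\neq0$, normalize $c_{i}=1$ to obtain a slice for the $\mathbb{G}_{m}$-factor. After that, verifying $g_{J}\circ h_{J}=\mathrm{id}$ and $h_{J}\circ g_{J}=\mathrm{id}$ is a direct computation on representatives, since the $b$-factors cancel.
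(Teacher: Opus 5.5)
There is a concrete error in the key formula, and as written the construction does not descend. Your overall plan is the paper's: a map on $\tilde V(J)$ that uses the functions $b(J(p-1))$ to equalize torus weights, then descent through $\pi_r$. But you have inverted the weight correction. Put $d=d(i,r)$ and $t=\prod_{l=1}^{r}\lambda_{lq}(t_l)$. By the two facts you quote, $a_{i,r}$ scales by $\prod_{l=d}^{r}t_l$ and $b(J(d-1))(x[r-1])$ scales by $\prod_{l=1}^{d-1}t_l$.

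Your ratio $c_i(x)$ therefore scales by $\bigl(\prod_{l=d}^{r}t_l\bigr)\bigl(\prod_{l=1}^{d-1}t_l\bigr)^{-1}$, which depends on $d$. It is the \emph{product} $b(J(d-1))(x[r-1])\,a_{i,r}$ that carries the common weight $t_1\cdots t_r$. Since $q\geq 2$, every block $J_{p,r}$ is nonempty, so all values $d=1,\ldots,r$ occur. Hence your $\tilde h_J$ is not constant on $T_{J_r}$-orbits.

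A small case shows this. Take $r=q=2$ and $J=(i_1)$. Then $C_2=\{1,2,4\}$ and $b(J(1))=X_{\gamma_{i_1,1}}$ has weight $t_1$. Your point $[a_{1,2}:a_{2,2}:a_{4,2}/a_{i_1,1}]$ is rescaled by $[t_1t_2:t_1t_2:t_2t_1^{-1}]$, which is not a scalar.

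Your inverse inherits the same error. With $a_{i,r}=c_i\,b(J(d-1))(y)$, replacing $y$ by $t'y$ multiplies $a_{i,r}$ by $\prod_{l=1}^{d-1}t_l$. On the other hand, stabilizers in $T'_{J_{r-1}}$ are trivial, so any element of $T_{J_r}$ reproducing the first $r-1$ columns has the form $t\,\lambda_{rq}(s)$ with $t=\prod_{l\leq r-1}\lambda_{lq}(t_l)$. Such an element multiplies $a_{i,r}$ by $s\prod_{l=d}^{r-1}t_l$. The cases $d=1$ and $d=2$ force $t_1^2=1$. So $\pi_r\circ\tilde g$ is not invariant, and your check (1) is false.

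The repair is to set $\tilde h_J(x)=\bigl(\phi_r(\pi_r(x)),[\,b(J(d(i,r)-1))(x[r-1])\,a_{i,r}\,]_{i\in C_r}\bigr)$, which is exactly the paper's map. In the inverse, use $a_{i,r}=c_i/b(J(d(i,r)-1))(y)$, which is precisely the paper's explicit preimage from its surjectivity step, now promoted to a morphism. In your check (1), the element matching $t'y$ is then $t\cdot\lambda_{rq}\bigl((t_1\cdots t_{r-1})^{-1}\bigr)$, not $t$ itself.

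After this correction, your route differs from the paper's only in how the isomorphism is concluded. The paper proves $h_J$ bijective. Injectivity comes from producing $t\in T_{J_r}$ with $t_r=c\,(t_1\cdots t_{r-1})^{-1}$, and surjectivity from the explicit preimage. It then invokes normality of $U(J)\times\mathbb{P}^{r(q-1)}$, a birationality assertion it leaves to the reader, and Zariski's main theorem. Your explicit inverse avoids both of these last two ingredients.

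The descent you call the main obstacle is routine. First descend along the $\mathbb{G}_m$-quotient $\tilde U(J)\times(\mathbb{C}^{C_r}\setminus\{0\})\to\tilde U(J)\times\mathbb{P}^{r(q-1)}$. Then descend along $\pi_{r-1}\times\mathrm{id}$. This is a good, hence categorical, quotient, being the flat base change of the good quotient $\tilde U(J)\to U(J)$.
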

	\begin{proof} Let $J=(i_{1},i_{2},\cdots,i_{r-1})$. 
		Let $\{e_{k,r}: k\in C_{r}\}$ be a basis of $\mathbb{C}^{r(q-1)+1}$.		
		
		{\it Step 1}: We construct a morphism $\tilde{h}_{J}:\tilde{V}(J)\longrightarrow U(J)\times\mathbb{P}^{r(q-1)}$.
		
		Let $x=uw_{r}P/P\in \tilde{V}(J)$. Let $u=\prod_{\substack{1\leq j\leq r\\i\in C_{j}}}u_{\beta_{i,j}}(a_{i,j})$, where $a_{i,j}\in\mathbb{C}$.\\
		Since $x\in\tilde{V}(J)$, from \cref{section:quotient:lemma 13}, $\phi_{r}(\pi_{r}(x))\in U(J)$.\\
		Since $\phi=\phi_{r}\circ\pi_{r}$, from \cref{eq 5.1.8}, we have $\phi_{r}(\pi_{r}(x))=\phi(x)=\pi_{r-1}(x[r-1])$. Since $\pi_{r-1}(x[r-1])\in U(J)$, from \cref{section: quotient lemma 6}(1), we have $x[r-1]\in \tilde{U}(J)$. Hence, for all $1\leq p\leq r$, $b(J(p-1))(x[r-1])\neq 0$.\\
		Since $x\in V_{r}$, there exists $(k_{0},r)\in J_{p,r}$ for some $1\leq p\leq r$ such that $a_{k_{0},r}\neq 0$(see \cref{semistability criterion} and \cref{remark 1}). Therefore, $b(J(p-1))(x[r-1])a_{k_{0},r}\neq 0$.
		
		We define $\tilde{h}_{J}:\tilde{V}(J)\longrightarrow U(J)\times\mathbb{P}^{r(q-1)}$ by 
		\begin{align}\label{eq 5.1.26}
			\tilde{h}_{J}(x)=(\phi_{r}(\pi_{r}(x)),[\sum_{p=1}^{r}(\sum_{\substack{(k,r)\in J_{p,r}}}b(J(p-1))(x[r-1])a_{k,r}e_{k,r})]),
		\end{align}
		for all $x=(\prod_{\substack{1\leq j\leq r\\i\in C_{j}}}u_{\beta_{i,j}}(a_{i,j}))w_{r}P/P\in\tilde{V}(J)$.
		
		{\it Step 2}: We prove $\tilde{h}_{J}$ is constant on $T_{J_{r}}$- orbits.\\
		{\it Proof of step 2}: Let $x=uw_{r}P/P\in \tilde{V}(J)$. Let $u=\prod_{\substack{1\leq j\leq r\\i\in C_{j}}}u_{\beta_{i,j}}(a_{i,j})$, where $a_{i,j}\in\mathbb{C}$.\\
		Let $t=\Pi_{j=1}^{r}\lambda_{jq}(t_{j})\in T_{J_{r}}$, where $t_{j}\in\mathbb{G}_{m}$ for $1\leq j\leq r$. We have $tx=tuw_{r}P/P$, where  $tu=\prod_{\substack{1\leq j\leq r\\i\in C_{j}}}u_{\beta_{i,j}}(\beta_{i,j}(t)a_{i,j})$. Since $\pi_{r}(tx)=x$, we have $\phi_{r}(\pi_{r}(tx))=\phi_{r}(\pi_{r}(x))$. Therefore, we have 
		\begin{align}\label{eq 5.1.33}
			\tilde{h}_{J}(tx)=(\phi_{r}(\pi_{r}(x)),[\sum_{p=1}^{r}(\sum_{(k,r)\in J_{p,r}}b(J(p-1))((tx)[r-1])\beta_{k,r}(t)a_{k,r}e_{k,r})])
		\end{align}
		{\it Claim}: For all $1\leq p\leq r$ and $(k,r)\in J_{p,r}$, we have 
		\begin{align*}
			b(J(p-1))((tx)[r-1])\beta_{k,r}(t)=(t_{1}\cdots t_{r})b(J(p-1))(x[r-1]).
		\end{align*}		
		{\it Proof of claim}: Let $t^{\prime}:=\prod_{j=1}^{r-1}\lambda^{\prime}_{jq}(t_{j})$.

		Then from \cref{section:quotient lemma 3}, we have $\beta_{i,j}(t)=\gamma_{i,j}(t^{\prime})$ for all  $1\leq j\leq r-1$ and $i\in C_{j}$.
		Therefore, \begin{align}
			(tu)[r-1]&=\displaystyle\prod_{\substack{1\leq j\leq r-1\\i\in C_{j}}}u^{\prime}_{\gamma_{i,j}}(\beta_{i,j}(t)a_{i,j})\\
			&=\displaystyle\prod_{\substack{1\leq j\leq r-1\\i\in C_{j}}}u^{\prime}_{\gamma_{i,j}}(\gamma_{i,j}(t^{\prime})a_{i,j})\\
			&=t^{\prime}(u[r-1])\label{eq 5.1.27}
		\end{align}
		Since $(tx)[r-1]=(tu)[r-1]w_{r-1}Q/Q$, using \cref{eq 5.1.27}, we have \begin{align}\label{eq 5.1.28}
			(tx)[r-1]=t^{\prime}u[r-1]w_{r-1}Q/Q=t^{\prime}x[r-1]
		\end{align}
		Therefore, for any $2\leq p\leq r$, we have 
		\begin{align}
			b(J(p-1))((tx)[r-1])&=b(J(p-1))(t^{\prime}x[r-1]) (\text{\ using \cref{eq 5.1.28}\ })\\
			&=\delta_{J(p-1)}(t^{\prime})b(J(p-1))(x[r-1])(\text{using \cref{section:quotient:lemma 8}}) \\
			&=(\prod_{k=1}^{p-1}t_{k})b(J(p-1))(x[r-1]) (\text{\ using \cref{section:quotient:lemma 7}(1)})\label{eq 5.1.29}
		\end{align}
			From \cref{section:quotient:lemma 1}(2), for all  $(k,r)\in J_{p,r}$, we have \begin{align}\label{eq 5.1.30}
				\beta_{k,r}(t)=\prod_{k=p}^{r}t_{k}
			\end{align}
			Hence, from \cref{eq 5.1.29}, and \cref{eq 5.1.30}, for $2\leq p\leq r$ and $(k,r)\in J_{p,r}$, we have 
			\begin{align}\label{eq 5.1.31}
				b(J(p-1))((tx)[r-1])\beta_{k,r}(t)=(t_{1}\cdots t_{r})b(J(p-1))(x[r-1])
			\end{align}
			Let $p=1$. For any $(k,r)\in J_{1,r}$, we have $\beta_{k,r}(t)=\Pi_{j=1}^{r}t_{j}$ (see \cref{section:quotient:lemma 1}(2)).
			Therefore, using  \cref{eq 5.1.14}, we have  \begin{align}\label{eq 5.1.32}
				b(J(0))((tx)[r-1])\beta_{k,r}(t)=(t_{1}\cdots t_{r})b(J(0))(x[r-1])
			\end{align}
			The claim follows from \cref{eq 5.1.31} and \cref{eq 5.1.32}.
			
			Now using above claim, from \cref{eq 5.1.33}, we have
			\begin{align*}
				\tilde{h}_{J}(tx)&=(\phi_{r}(\pi_{r}(x)),[(t_{1}\cdots t_{r})\sum_{p=1}^{r}(\sum_{\substack{(k,r)\in J_{p,r}}}b(J(p-1))(x[r-1])a_{k,r}e_{k,r})])\\
				&=(\phi_{r}(\pi_{r}(x)),[\sum_{p=1}^{r}(\sum_{\substack{(k,r)\in J_{p,r}}}b(J(p-1))(x[r-1])a_{k,r}e_{k,r})])\\
				&=\tilde{h}_{J}(x) (\text{\ using \cref{eq 5.1.26}\ })
			\end{align*}
			This proves that $\tilde{h}_{J}$ is constant on $T_{J_{r}}$-orbits.\\
			Hence, $\tilde{h}_{J}$
			induces a morphism  $h_{J}:\pi_{r}(\tilde{V}(J))\longrightarrow U(J)\times \mathbb{P}^{r(q-1)}$ such that $h_{J}(\pi_{r}(x))=\tilde{h}_{J}(x)$ for all $x\in \tilde{V}(J)$.
			Recall that $V(J)=\pi_{r}(\tilde{V}(J))$. Further, in view of \cref{section:quotient:lemma 13}, we have $\phi_{r}^{-1}(U(J))=V(J)$. \\
			Thus, we have $h_{J}:\phi_{r}^{-1}(U(J))\longrightarrow U(J)\times \mathbb{P}^{r(q-1)}$ such that $h_{J}\circ\pi_{r}=\tilde{h}_{J}$.
			
			Let $x\in  \tilde{V}(J)$. Then we have $pr_{1}(h_{J}(\pi_{r}(x)))=pr_{1}(\tilde{h}_{J}(x))=\phi_{r}(\pi_{r}(x))$. Hence, $\phi_{r}=pr_{1}\circ h_{J}$.
			Therefore, the diagram in \cref{diagram 2} commutes.
			
			{\it $h_{J}$ is injective}: Let $x_{1},x_{2}\in \tilde{V}(J)$. Let $x_{1}=u_{1}w_{r}P/P$, $x_{2}=u_{2}w_{r}P/P$ where $u_{1}=\Pi_{\substack{1\leq j\leq r\\i\in C_{j}}}u_{\beta_{i,j}}(a_{i,j})$ and $u_{2}=\Pi_{\substack{1\leq j\leq r\\i\in C_{j}}}u_{\beta_{i,j}}(b_{i,j})$, $a_{i,j},b_{i,j}\in\mathbb{C}$.
			
			Assume that $h_{J}(\pi_{r}(x_{1}))=h_{J}(\pi_{r}(x_{2}))$. Since $\tilde{h}_{J}=h_{J}\circ\pi_r$, from \cref{eq 5.1.26}, we have \begin{align}\label{eq 5.1.43}
				\phi_{r}(\pi_{r}(x_{1}))=\phi_{r}(\pi_{r}(x_{2})),
			\end{align}
		and
			\begin{align}\label{eq 5.1.19}
				[\sum_{p=1}^{r}(\sum_{\substack{(k,r)\in J_{p,r}}}b(J(p-1))(x_{1}[r-1])a_{k,r}e_{k,r})]=[\sum_{p=1}^{r}(\sum_{\substack{(k,r)\in J_{p,r}}}b(J(p-1))(x_{2}[r-1])b_{k,r}e_{k,r})]
			\end{align}
			Since $\phi=\phi_{r}\circ\pi_r$, from \cref{eq 5.1.8} and \cref{eq 5.1.43} we have $\pi_{r-1}(x_{1}[r-1])=\pi_{r-1}(x_{2}[r-1])$. Hence, there exists $t^{\prime}\in T^{\prime}_{J_{r-1}}$ such that $x_{2}[r-1]=t^{\prime}x_{1}[r-1]$.\\
			Let $t^{\prime}=\prod_{j=1}^{r-1}\lambda^{\prime}_{jq}(t_{j})$, where $t_{j}\in\mathbb{G}_{m}$ for $1\leq j\leq r-1$. 
			
			From \cref{eq 5.1.18}, we have $u_{2}[r-1]w_{r-1}Q/Q=t^{\prime}u_{1}[r-1]w_{r-1}Q/Q$. Now from
			\cref{eq 5.1.38}, we have 
			\begin{align*}
				(\prod_{\substack{1\leq j\leq r-1\\i\in C_{j}}}u^{\prime}_{\gamma_{i,j}}(b_{i,j}))w_{r-1}Q/Q&=t^{\prime}(\prod_{\substack{1\leq j\leq r-1\\i\in C_{j}}}u^{\prime}_{\gamma_{i,j}}(a_{i,j}))w_{r-1}Q/Q\\&=\prod_{\substack{1\leq j\leq r-1\\i\in C_{j}}}u^{\prime}_{\gamma_{i,j}}(\gamma_{i,j}(t^{\prime})a_{i,j})w_{r-1}Q/Q
			\end{align*}

			Hence, for every $1\leq j\leq r-1$ and $i\in C_{j}$, we have 
			\begin{align}\label{eq 5.1.23}
				b_{i,j}=\gamma_{i,j}(t^{\prime})a_{i,j}
			\end{align}
			From \cref{eq 5.1.19}, there exists  $c\in\mathbb{C}^{\times}$ such that, for all $1\leq p\leq r$ and for all $(k,r)\in J_{p,r}$ we have \begin{align}\label{eq 5.1.21}
				b(J(p-1))(x_{2}[r-1])b_{k,r}=c\cdot b(J(p-1))(x_{1}[r-1])a_{k,r}
			\end{align}
			Define $t_{r}:=c(\Pi_{j=1}^{r-1}t_{j})^{-1}$. Then, we have $c=\prod_{j=1}^{r}t_{j}$. Let $t=\prod_{l=1}^{r}\lambda_{lq}(t_{l})$. Then we have $t\in T_{J_{r}}$.
			
			Let $p=1$. Then using \cref{eq 5.1.14} and \cref{eq 5.1.21}, for all $(k,r)\in J_{1,r}$, we have \begin{align}\label{eq 5.1.20}
				b_{k,r}=ca_{k,r}=
				(t_{1}\cdots t_{r})a_{k,r}
			\end{align} 
			From \cref{section:quotient:lemma 1} $(2)$, for all $(k,r)\in J_{1,r}$, we have $\beta_{k,r}(t)=\Pi_{j=1}^{r}t_{j}$. Therefore, from \cref{eq 5.1.20}, for all $(k,r)\in J_{1,r}$, we have \begin{align}\label{eq 5.1.42}
				b_{k,r}=\beta_{k,r}(t)a_{k,r}
			\end{align}
			Let $2\leq p\leq r$. Since $x_{2}[r-1]=t^{\prime}x_{1}[r-1]$, from \cref{section:quotient:lemma 8}, we have \begin{align*}
				b(J(p-1))(x_{2}[r-1])&=	b(J(p-1))(t^{\prime}x_{1}[r-1])\\
				&=\delta_{J(p-1)}(t^{\prime})b(J(p-1))(x_{1}[r-1])
			\end{align*} 
			Therefore, from \cref{eq 5.1.21}, we have  $b_{k,r}=c(\delta_{J(p-1)}(t^{\prime}))^{-1}a_{k,r}$. From \cref{section:quotient:lemma 7}(1), $\delta_{J(p-1)}(t^{\prime})=(\Pi_{j=1}^{p-1}t_{j})$. Therefore, $c(\delta_{J(p-1)}(t^{\prime}))^{-1}=\Pi_{j=p}^{r}t_{j}$. Hence,
			\begin{align}\label{eq 5.1.22}
				b_{k,r}=(\Pi_{j=p}^{r}t_{j})a_{k,r}
			\end{align}

			Therefore, using \cref{section:quotient:lemma 1}(2), for all $(k,r)\in J_{p,r}$,  we have $\beta_{k,r}(t)=\Pi_{j=p}^{r}t_{j}$. Hence, for $2\leq p\leq r$ and $(k,r)\in J_{p,r}$, from \cref{eq 5.1.22},we have 
			\begin{align}\label{eq 5.1.24}
				b_{k,r}=\beta_{k,r}(t)a_{k,r}
			\end{align}
			From  \cref{eq 5.1.23} and \cref{section:quotient lemma 3}, for $1\leq j\leq r-1$ and $i\in C_{j}$, we have \begin{align}\label{eq 5.1.25}
				b_{i,j}=\beta_{i,j}(t)a_{i,j}
			\end{align}
			Thus, from \cref{eq 5.1.42}, \cref{eq 5.1.24} and \cref{eq 5.1.25}, for all $1\leq j\leq r$ and $i\in C_{j}$, we have $b_{i,j}=\beta_{i,j}(t)a_{i,j}$. Hence, we have
			$x_{2}=tx_{1}$ for $t\in T_{J_{r}}$. Therefore, $\pi_{r}(x_{1})=\pi_{r}(x_{2})$.
			
			{\it $h_{J}$ is surjective}: Let $(\pi_{r-1}(y),[\sum_{p=1}^{r}(\sum_{(k,r)\in J_{p,r}}c_{k,r}e_{k,r})])\in U(J)\times \mathbb{P}^{r(q-1)}$, where $y=u^{\prime}w_{r-1}Q/Q\in\tilde{U}(J)$, $u^{\prime}=\prod_{\substack{1\leq j\leq r-1\\i\in C_{j}}}u^{\prime}_{\gamma_{i,j}}(a_{i,j})$ and $a_{i,j},c_{k,r}\in\mathbb{C}$, for all $1\leq j\leq r-1$, $i\in C_{j}$, $k\in C_{r}$.
			
			Let \begin{align}\label{eq 5.1.34}
				u=(\displaystyle\prod_{\substack{1\leq j\leq r-1\\i\in C_{j}}}u_{\beta_{i,j}}(a_{i,j}))(\prod_{p=1}^{r}(\prod_{(k,r)\in J_{p,r}}u_{\beta_{k,r}}(\frac{c_{k,r}}{b(J(p-1))(y)})))
			\end{align}
			
			Let $x=uw_{r}P/P$. 
			We have $c_{k_{0},r}\neq 0$ for some $(k_{0},r)\in J_{p,r}$. Since $b(J(p-1))(y)\neq 0$, we have $\frac{c_{k_{0},r}}{b(J(p-1))(y)}\neq 0$. Since $y\in \tilde{U}(J)$, we have  $a_{i_{j},j}\neq 0$ for all $1\leq j\leq r-1$. Hence, using \cref{semistability criterion} $x\in V_{r}$. Since $X_{\beta_{i_{j},j}}(x)=a_{i_{j},j}\neq 0$ for all $1\leq j\leq r-1$, from \cref{eq 5.1.10}, we have $x\in \tilde{V}(J)$.\\
			Since $x[r-1]=y$, we have $\phi_{r}(\pi_{r}(x))=\pi_{r-1}(x[r-1])=\pi_{r-1}(y)$. Therefore, from \cref{eq 5.1.26}, we have
			\begin{align*}
				\tilde{h}_{J}(x)&=(\pi_{r-1}(y),[\sum_{p=1}^{r}(\sum_{(k,r)\in J_{p,r}}b(J(p-1))(y)\frac{c_{k,r}}{b(J(p-1))(y)}e_{k,r})])\\
				&=(\pi_{r-1}(y),[\sum_{p=1}^{r}(\sum_{(k,r)\in J_{p,r}}c_{k,r}e_{k,r})])
			\end{align*}
			Thus, $\tilde{h}_{J}$ is surjective.  Hence $h_{J}$ is surjective. Therefore, $h_{J}$ is bijective. We can see that $h_{J}$ is birational.
			Since $Y_{r-1}$ is normal and $U(J)$ is an open subset of $Y_{r-1}$, $U(J)$ is normal as well. Hence, $U(J)\times\mathbb{P}^{r(q-1)}$ is normal. Therefore, using Zariski's main theorem, we have $h_{J}$ is an isomorphism (see \cite[Theorem 5.2.8]{springer}).
		\end{proof}		
		\begin{lemma} \label{section:quotient:lemma 12}
			Let $J_{1},J_{2}\in \Pi_{j=1}^{r-1}C_{j}$. Then the transition automorphism
			$$h_{J_{1}}\circ h_{J_{2}}^{-1}:(U(J_{1})\cap U(J_{2}))\times \mathbb{P}^{r(q-1)}\longrightarrow (U(J_{1})\cap U(J_{2}))\times \mathbb{P}^{r(q-1)}$$ is given by 	 \begin{align*}
				h_{J_{1}}\circ h_{J_{2}}^{-1}(&\pi_{r-1}(y),[\sum_{p=1}^{r}(\sum_{(k,r)\in J_{p,r}}x_{k,r}e_{k,r})])=(\pi_{r-1}(y),[\sum_{p=1}^{r}(\sum_{(k,r)\in J_{p,r}}y_{k,r}e_{k,r})]),\\&\text{for all $y\in \tilde{U}(J_{1})\cap \tilde{U}(J_{2})$ and $[\sum_{p=1}^{r}(\sum_{(k,r)\in J_{p,r}}x_{k,r}e_{k,r})]\in\mathbb{P}^{r(q-1)}$}
			\end{align*}
			where, for all $1\leq p\leq r$ and $(k,r)\in J_{p,r}$,
			$y_{k,r}=b_{J_{1},J_{2}}(p-1)(\pi_{r-1}(y))x_{k,r}$.
		\end{lemma}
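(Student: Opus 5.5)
We compute $h_{J_{1}}\circ h_{J_{2}}^{-1}$ by lifting to $V_{r}$. Fix $y\in \tilde{U}(J_{1})\cap \tilde{U}(J_{2})$ and a point $[\sum_{p=1}^{r}\sum_{(k,r)\in J_{p,r}}x_{k,r}e_{k,r}]\in\mathbb{P}^{r(q-1)}$. By \cref{section:quotient:lemma 10}(1), every point of $U(J_{1})\cap U(J_{2})$ has the form $\pi_{r-1}(y)$ with such a $y$, so this suffices. The key steps are as follows.

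\emph{Step 1: choose a preimage under $h_{J_{2}}$.} As in the surjectivity part of the proof of \cref{section:quotient:lemma 11 }, write $y=u^{\prime}w_{r-1}Q/Q$ with $u^{\prime}=\prod u^{\prime}_{\gamma_{i,j}}(a_{i,j})$. Define $u$ by \cref{eq 5.1.34} using $J_{2}$, with $c_{k,r}=x_{k,r}$; that is, the $(k,r)$-coordinate of $u$ is $x_{k,r}/b(J_{2}(p-1))(y)$ for $(k,r)\in J_{p,r}$. Put $x=uw_{r}P/P$. Then $x\in\tilde{V}(J_{2})$, $x[r-1]=y$, and $h_{J_{2}}(\pi_{r}(x))$ is the given point. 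Since $h_{J_{2}}$ is an isomorphism, $h_{J_{2}}^{-1}$ sends the given point to $\pi_{r}(x)$.

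\emph{Step 2: check that $x$ also lies in $\tilde{V}(J_{1})$.} Since $x[r-1]=y\in\tilde{U}(J_{1})$, \cref{eq 5.1.11} gives $X_{\beta_{i_{j},j}}(u)=X_{\gamma_{i_{j},j}}(y)\neq 0$ for the entries $i_{j}$ of $J_{1}$. Hence $x\in\tilde{V}(J_{1})$, and therefore $\pi_{r}(x)\in V(J_{1})=\phi_{r}^{-1}(U(J_{1}))$ by \cref{section:quotient:lemma 13}.

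\emph{Step 3: apply $h_{J_{1}}$.} Using $h_{J_{1}}\circ\pi_{r}=\tilde{h}_{J_{1}}$ and \cref{eq 5.1.26}, we get
\[
h_{J_{1}}(\pi_{r}(x))=\Bigl(\pi_{r-1}(y),\Bigl[\sum_{p=1}^{r}\sum_{(k,r)\in J_{p,r}}b(J_{1}(p-1))(y)\,\frac{x_{k,r}}{b(J_{2}(p-1))(y)}\,e_{k,r}\Bigr]\Bigr).
\]
By \cref{eq 5.1.15}, the coefficient of $e_{k,r}$ is $\tilde{b}_{J_{1},J_{2}}(p-1)(y)\,x_{k,r}$. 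By \cref{eq 5.1.16}, this equals $b_{J_{1},J_{2}}(p-1)(\pi_{r-1}(y))\,x_{k,r}$. This is exactly the claimed formula, and for $p=1$ the factor is $1$ by \cref{eq 5.1.14}.

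The only real subtlety is well-definedness: the answer must not depend on the chosen lift $y$. This is guaranteed because \cref{section:quotient:lemma 9} shows $\tilde{b}_{J_{1},J_{2}}(p-1)$ is constant on $T^{\prime}_{J_{r-1}}$-orbits, so $b_{J_{1},J_{2}}(p-1)$ descends to $U(J_{1})\cap U(J_{2})$. The remaining steps are direct substitution.
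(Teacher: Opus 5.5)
Your proposal is correct and follows the paper's proof essentially step for step. You take the explicit $h_{J_{2}}$-preimage from \cref{eq 5.1.34} with $x[r-1]=y$, evaluate $\tilde{h}_{J_{1}}$ at it via \cref{eq 5.1.26}, and identify the ratio $b(J_{1}(p-1))(y)/b(J_{2}(p-1))(y)$ with $b_{J_{1},J_{2}}(p-1)(\pi_{r-1}(y))$ via \cref{eq 5.1.15} and \cref{eq 5.1.16}; your Step 2, checking that $x\in\tilde{V}(J_{1})$ so that $h_{J_{1}}\circ\pi_{r}=\tilde{h}_{J_{1}}$ may be applied at $x$, makes explicit a point the paper leaves implicit.
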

		\begin{proof}Let $y=u^{\prime}w_{r-1}Q/Q$, where $u^{\prime}\in \tilde{U}(J_{1})\cap\tilde{U}(J_{2})$. Let $u^{\prime}=\prod_{\substack{1\leq j\leq r-1\\i\in C_{j}}}u^{\prime}_{\gamma_{i,j}}(a_{i,j})$, where $a_{i,j}\in\mathbb{C}$. Then from the expression in \cref{eq 5.1.34}, we have \begin{align*}
				h_{J_{2}}^{-1}(\pi_{r-1}(y),[\sum_{p=1}^{r}(\sum_{(k,r)\in J_{p,r}}x_{k,r}e_{k,r})])=\pi_{r}(uw_{r}P/P),
			\end{align*}
			where 
			$u=(\prod_{\substack{1\leq j\leq r-1\\i\in C_{j}}}u_{\beta_{i,j}}(a_{i,j}))(\prod_{p=1}^{r}(\prod_{(k,r)\in J_{p,r}}u_{\beta_{k,r}}(\frac{x_{k,r}}{b(J_{2}(p-1))(y)})))$.
			
			Let $x=uw_{r}P/P$. Then, we have $x[r-1]=y$. Hence $\phi_{r}(\pi_{r}(x))=\pi_{r-1}(x[r-1])=\pi_{r-1}(y)$. Since $h_{J}\circ\pi_{r}=\tilde{h}_{J}$, from \cref{eq 5.1.26}), we have 
			\begin{align}\label{eq 5.1.17}
				h_{J_{1}}(\pi_{r}(x))&=(\pi_{r-1}(y),[\sum_{p=1}^{r}(\sum_{(k,r)\in J_{p,r}}\frac{b(J_{1}(p-1))(y)}{b(J_{2}(p-1))(y)}x_{k,r}e_{k,r})])
			\end{align}
			From \cref{eq 5.1.15} and \cref{eq 5.1.16}), we have $\frac{b(J_{1}(p-1))(y)}{b(J_{2}(p-1))(y)}= b_{J_{1},J_{2}}(p-1)(\pi_{r-1}(y))$. Hence the proof follows from \cref{eq 5.1.17}.
		\end{proof}
		\begin{theorem}\label{main result 1}
			$Y_{r}$ is a $\mathbb{P}^{r(q-1)}$-bundle over $Y_{r-1}$.
		\end{theorem}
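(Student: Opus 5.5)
The plan is to assemble the local results already established into a locally trivial fibration. The morphism $\phi_{r}:Y_{r}\longrightarrow Y_{r-1}$ was built in \cref{section 5.1}. The sets $U(J)$, $J\in\Pi_{j=1}^{r-1}C_{j}$, form an open cover of $Y_{r-1}$, since $V_{r-1}=\bigcup_{J}\tilde{U}(J)$ by \cref{semistability criterion}. By \cref{section:quotient:lemma 13}, $\phi_{r}^{-1}(U(J))=V(J)$, and these sets cover $Y_{r}$. So it suffices to produce, over each $U(J)$, an isomorphism with $U(J)\times\mathbb{P}^{r(q-1)}$ commuting with the projections, and then to check that the transition maps are linear on fibres.

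First I would invoke \cref{section:quotient:lemma 11 }. It gives, for each $J$, an isomorphism $h_{J}:\phi_{r}^{-1}(U(J))\to U(J)\times\mathbb{P}^{r(q-1)}$ with $pr_{1}\circ h_{J}=\phi_{r}$. This already shows that $\phi_{r}$ is Zariski locally trivial with fibre $\mathbb{P}^{r(q-1)}$.

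Second, for the bundle structure (transition functions in $PGL$), I would use \cref{section:quotient:lemma 12}. On $(U(J_{1})\cap U(J_{2}))\times\mathbb{P}^{r(q-1)}$, the map $h_{J_{1}}\circ h_{J_{2}}^{-1}$ acts on the fibre over $\pi_{r-1}(y)$ by the diagonal matrix whose entry on $e_{k,r}$, for $(k,r)\in J_{p,r}$, is $b_{J_{1},J_{2}}(p-1)(\pi_{r-1}(y))$. These entries are regular nowhere-vanishing functions on $U(J_{1})\cap U(J_{2})$, by \cref{section:quotient:lemma 9}, \cref{section:quotient:lemma 10} and \cref{eq 5.1.16}. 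So the transition map is given by a morphism $g_{J_{1}J_{2}}:U(J_{1})\cap U(J_{2})\to GL(r(q-1)+1,\mathbb{C})$ landing in the diagonal torus. It is a morphism because $\pi_{r-1}$ restricted to $\tilde{U}(J_{1})\cap\tilde{U}(J_{2})$ is a geometric quotient, so $T'_{J_{r-1}}$-invariant regular functions descend.

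Third, I would check the cocycle condition on triple overlaps, using \cref{section:quotient:lemma 10}(2) to lift points to $\tilde{U}(J_{1})\cap\tilde{U}(J_{2})\cap\tilde{U}(J_{3})$. At the lifted point the condition reads
\[\frac{b(J_{1}(p))}{b(J_{2}(p))}\cdot\frac{b(J_{2}(p))}{b(J_{3}(p))}=\frac{b(J_{1}(p))}{b(J_{3}(p))},\]
which is immediate. Hence the $g_{J_{1}J_{2}}$ define a rank $r(q-1)+1$ vector bundle, and by the gluing in \cref{definition:projective bundle}, $Y_{r}\cong\mathbb{P}(\mathcal{V})$ over $Y_{r-1}$. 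The only delicate point is that the $b$-ratios are genuinely regular on the quotient and that $h_{J}$ is an isomorphism; both are already supplied by the earlier lemmas, so I expect no serious obstacle beyond bookkeeping.
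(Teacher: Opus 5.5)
Your proposal is correct and follows the paper's proof. Both arguments rest on the local trivializations $h_J$ over the cover $\{U(J)\}$ and on the diagonal transition formula of \cref{section:quotient:lemma 12}, whose entries $b_{J_1,J_2}(p-1)$ are regular. The paper then concludes directly from Hartshorne's definition of a $\mathbb{P}^n$-bundle (linear transition automorphisms over affine opens, Ch.~II, Ex.~7.10). Your extra cocycle check and gluing into $\mathbb{P}(\mathcal{V})$ is not needed for this statement, but it is exactly the argument the paper carries out next in \cref{section:projective bundle:lemma 1} and \cref{section:projective bundle:th 1}.
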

		\begin{proof}From \cref{section 5.1}, we have an open covering $Y_{r-1}=\cup_{J\in\Pi_{j=1}^{r-1}C_{j}}U(J)$ and a morphism $\phi_{r}:Y_{r}\longrightarrow Y_{r-1}$. From \cref{section:quotient:lemma 11 }, we have $Y_{r}$ is locally isomorphic to $U(J)\times\mathbb{P}^{r(q-1)}$. Let $J_{1},J_{2}\in\Pi_{j=1}^{r-1}C_{j}$. Then from \cref{section:quotient:lemma 12}, for any affine open subset $Z\subseteq U(J_{1})\cap U(J_{2})$ we have, the transition automorphism $h_{J_{1}}\circ h_{J_{2}}^{-1}:Z\times \mathbb{P}^{r(q-1)}\longrightarrow Z\times\mathbb{P}^{r(q-1)}$ is given by  
			\begin{align*}
				h_{J_{1}}\circ h_{J_{2}}^{-1}(z,[\sum_{p=1}^{r}(\sum_{(k,r)\in J_{p,r}}x_{k,r}e_{k,r})])=(z,[\sum_{p=1}^{r}(\sum_{(k,r)\in J_{p,r}}y_{k,r}e_{k,r})]), \text{ for all $z\in Z$}
			\end{align*}
		where $y_{k,r}=b_{J_{1},J_{2}}(p-1)\!\restriction_Z(z)x_{k,r}$ for all $(k,r)\in J_{p,r}$. Since for all $1\leq p\leq r$, $b_{J_{1},J_{2}}(p-1)\in \mathcal{O}_{U(J_{1})\cap U(J_{2})}(U(J_{1})\cap U(J_{2}))$, we have  $b_{J_{1},J_{2}}(p-1)\!\restriction_Z\in\mathbb{C}[Z]$. Therefore, the transition automorphis on $Z\times\mathbb{P}^{r(q-1)}$ are given by $\mathbb{C}[Z]$-linear automorphism of $(\mathbb{C}[Z])[X_{0},X_{1},\ldots,X_{r(q-1)}]$. Now the proof follows from \cite[ch.2, exercise 7.10]{Ha}.	
		\end{proof} 
		\section{Description of $Y_{r}$ as projective bundle over $Y_{r-1}$}
		In this section, we prove that there exists line bundles $\mathcal{L}_{0},\mathcal{L}_{1},\ldots,\mathcal{L}_{r-1}$ on $Y_{r-1}$ such that $Y_{r}$ is isomorphic to the projective space bundle $\mathbb{P}(\mathcal{L}_{0}^{\oplus q}\oplus(\bigoplus_{j=1}^{r-1} \mathcal{L}_{j}^{\oplus q-1}))$ over $Y_{r-1}$. We follow the same notations as in \cref{section 5}.
	\begin{lemma}\label{section:projective bundle:lemma 1}
		For any three $J_{1},J_{2},J_{3}\in  \Pi_{k=1}^{r-1}C_{k}$, and any $0\leq j\leq r-1$,  we have 
		\begin{align*}
			b_{J_{1},J_{2}}(j)^{-1}(z)&=b_{J_{2},J_{1}}(j)(z)\ \text{for all $z\in U(J_{1})\cap U(J_{2})$}\\
			b_{J_{1},J_{2}}(j)(z)\cdot b_{J_{2},J_{3}}(j)(z)&=b_{J_{1},J_{3}}(j)(z)\ \text{for all $z\in U(J_{1})\cap U(J_{2})\cap U(J_{3})$}\\
			b_{J_{1},J_{1}}(j)(z)&=1\ \text{for all $z\in U(J_{1})$}
		\end{align*}
	\end{lemma}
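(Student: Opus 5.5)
The plan is to prove all three identities first upstairs, for the functions $\tilde{b}_{J_{1},J_{2}}(j)$ on the $T^{\prime}_{J_{r-1}}$-stable open sets $\tilde{U}(J_{1})\cap\tilde{U}(J_{2})$ and $\tilde{U}(J_{1})\cap\tilde{U}(J_{2})\cap\tilde{U}(J_{3})$. We then push them down to $Y_{r-1}$ using surjectivity of $\pi_{r-1}$ onto the relevant intersections (\cref{section:quotient:lemma 10}) together with the defining relation \cref{eq 5.1.16}.

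\emph{Upstairs.} Fix $0\leq j\leq r-1$ and a point $y\in\tilde{U}(J_{1})\cap\tilde{U}(J_{2})$. By \cref{eq 5.1.15} we have $\tilde{b}_{J_{1},J_{2}}(j)(y)=b(J_{1}(j))(y)/b(J_{2}(j))(y)$. Both numerator and denominator are nonzero because $y$ lies in both $\tilde{U}(J_{1})$ and $\tilde{U}(J_{2})$. Indeed, for $j\geq1$, each factor $X_{\gamma_{i_{e^{k}},e^{k}}}$ appearing in $b(J(j))$ is one of the coordinates $X_{\gamma_{i_{l},l}}$ with $1\leq l\leq j$, since $e^{k}(i_{j},j)\in\{1,\ldots,j\}$ for $k<m(i_{j},j)$ by \cref{lemma 2.1} and \cref{e^{m}=0}. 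Each such coordinate is nonzero on $\tilde{U}(J)$; this is the same fact already used in the proof of \cref{section:quotient:lemma 11 }. For $j=0$ both values are $1$ by \cref{eq 5.1.14}. Hence the three identities become trivial identities of nonzero complex numbers:
\[
\frac{b(J_{1}(j))(y)}{b(J_{2}(j))(y)}\cdot\frac{b(J_{2}(j))(y)}{b(J_{3}(j))(y)}=\frac{b(J_{1}(j))(y)}{b(J_{3}(j))(y)}.
\]
The inverse identity and the identity $\tilde{b}_{J_{1},J_{1}}(j)=1$ follow in the same way.

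\emph{Descent.} Let $z\in U(J_{1})\cap U(J_{2})\cap U(J_{3})$. By \cref{section:quotient:lemma 10}(2), we can write $z=\pi_{r-1}(y)$ with $y\in\tilde{U}(J_{1})\cap\tilde{U}(J_{2})\cap\tilde{U}(J_{3})$. Then \cref{eq 5.1.16} gives $b_{J_{a},J_{b}}(j)(z)=\tilde{b}_{J_{a},J_{b}}(j)(y)$ for each pair $(a,b)$, so the cocycle identity transfers directly. The two-index identities are handled the same way using \cref{section:quotient:lemma 10}(1). For $J_{1}=J_{2}$ we use that $\pi_{r-1}(\tilde{U}(J_{1}))=U(J_{1})$ by definition.

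The only point needing care is the nonvanishing of $b(J(j))$ on $\tilde{U}(J)$, that is, that all indices $e^{k}(i_{j},j)$ stay in the range $1,\ldots,j$. This requires checking that $e^{k}(i_{j},j)\geq1$ for $k\leq m(i_{j},j)-1$, which is exactly the strict decrease from \cref{lemma 2.1} combined with $e^{m(i_{j},j)}(i_{j},j)=0$ from \cref{e^{m}=0}. Well-definedness of the descended functions is already furnished by \cref{section:quotient:lemma 9}, so there is no real obstacle beyond this check.
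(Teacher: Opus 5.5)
Your proposal is correct and follows the paper's own proof. Write $z=\pi_{r-1}(y)$ with $y$ in the appropriate intersection of the $\tilde{U}(J_i)$ via \cref{section:quotient:lemma 10} (or via the definition of $U(J_1)$ when $J_1=J_2$), and use \cref{eq 5.1.15} and \cref{eq 5.1.16} to reduce each identity to an identity of ratios of the numbers $b(J_i(j))(y)$. Your explicit check that $1\le e^{k}(i_{j},j)\le j$ for $k<m(i_{j},j)$, so that $b(J(j))$ never vanishes on $\tilde{U}(J)$, is a point the paper only uses implicitly (it simply declares $\tilde{b}_{J_1,J_2}(j)$ to be $\mathbb{C}^{\times}$-valued), and it is a worthwhile addition.
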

	\begin{proof} 
		Let $z\in U(J_{1})\cap U(J_{2})$. From \cref{section:quotient:lemma 10}(1), $z=\pi_{r-1}(y)$, for some  $y\in \tilde{U}(J_{1})\cap \tilde{U}(J_{2})$. Let $0\leq j\leq r-1$. From \cref{eq 5.1.15} and \cref{eq 5.1.16}, we have
		\begin{align*}
			b_{J_{2},J_{1}}(j)(z)=\frac{b(J_{2}(j))(y)}{b(J_{1}(j))(y)}
			=(\frac{b(J_{1}(j))(y)}{b(J_{2}(j))(y)})^{-1}
			&=(\tilde{b}_{J_{1},J_{2}}(j)(y))^{-1} (\text{\ using \cref{eq 5.1.15}})\\
			&=(b_{J_{1},J_{2}}(j)(\pi_{r-1}(y)))^{-1} (\text{\ using \cref{eq 5.1.16}}).
		\end{align*}
		Let $z\in U(J_{1})\cap U(J_{2})\cap U(J_{3})$. 
		From \cref{section:quotient:lemma 10}(2), we have $z=\pi_{r-1}(y)$ for some $y\in \tilde{U}(J_{1})\cap \tilde{U}(J_{2})\cap \tilde{U}(J_{3})$. From \cref{eq 5.1.15} and \cref{eq 5.1.16}, we have \begin{align*}
			b_{J_{1},J_{2}}(j)(z)b_{J_{2},J_{3}}(j)(z)=\frac{b(J_{1}(j))(y)}{b(J_{2}(j))(y)}\frac{b(J_{2}(j))(y)}{b(J_{3}(j))(y)}
			&=\frac{b(J_{1}(j))(y)}{b(J_{3}(j))(y)}\\
			&=\tilde{b}_{J_{1},J_{3}}(j)(y) \text{(see \cref{eq 5.1.15})}\\
			&=b_{J_{1},J_{3}}(j)(z) \text{(see \cref{eq 5.1.16})}.
		\end{align*}
	Let $z=\pi_{r-1}(y)\in U(J_{1})$ for some $y\in \tilde{U}(J_{1})$. Then from  \cref{eq 5.1.16} and \cref{eq 5.1.15}, we have $b_{J_{1},J_{1}}(j)(z)=\frac{b(J_{1}(j))(y)}{b(J_{1}(j))(y)}=1$.
	\end{proof}
	Fix $0\leq j\leq r-1$. Using \cref{section:projective bundle:lemma 1}, from \cite[section 1.3]{LePotier} we can see
	$$\{b_{J_{1},J_{2}}(j):U(J_{1})\cap U(J_{2})\longrightarrow\mathbb{C}^{\times}|J_{1},J_{2}\in\Pi_{j=1}^{r-1}C_{j}\}$$ gives transition functions for a line bundle on $Y_{r-1}$. We denote the corresponding line bundle by $\pi_{j}:\mathcal{L}_{j}\longrightarrow Y_{r-1}$.
	Note that $\mathcal{L}_{0}$ is the trivial line bundle on $Y_{r-1}$.\\
	Let $\mathcal{E}:=\mathcal{L}_{0}^{\oplus q}\oplus(\bigoplus_{j=1}^{r-1} \mathcal{L}_{j}^{\oplus q-1})$. Then $\mathcal{E}$ is a vector bundle of rank $r(q-1)+1$ on $Y_{r-1}$. Let $\pi:\mathcal{E}\longrightarrow Y_{r-1}$ denote the associated morphism. \\
	For any $J_{1},J_{2}\in\Pi_{j=1}^{r-1}C_{j}$, the transition functions for $\mathcal{E}$ is given by $g_{J_{1},J_{2}}:U(J_{1})\cap U(J_{2})\longrightarrow GL(r(q-1)+1,\mathbb{C})$, where for all  $z\in U(J_{1})\cap U(J_{2})$,
	$g_{J_{1},J_{2}}(z)$ is the block diagonal matrix  
	\begin{align}\label{eq 6.1.10}
		g_{J_{1},J_{2}}(z)&=
\begin{pmatrix}
	A_1 & 0   & \cdots & 0 \\
	0   & A_2 & \cdots & 0 \\
	\vdots & \vdots & \ddots & \vdots \\
	0   & 0   & \cdots & A_r
\end{pmatrix}\end{align}
	where \begin{align}
		A_{1}&=diag(\underbrace{b_{J_{1},J_{2}}(0)(z),\ldots,b_{J_{1},J_{2}}(0)(z)}_{\text{$q$ times}})\label{eq 6.1.11}\\
		\text{\ for $2\leq j\leq r$}, A_{j}&=diag(\underbrace{b_{J_{1},J_{2}}(j-1)(z),\ldots,b_{J_{1},J_{2}}(j-1)(z)}_{\text{$q-1$ times}})\label{eq 6.1.12}
	\end{align}

	Recall the definition of $\mathbb{P}(\mathcal{E})$ from \cref{definition:projective bundle},
	\begin{align}\label{eq:definiton P(E)}
		\mathbb{P}(\mathcal{E}):=\frac{\bigsqcup_{J\in \Pi_{j=1}^{r-1}C_{j}}(U(J)\times\mathbb{P}^{r(q-1)})}{\sim},
	\end{align} 
	where $(z_{1},[v])\sim(z_{2},[w])$ if and only if $z_{1}=z_{2}$, $[g_{J_{2},J_{1}}(z_{1})(v)]=[w]$ for $z_{1}\in U(J_{1})$, $z_{2}\in U(J_{2})$ and $[v],[w]\in\mathbb{P}^{r(q-1)}$. Since $b_{J_{1},J_{2}}(j)$ satisfies cocycle conditios for all $0\leq j\leq r-1$, $g_{J_{1},J_{2}}$ also satisfies cocycle conditions.\\ We denote the equivalence class of $(z,[v])$ by $[(z,[v])]$. We have a morphism $\bar{\pi}:\mathbb{P}(\mathcal{E})\longrightarrow Y_{r-1}$ given by $\bar{\pi}([(z,[v])])=z$.
	
	Let $\{e_{k,r}:k\in C_{r}\}$ be a basis of $\mathbb{C}^{r(q-1)+1}$.
	
	\begin{theorem}\label{section:projective bundle:th 1} We have an isomorphism 
		$\Psi:Y_{r}\longrightarrow\mathbb{P}(\mathcal{E})$ such that the following diagram commutes
	
		\begin{equation}\label{eq:diagram 2}
				 \begin{tikzcd}
			Y_{r}\arrow{rr}{\Psi} \arrow[swap]{dr}{\phi_{r}} & & \mathbb{P}(\mathcal{E})\arrow{dl}{\bar{\pi}} \\%
			&	Y_{r-1}
		\end{tikzcd}
	\end{equation}
	\end{theorem}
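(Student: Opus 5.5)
The plan is to glue the local trivializations $h_{J}$ of \cref{section:quotient:lemma 11 } into a global map. For each $J\in\Pi_{j=1}^{r-1}C_{j}$, let $\iota_{J}:U(J)\times\mathbb{P}^{r(q-1)}\to\mathbb{P}(\mathcal{E})$ be the canonical map to the quotient in \cref{eq:definiton P(E)}, so $\iota_J(z,[v])=[(z,[v])]$. It is an open immersion onto $\bar{\pi}^{-1}(U(J))$. Since $\phi_{r}^{-1}(U(J))=V(J)$ by \cref{section:quotient:lemma 13}, and the $V(J)$ cover $Y_{r}$, we define
\[
\Psi_{J}:=\iota_{J}\circ h_{J}:V(J)\longrightarrow\mathbb{P}(\mathcal{E}).
\]
The main task is to show that the $\Psi_{J}$ agree on overlaps $V(J_{1})\cap V(J_{2})=\phi_{r}^{-1}(U(J_{1})\cap U(J_{2}))$, so that they glue to a morphism $\Psi$.

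For compatibility, take $z\in V(J_{1})\cap V(J_{2})$ and write $h_{J_{2}}(z)=(w,[v])$ with $w=\phi_{r}(z)$. By \cref{section:quotient:lemma 12}, $h_{J_{1}}(z)=h_{J_{1}}\circ h_{J_{2}}^{-1}(w,[v])=(w,[v'])$. Here $v'$ is obtained from $v$ by multiplying the coordinates indexed by $J_{p,r}$ by $b_{J_{1},J_{2}}(p-1)(w)$. Order the basis $\{e_{k,r}\}$ block by block as $J_{1,r},J_{2,r},\dots,J_{r,r}$; the block sizes are $q,q-1,\dots,q-1$ by \cref{remark 1}. In that ordering this multiplication is exactly $[g_{J_{1},J_{2}}(w)v]$ with $g_{J_{1},J_{2}}$ as in \cref{eq 6.1.10}--\cref{eq 6.1.12}.

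The gluing relation in \cref{eq:definiton P(E)} identifies $(w,[v])$ in the $J_{2}$-chart with $(w,[g_{J_{1},J_{2}}(w)v])$ in the $J_{1}$-chart. Hence $\iota_{J_{2}}h_{J_{2}}(z)=\iota_{J_{1}}h_{J_{1}}(z)$. The one point needing care is the index convention: \cref{definition:projective bundle} uses $g_{ji}$ to pass from chart $i$ to chart $j$, so I must check that the direction matches \cref{section:quotient:lemma 12}. If the indices come out reversed, I will replace $b_{J_1,J_2}$ by its inverse $b_{J_{2},J_{1}}$ using \cref{section:projective bundle:lemma 1}, which amounts to taking the dual line bundles. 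This bookkeeping is where I expect the only real obstacle.

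Once the maps glue, commutativity of \cref{eq:diagram 2} holds chartwise: $\bar{\pi}\circ\iota_{J}=pr_{1}$, and $pr_{1}\circ h_{J}=\phi_{r}$ by \cref{diagram 2}. For the isomorphism, define the inverse locally by $h_{J}^{-1}\circ\iota_{J}^{-1}$ on $\bar{\pi}^{-1}(U(J))$. The same overlap computation, run backwards, shows these local inverses agree on overlaps, so they glue to a morphism $\Psi^{-1}$. Then $\Psi\circ\Psi^{-1}$ and $\Psi^{-1}\circ\Psi$ are the identity on each chart, hence globally. Alternatively, $\Psi$ restricts to an isomorphism $\phi_r^{-1}(U(J))\to\bar\pi^{-1}(U(J))$ over every member of an open cover, so it is an isomorphism.
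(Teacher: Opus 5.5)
Your proposal is correct. The gluing step is the same as the paper's, but your proof that $\Psi$ is an isomorphism takes a different and more direct route.

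\textbf{Gluing.} The paper also sets $\Psi_J(y)=[h_J(y)]$ and checks agreement on $V(J_1)\cap V(J_2)$. It recomputes the overlap identity from the defining formula for $\tilde h_J$ rather than citing \cref{section:quotient:lemma 12}, but the computation is the same. The index bookkeeping you flag works out with no dualization. The gluing relation defining $\mathbb{P}(\mathcal{E})$ sends a point $(w,[v])$ of the $J_2$-chart to $(w,[g_{J_1,J_2}(w)v])$ in the $J_1$-chart. That is exactly what \cref{section:quotient:lemma 12} gives for $h_{J_1}\circ h_{J_2}^{-1}$, with blocks of sizes $q,q-1,\dots,q-1$ ordered as $J_{1,r},\dots,J_{r,r}$. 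Passing to $b_{J_2,J_1}$ would have proved a statement about $\mathbb{P}(\mathcal{E}^{\vee})$, not the stated one, so it is good that it is not needed.

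\textbf{Isomorphism.} The paper argues as follows:
\begin{enumerate}
\item injectivity by a separate chart computation;
\item surjectivity from that of $h_J$;
\item birationality;
\item normality of $\mathbb{P}(\mathcal{E})$, inherited from $Y_{r-1}$, together with Zariski's Main Theorem.
\end{enumerate}
You instead use commutativity of the triangle and \cref{section:quotient:lemma 13} to get $\Psi^{-1}(\bar\pi^{-1}(U(J)))=\phi_r^{-1}(U(J))=V(J)$. On this set $\Psi=\iota_J\circ h_J$ is an isomorphism onto the open set $\bar\pi^{-1}(U(J))$. Since these opens cover $\mathbb{P}(\mathcal{E})$, $\Psi$ is an isomorphism.

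\textbf{Comparison.} Your route is shorter and needs neither the injectivity computation nor normality nor Zariski's Main Theorem. Once each $h_J$ is known to be an isomorphism (the paper already used Zariski's Main Theorem there), being an isomorphism is local on the target, so the paper's second appeal to Zariski's Main Theorem is redundant.
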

	\begin{proof}
		We have $Y_{r}=\displaystyle\cup_{J\in\Pi_{j=1}^{r-1}C_{j}}V(J)$. Define $\Psi_{J}:V(J)\longrightarrow\mathbb{P}(\mathcal{E})$ by $\Psi_{J}(y):=[h_{_{J}}(y)]$ for all $y\in V(J)$. Here $[h_{J}(y)]$ is the equivalence class of $h_{J}(y)$ under the equivalence relation $\sim$ in \cref{eq:definiton P(E)}.\\
		{\it Claim}: Let $J_{1},J_{2}\in \Pi_{j=1}^{r-1}C_{j}$. Then for any $y\in V(J_{1})\cap V(J_{2})$, we have $\Psi_{J_{1}}(y)=\Psi_{J_{2}}(y)$.\\
		{\it Proof of claim}: Let $y\in V(J_{1})\cap V(J_{2})$. From \cref{section:quotient:lemma 14}, we have $y=\pi_{r}(x)$ for some $x=(\prod_{\substack{1\leq j\leq r\\i\in C_{j}}}u_{\beta_{i,j}}(a_{i,j}))w_{r}P/P\in \tilde{V}(J_{1})\cap \tilde{V}(J_{2})$. Since $h_{J}=\tilde{h}_{J}\circ\pi_{r}$, using \cref{eq 5.1.26}, we have
		$h_{J_{1}}(y)=(\phi_{r}(y),[v])$ and $h_{J_{2}}(y)=(\phi_{r}(y),[w])$, where 
		\begin{align}\label{eq 6.1.8}
			[v]=[\sum_{p=1}^{r}(\sum_{(k,r)\in J_{p,r}}b(J_{1}(p-1))(x[r-1])a_{k,r}e_{k,r})]
		\end{align}
		\begin{align}\label{eq 6.1.7}
			[w]=[\sum_{p=1}^{r}(\sum_{(k,r)\in J_{p,r}}b(J_{2}(p-1))(x[r-1])a_{k,r}e_{k,r})]
		\end{align}
		From \cref{eq 6.1.7}, we have
		\begin{align}\label{eq 6.1.13}
			[w]&=[\sum_{p=1}^{r}(\sum_{(k,r)\in J_{p,r}}\frac{b(J_{2}(p-1))(x[r-1])}{b(J_{1}(p-1))(x[r-1])}b(J_{1}(p-1))(x[r-1])a_{k,r}e_{k,r})]
		\end{align}
Now using \cref{eq 5.1.15} and \cref{eq 5.1.16}, we have $\frac{b(J_{2}(p-1))(x[r-1])}{b(J_{1}(p-1))(x[r-1])}=b_{J_{2},J_{1}}(p-1)(\pi_{r-1}(x[r-1]))$.
Therefore, from \cref{eq 6.1.13} we have \begin{align}
			[w]&=[\sum_{p=1}^{r}(\sum_{(k,r)\in J_{p,r}}b_{J_{2},J_{1}}(p-1)(\pi_{r-1}(x[r-1]))b(J_{1}(p-1))(x[r-1])a_{k,r}e_{k,r})]\label{eq 6.1.9}	
		\end{align}
		Since  $\phi_{r}(y)=\pi_{r-1}(x[r-1])$, from \cref{eq 6.1.9}, we have
		\begin{align}
			[w]&=[\sum_{p=1}^{r}(\sum_{(k,r)\in J_{p,r}}b_{J_{2},J_{1}}(p-1)(\phi_{r}(y))b(J_{1}(p-1))(x[r-1])a_{k,r}e_{k,r})]\label{eq:section 6:1.4}\end{align}
	On the other hand, from \cref{eq 6.1.10} and \cref{eq 6.1.8} , we have 
		\begin{align}
			[g_{J_{2},J_{1}}(\phi_{r}(y))(v)]&=[\sum_{p=1}^{r}(\sum_{(k,r)\in J_{p,r}}b_{J_{2},J_{1}}(p-1)(\phi_{r}(y))b(J_{1}(p-1))(x[r-1])a_{k,r}e_{k,r})]\label{eq:section 6:1.5}
		\end{align}	
Hence, from \cref{eq:section 6:1.4} and \cref{eq:section 6:1.5}, we have $[w]=[g_{J_{2},J_{1}}(\phi_{r}(y))(v)]$.
		Therefore, we have $[h_{J_{1}}(y)]=[h_{J_{2}}(y)]$. Hence $\Psi_{J_{1}}(y)=\Psi_{J_{2}}(y)$. Thus the morphisms $\Psi_{J}$'s glue together to give a well defined morphism $\Psi:Y_{r}\longrightarrow\mathbb{P}(\mathcal{E})$. 
		
		Further, for $y\in V(J)$, $\bar{\pi}(\Psi(y))=\bar{\pi}([h_{J}(y)])=\phi_{r}(y)$. Hence the diagram in \cref{eq:diagram 2} is commutative. 
		
		{\it $\Psi$ is injective}: Let $y_{1},y_{2}\in Y_{r}$ and $\Psi(y_{1})=\Psi(y_{2})$. Let $y_{1}\in V(J_{1})$ and $y_{2}\in V(J_{2})$. Let $y_{1}=\pi_{r}(x_{1})$ and $y_{2}=\pi_{r}(x_{2})$, for some $x_{1}\in \tilde{V}(J_{1})$, $x_{2}\in \tilde{V}(J_{2})$ (see \cref{eq 5.1.44}). Let
		\begin{align*}
			x_{1}&=u_{1}w_{r}P/P, u_{1}=\prod_{\substack{1\leq j\leq r\\i\in C_{j}}}u_{\beta_{i,j}}(a_{i,j}), a_{i,j}\in\mathbb{C}\\
			x_{2}&=u_{2}w_{r}P/P, u_{2}=\prod_{\substack{1\leq j\leq r\\i\in C_{j}}}u_{\beta_{i,j}}(b_{i,j}), b_{i,j}\in\mathbb{C}
		\end{align*}
		Then we have \begin{align}
			h_{J_{1}}(y_{1})&=(\phi_{r}(y_{1}),[v])\label{eq 6.1.4}\\
			h_{J_{2}}(y_{2})&=(\phi_{r}(y_{2}),[w])\label{eq 6.1.5}
		\end{align}
		where 
		\begin{align}
			[v]&=[\sum_{p=1}^{r}(\sum_{\substack{(k,r)\in J_{p,r}}}b(J_{1}(p-1))(x_{1}[r-1])a_{k,r}e_{k,r})]\label{eq:section 6:1.1}\\
			[w]&=[\sum_{p=1}^{r}(\sum_{\substack{(k,r)\in J_{p,r}}}b(J_{2}(p-1))(x_{2}[r-1])b_{k,r}e_{k,r})]\label{eq:section 6:1.2}
		\end{align}
		Sicne $\Psi(y_{1})=\Psi(y_{2})$, we have $[h_{J_{1}}(y_{1})]=[h_{J_{2}}(y_{2})]$. Hence, we have 
		\begin{align}\label{eq 6.1.1}
			\phi_{r}(y_{1})=\phi_{r}(y_{2})
		\end{align}
		\begin{align}\label{eq 6.1.2}
			[w]=[g_{J_{2},J_{1}}(\phi_{r}(y_{1}))(v)]
		\end{align}
		Using the definition of $g_{J_{2},J_{1}}(\phi_{r}(y_{1}))$ in \cref{eq 6.1.10} (also see \cref{eq 6.1.11} and \cref{eq 6.1.12}) together with the description of $[v]$ in \cref{eq:section 6:1.1}, we have 	
		\begin{align}\label{eq:section 6:1.3}
			[g_{J_{2},J_{1}}(\phi_{r}(y_{1}))(v)]&=[\sum_{p=1}^{r}(\sum_{\substack{(k,r)\in J_{p,r}}}\frac{b(J_{2}(p-1))(x_{1}[r-1])}{b(J_{1}(p-1))(x_{1}[r-1])}b(J_{1}(p-1))(x_{1}[r-1])a_{k,r}e_{k,r})]\end{align}
	Therefore, from \cref{eq 6.1.2} and \cref{eq:section 6:1.3} we have 
		\begin{align}
			[w]=[\sum_{p=1}^{r}(\sum_{\substack{(k,r)\in J_{p,r}}}b(J_{2}(p-1))(x_{1}[r-1])a_{k,r}e_{k,r})]\label{eq 6.1.3}
		\end{align}
		Since $\phi_{r}(y_{1})=\phi_{r}(y_{2})\in U(J_{2})$, from \cref{section:quotient:lemma 13}, we have $y_{1},y_{2}\in V(J_{2})$. Since $y_{1}=\pi_{r}(x_{1})$ and $h_{J_{2}}\circ\pi_{r}=\tilde{h}_{J_{2}}$, we have  $h_{J_{2}}(y_{1})=\tilde{h}_{J_{2}}(x_{1})$. Therefore, from \cref{eq 5.1.26} we have
		\begin{align}
			h_{J_{2}}(y_{1})&=(\phi_{r}(y_{1}),[\sum_{p=1}^{r}(\sum_{\substack{(k,r)\in J_{p,r}}}b(J_{2}(p-1))(x_{1}[r-1])a_{k,r}e_{k,r})])\\
			&=(\phi_{r}(y_{1}),[w]) (\text{\ using \cref{eq 6.1.3}\ })\label{eq 6.1.6}
		\end{align}
		Using \cref{eq 6.1.1}, from \cref{eq 6.1.5} and \cref{eq 6.1.6}, we have
		$h_{J_{2}}(y_{1})=h_{J_{2}}(y_{2})$. Since $h_{J_{2}}$ is injective, we have $y_{1}=y_{2}$.\\
		Since $h_{J}$ is surjective, $\Psi_{J}$ is surjective. Hence $\Psi$ is surjective. Therefore, $\Psi$ is bijective. It is easy to see that $\Psi(V(J))$ is isomorphic to the open subset $\bar{\pi}(U(J)\times \mathbb{P}^{r(q-1)})$. Hence $\Psi$ is birational. Since $Y_{r-1}$ is normal, $\mathbb{P}(\mathcal{E})$ is normal. Now using Zaraski's main theorem, we conclude that $\Psi$ is an isomorphism(see \cite[Theorem 5.2.8]{springer}).
	\end{proof}
\subsection{$Y_{r}$ is stage $r$ of a generalized Bott tower} \label{section 6.1}Let $r,q\in\mathbb{N}$ be such that $r\ge2$ and $q\ge 2$.
	For $1\leq k\leq r$, let $n_{k}=kq+1$ and $G_{k}=PSL(n_{k},\mathbb{C})$. Let $T_{k}$ be a maximal torus of $G_{k}$ and  $B_{k}$ be a Borel subgroup of $G_{k}$ containing $T_{k}$. Let $S_{k}:=\{\alpha_{1(k)},\alpha_{2(k)},\cdots,\alpha_{n_{k}-1(k)}\}$ denote the set of simple roots of $G_{k}$ with respect to $(B_{k},T_{k})$. Let $\{\omega_{1(k)},\omega_{2(k)},\cdots,\omega_{n_{k}-1(k)}\}$ denote the fundamental weights associated to $S_{k}$. 
	
	Let $\{\lambda_{1(k)},\lambda_{2(k)},\cdots,\lambda_{n_{k}-1(k)}\}$ denote the fundamental one parameter subgroups of $T_{k}$ dual to $S_{k}$. Let $Q_{k}$ denote the maximal parabolic subgroup of $G_{k}$ corresponding to the simple root $\alpha_{k(k)}$. Let $W_{k}:=N_{G_{k}}(T_{k})/T_{k}$ denote the Weyl group of $G_{k}$ with respect to $T_{k}$. For a fixed  $1\leq k\leq r$, let $s_{\alpha_{i(k)}}$ denote the simple reflection in $W_{k}$ corresponding to simple root $\alpha_{i(k)}$ for $1\leq i\leq n_{k}-1$. Let $X(w_{k,n_{k}})\subseteq G_{k}/Q_{k}$ denote the  unique minimal dimensional Schubert variety admitting semistable points for $T_{k}$-linearized line bundle $\mathcal{L}(n_{k}\omega_{k(k)})$. From \cite[Lemma 2.7]{sskannan}, for $1\leq k\leq r$, a reduced expression for $w_{k,n_{k}}$ is given by 
	\begin{align*}
		w_{k,n_{k}}=(s_{\alpha_{q(k)}}s_{\alpha_{q-1(k)}}\cdots s_{\alpha_{1(k)}})(s_{\alpha_{2q(k)}}s_{\alpha_{2q-1(k)}}\cdots s_{\alpha_{2(k)}})\cdots(s_{\alpha_{kq(k)}}s_{\alpha_{kq-1(k)}}\cdots s_{\alpha_{k(k)}}).
	\end{align*}
	Let $T_{J_{k}}$ denote the subgroup of $T_{k}$ generated by $\lambda_{jq(k)}(\mathbb{G}_{m})$ ($1\leq j\leq k$). For $1\leq k\leq r$, let $Y_{k}:=\GmodX{T_{J_{k}}}{X(w_{k,n_{k}})}{\mathcal{L}(n_{k}\omega_{k(k)})}$.
	
	From \cref{section 5.1}, for each $2\leq k\leq r$, we have a morphism  $\phi_{k}: Y_{k}\longrightarrow Y_{k-1}$. Let $Y_{1}\xrightarrow{\text{$\phi_{1}$}}Y_{0}=\{pt\}$ denote the constant morphism.

	\begin{corollary}\label{section:projective bundle:cor 1} $Y_{r}\xrightarrow{\text{$\phi_{r}$}}Y_{r-1}\xrightarrow{\text{$\phi_{r-1}$}}\cdots Y_{2}\xrightarrow{\text{$\phi_{2}$}}Y_{1}\xrightarrow{\text{$\phi_{1}$}}Y_{0}=\{pt\}$
		is  a generalized Bott tower with stage $r$.
	\end{corollary}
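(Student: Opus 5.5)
The plan is to verify the definition of a generalized Bott tower from \cref{subsection 2.2} stage by stage. For each $1\leq k\leq r$, I need $Y_{k}\cong\mathbb{P}(\mathcal{E}_{k})$ over $Y_{k-1}$, where $\mathcal{E}_{k}$ is a direct sum of line bundles on $Y_{k-1}$ containing a trivial summand. The stages $k\geq 2$ and the first stage $k=1$ will be handled separately.

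\emph{Stages $k\geq 2$.} The whole of \cref{section 5} and \cref{section:projective bundle:th 1} was carried out for an arbitrary $r\geq 2$ (with $q\geq 2$). Replacing $r$ by $k$ and using the groups $G_{k}$, $G_{k-1}$, the tori $T_{J_{k}}$, $T_{J_{k-1}}$ and the notation of \cref{section 6.1}, the morphism $\phi_{k}\colon Y_{k}\to Y_{k-1}$ is the one built in \cref{section 5.1}. \cref{section:projective bundle:th 1} then gives an isomorphism $\Psi_{k}\colon Y_{k}\to\mathbb{P}(\mathcal{E}_{k})$ over $Y_{k-1}$, with $\mathcal{E}_{k}=\mathcal{L}_{0}^{\oplus q}\oplus\bigoplus_{j=1}^{k-1}\mathcal{L}_{j}^{\oplus q-1}$. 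Here $\mathcal{L}_{0}$ is trivial, because its transition functions $b_{J_{1},J_{2}}(0)$ are identically $1$ by \cref{eq 5.1.14}. Writing $\mathcal{E}_{k}$ as a list of $k(q-1)+1$ line summands with one copy of $\mathcal{L}_{0}$ placed first gives exactly the shape $\mathcal{L}_{k,0}\oplus\cdots\oplus\mathcal{L}_{k,n_{k}}$ with $\mathcal{L}_{k,0}$ trivial. The number of summands is $n_{k}+1=k(q-1)+1$, matching the fibre $\mathbb{P}^{k(q-1)}$. I will also note that the smoothness of $Y_{k-1}$ from \cref{semistability=stability}, which the earlier arguments used, holds at every level.

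\emph{Stage $k=1$.} Here $n_{1}=q+1$, $w_{1,q+1}=s_{q}\cdots s_{1}$, and $T_{J_{1}}=\lambda_{q}(\mathbb{G}_{m})$. By \cref{semistability criterion}, the semistable locus in $C(w_{1,q+1})\cong\mathbb{C}^{q}$ (coordinates $a_{i,1}$ for $i\in C_{1}=\{1,\dots,q\}$) is the complement of the origin. Each $\beta_{i,1}$ pairs to $1$ with $\lambda_{q}$ (\cref{claim 1}), so $T_{J_{1}}$ acts on these coordinates by scalars. The geometric quotient is therefore $(\mathbb{C}^{q}\setminus\{0\})/\mathbb{G}_{m}\cong\mathbb{P}^{q-1}=\mathbb{P}(\mathcal{O}^{\oplus q})$ over the point, and $\mathcal{O}^{\oplus q}$ has a trivial summand. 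To make this rigorous, I will write down the explicit map $[a_{1,1}:\cdots:a_{q,1}]$, show it is bijective, and invoke Zariski's main theorem as in the proof of \cref{section:quotient:lemma 11 }. Alternatively, this is the same construction as \cref{section:quotient:lemma 11 } with $Y_{0}$ a point and $J$ empty.

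The only genuine obstacle is confirming that nothing in \cref{section 5} silently depended on the specific top level $r$, so that it really applies for every $k$, including $k=2$ where $Y_{1}$ is the base. I will check that the earlier lemmas only use $n=rq+1$ and $q\geq 2$, both of which hold for each $k$. With that checked, the statement follows by assembling the stages.
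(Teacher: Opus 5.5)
Your proposal is correct and follows the paper's proof. For $2\le k\le r$ you apply \cref{section:projective bundle:th 1} with $r$ replaced by $k$ and use that $\mathcal{L}_{0}$ is trivial, exactly as the paper does; the only difference is at stage $1$, where the paper obtains $Y_{1}\cong\mathbb{P}^{q-1}$ by quoting \cite[Lemma 5.8, Theorem 7.1]{GKgit}, while you derive it directly from \cref{semistability criterion} with $r=1$ (semistable locus $\mathbb{C}^{q}\setminus\{0\}$, with $T_{J_{1}}$ acting by scalars), which is equally valid and self-contained.
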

	\begin{proof} 
		We have $w_{1,n_{1}}=s_{\alpha_{q(1)}}s_{\alpha_{q-1(1)}}\cdots s_{\alpha_{1(1)}}$. Thus from \cite[Lemma 5.8]{GKgit}, $X(w_{1,n_{1}})$ is the unique minimal dimensional Schubert variety in $G_{1}/Q_{1}$ admitting semistable points for the $\lambda_{q(1)}(\mathbb{G}_{m})$-linearized line bundle $\mathcal{L}(n_{1}\omega_{1(1)})$. Therefore, from \cite[Theorem 7.1]{GKgit}, we have $Y_{1}$ is isomorphic to $\mathbb{P}^{q-1}$.\\		
		Fix $2\leq k\leq r$. Then using \cref{section:projective bundle:th 1},  we have $Y_{k}$ is isomorphic to the projective space bundle $\mathbb{P}(\mathcal{E}_{k})$ over $Y_{k-1}$, where $\mathcal{E}_{k}=\mathcal{L}_{k,0}^{\oplus q}\oplus(\bigoplus_{j=1}^{k-1}\mathcal{L}^{\oplus q-1}_{k,j})$ and for all $0\leq j\leq k-1$, $\mathcal{L}_{k,j}$ is a line bundle on $Y_{k-1}$. 
		Now the proof follows from the definition of generalized Bott tower in \cref{subsection 2.2}.
	\end{proof}
\section{Acknowledgement}We thank Infosys foundation for partial financial support. The first named author would like to thank
the National Board for Higher Mathematics (NBHM), Department of
Atomic Energy, Government of India (Ref.No.0203/20/2022-R\&D-II/13472) for financial support.

		 \end{document}